\newtheorem{theorem}{Theorem}[section]
\newtheorem{lem}[theorem]{Lemma}
\newtheorem{kor}[theorem]{Corollary}
\newtheorem{prop}[theorem]{Proposition}
\newtheorem{rem}[theorem]{Remark}
\theoremstyle{definition}%to have non-italized text in defs to set apart from thms
\newtheorem{dfn}[theorem]{Definition}
\DeclareMathOperator*{\divv}{div}
\DeclareMathOperator*{\curl}{curl}
\DeclareMathOperator*{\Id}{Id}
\DeclareMathOperator*{\loc}{loc}
\newcommand*{\Ascr}{\mathcal A}
\newcommand*{\Bscr}{\mathcal B}
\newcommand*{\Fscr}{\mathcal F}
\newcommand*{\Hscr}{\mathcal H}
\newcommand*{\Lscr}{\mathcal L}
\newcommand*{\Pscr}{\mathcal P}
\newcommand*{\Rscr}{\mathcal R}
\newcommand*{\N}{\mathbb{N}}
\newcommand*{\E}{\mathbb{E}}
\newcommand*{\R}{\mathbb{R}}
\newcommand*{\vrho}{\varrho}
\definecolor{orange}{rgb}{1.0, 0.55, 0.0} % further colors added
\newcommand{\footremember}[2]{%
	\footnote{#2}
	\newcounter{#1}
	\setcounter{#1}{\value{footnote}}%
}
\title{On nonlinear Markov processes in the sense of McKean}
\author{%
	Marco Rehmeier\footremember{alley}{Faculty of Mathematics, Bielefeld University, 33615 Bielefeld, Germany. E-mail: mrehmeier@math.uni-bielefeld.de (corresponding author)}%
	\and Michael Röckner\footnote{Faculty of Mathematics, Bielefeld University, 33615 Bielefeld, Germany. E-mail: roeckner@math.uni-bielefeld.de} \footnote{Academy of Mathematics and System Sciences, CAS, Beijing}%
}
\date{\today}
\begin{document}
\maketitle
%\author{Marco Rehmeier\thanks{Faculty of Mathematics, Bielefeld University, 33615 Bielefeld, Germany. E-Mail: mrehmeier@math.uni-bielefeld.de }, Michael Röckner\thanks{Faculty of Mathematics, Bielefeld University, 33615 Bielefeld, Germany. E-Mail: roeckner@math.uni-bielefeld.de }}

\begin{abstract}
We study nonlinear time-inhomogeneous Markov processes in the sense of McKean's seminal work \cite{McKean1-classical}. These are given as families of laws $\mathbb{P}_{s,\zeta}$, $s\geq 0$, on path space, where $\zeta$ runs through a set of admissible initial probability measures on $\R^d$. In this paper, we concentrate on the case where every $\mathbb{P}_{s,\zeta}$ is given as the path law of a solution to a McKean--Vlasov SDE, where the latter is allowed to have merely measurable coefficients, which in particular are not necessarily weakly continuous in the measure variable. Our main result is the identification of general and checkable conditions on such general McKean--Vlasov SDEs, which imply that the path laws of their solutions form a nonlinear Markov process. Our notion of nonlinear Markov property is in McKean's spirit, but
more general in order to include processes whose one-dimensional time marginal densities solve a nonlinear parabolic PDE, more precisely, a nonlinear Fokker--Planck--Kolmogorov equation, such as Burgers' equation, the porous media equation and variants thereof with transport-type drift, and also the very recently studied $2D$ vorticity Navier--Stokes equation and the $p$-Laplace equation. In all these cases, the associated McKean--Vlasov SDEs are such that both their diffusion and drift coefficients singularly depend (i.e. Nemytskii-type) on the one-dimensional time marginals of their solutions. We stress that for our main result the nonlinear Fokker--Planck--Kolmogorov equations do not have to be well-posed. Thus, we establish a one-to-one correspondence between solution flows of a large class of nonlinear parabolic PDEs and nonlinear Markov processes.
\end{abstract}
\noindent	\textbf{Keywords:} Nonlinear Fokker--Planck--Kolmogorov equation, distribution-dependent stochastic differential equation, McKean-Vlasov SDE, nonlinear Markov process, Barenblatt solution, porous media equation, superposition principle, probabilistic representation, solution flow, flow selection, extremality \\
\textbf{2020 MSC:} Primary 60H30, 35Q84, 35K55, 60J25; Secondary 60J45, 60J60

\section{Introduction}

In this work we study nonlinear Markov processes in the sense of McKean's seminal work \cite{McKean1-classical}. We suggest a modified definition of such processes (see Definition \ref{def:NL-Markov-process} below), derive sufficient criteria for their construction, and present as well as analyze several classes of examples. These include nonlinear Markov processes associated with the classical porous media equation in all spatial dimensions $d \geq 1$, with their Barenblatt solutions as one-dimensional time marginal densities, and variants thereof with more general diffusivity functions and additional drift of transport type.

\textbf{1.1 Linear situation.}
Let us briefly recall how the theory of classical "linear" Markov processes is linked to stochastic differential equations (SDEs) and linear parabolic Fokker--Planck--Kolmogorov equations (FPKEs).
If
\begin{equation*}
	dX_t = b(t,X_t)dt+\sigma(t,X_t)dB_t, \quad t \geq s,\quad X_s = x,
\end{equation*}
where $B$ is a standard $d$-dimensional Brownian motion, has a unique probabilistic weak solution $X^{s,x}$ for each $(s,x)\in \R_+\times \R^d$, then the corresponding path laws $\mathbb{P}_{s,x}$ form a classical Markov process (see, e.g., \cite{D65} or\cite{S88}), i.e. denoting the natural projections on the space of continuous paths $C([s,\infty),\R^d)$ by $\pi^s_t$, $\pi^s_t(w):= w(t)$, the Markov property
\begin{equation}\label{intro:Markov-prop}\tag{$\ell$MP}
	\mathbb{P}_{s,\zeta}(\pi^s_t \in A| \sigma(\pi^s_\tau, s\leq \tau \leq r)) = \mathbb{P}_{r,\pi^s_r}(\pi^r_t \in A) \quad \mathbb{P}_{s,\zeta}-\text{a.s.} \text{ for all }A \in \Bscr(\R^d)
\end{equation}
holds for all $\zeta \in \Pscr$ (the space of Borel probability measures on $\R^d$) and $0\leq s\leq r \leq t$, where $\mathbb{P}_{s,\zeta}:= \int_{\R^d} \mathbb{P}_{s,x}\,\zeta(dx)$.
By Itô's formula, the one-dimensional time marginals, $\mu^{s,\zeta}_t$, $t \geq s$, of the latter solve the linear FPKE
\begin{equation}\label{intro:lFPE}
	\partial_t \mu_t = \partial^2_{ij}(a_{ij}(t,x)\mu_t)-\partial_i(b_i(t,x)\mu_t),\quad t\geq s, \quad \mu_s = \zeta
\end{equation}
(in the Schwartz distributional sense), where $a = (a_{ij})_{i,j \leq d} = \frac 1 2 \sigma \sigma^T$ and we use Einstein summation convention. In particular, $(\mu^{s,\zeta}_t)_{t \geq s}$ is a continuous curve in $\Pscr$ with the weak topology. Vice versa, if this linear FPKE has a unique weakly continuous probability measure-valued solution for all probability initial conditions
and if the coefficients satisfy a mild integrability condition with respect to these solutions, then by the Ambrosio-Figalli-Trevisan superposition principle \cite{Ambrosio08,Figalli09,Trevisan16} (see also \cite{BRS19-SPpr}) and \cite[Thm.6.2.3]{StroockVaradh2007}, the above SDE is weakly well-posed and the family of path laws $\mathbb{P}_{s,x}$ of its unique weak solutions with Dirac initial conditions is a classical Markov process. 
For a comparison of this linear and our nonlinear case, see Section \ref{sect:comp-to-lin} below.

\textbf{1.2 McKean's vision.}
McKean seems to have been the first who realized that the core Markovian feature, i.e. that the path law $\mathbb{P}_{s,\zeta}$ restricted to $\sigma(\pi^s_\tau, r\leq \tau)$ and conditioned on $\sigma(\pi^s_\tau, s \leq \tau \leq r)$ is a function of $s,r$ and the current position $\pi^s_r$ only, is more general than the specific formula \eqref{intro:Markov-prop} (see also the formulation in \cite[p.145]{StroockVaradh2007}). In \cite{McKean1-classical}, he suggested a generalization by replacing $\mathbb{P}_{r,\pi^s_r}$ by (a regular conditional probability of) $\mathbb{P}_{r,\mu^{s,\zeta}_r}[\,\cdot\,|\pi^r_r = \pi^s_r]$, where $\mu^{s,\zeta}_r$ denotes the one-dimensional time marginal of $\mathbb{P}_{s,\zeta}$ at $r$.
McKean's envisioned program was to connect this more general Markov property to nonlinear parabolic PDEs, more precisely to nonlinear FPKEs of \textit{Nemytskii-type}, i.e., to equations of the form 
\begin{equation}\label{intro:NLFPKE-Nemytskii}
	\partial_t u = \partial^2_{ij}(a_{ij}(t,u,x)u)-\partial_i(b_i(t,u,x)u), \quad t\geq s,\quad u(t,x)dx \xrightarrow{t \to s} \zeta
\end{equation}
by constructing a family of path measures $\mathbb{P}_{s,\zeta}$ satisfying this more general property, whose one-dimensional time marginals have densities which solve such a PDE and, thereby, to provide a natural and rich probabilistic representation for solutions to \eqref{intro:NLFPKE-Nemytskii}. In a later paper, McKean indicated how to construct such families as path laws of solutions to
distribution-dependent SDEs (DDSDEs, also called \textit{McKean--Vlasov equations of Nemytskii-type}) of the form
\begin{equation}\label{intro:DDSDE-N}
	dX_t = b(t,u(t,X_t),X_t)dt+\sigma(t,u(t,X_t),X_t)dB_t, \,\,\, u(t,x)dx = \mathcal{L}_{X_t}, \,\,\, u(t,x)dx\xrightarrow{t \to s}\zeta
\end{equation}
($\mathcal{L}_{X_t} =$ distribution of $X_t$), see \cite{McKean-classical2}.
In addition to special examples, such as discrete state space models from kinetic gas theory and DDSDEs with unit diffusion and drift $b(t,u(t,x),x) = \int_\R (y-x)u(t,y)dy$, he proposed to study very interesting models of type \eqref{intro:NLFPKE-Nemytskii}, such as Burgers' equation and the porous media equation in dimension $d=1$. Though, to the best of our knowledge, he did not go beyond such suggestions in \cite{McKean1-classical} or subsequent works, he was clearly envisioning a much larger theory.

\textbf{1.3 Goals of our paper.}
The aim of this paper is to first present a theory of nonlinear Markov processes based on McKean's suggestions, including a precise definition of such processes, and discuss some of their basic properties. Second, we shall prove a result which specifies sufficient and checkable conditions to construct from solutions to a general \textit{nonlinear} FPKE
\begin{equation}\label{intro:NLFPE}
	\partial_t\mu_t = \partial^2_{ij}(a_{ij}(t,\mu_t,x)\mu_t)- \partial_i(b_i(t,\mu_t,x)\mu_t)
\end{equation}
(of which \eqref{intro:NLFPKE-Nemytskii} is a special case)
nonlinear Markov processes consisting of solution path laws to the associated DDSDE 
\begin{equation}\label{intro:DDSDE}
	dX_t = b(t,\mathcal{L}_{X_t},X_t)dt+\sigma(t,\mathcal{L}_{X_t},X_t)dB_t
\end{equation}
(of which \eqref{intro:DDSDE-N} is a special case)
with one-dimensional time marginals given by these FPKE-solutions. Third, we shall apply this result to a large class of examples.
Our longterm goal is to develop an as rich theory as in the linear case, where the correspondence between linear (i.e. classical) Markov processes and linear PDE theory has become a very versatile and important tool to solve problems about the linear PDE through its corresponding linear Markov process and vice versa.

Our definition of a nonlinear Markov process is in the spirit of \cite{McKean1-classical}, but more general since we allow a possibly restricted class $\Pscr_0 \subseteq \Pscr$ of initial data, see Definition \ref{def:NL-Markov-process} below. This is of high relevance since then our theory also applies to examples in which \eqref{intro:NLFPKE-Nemytskii} can only be solved for initial measures from a restricted class $\Pscr_0$, which is quite common for such nonlinear equations of Nemytskii-type. 

%We also show that with our definition the path laws $\mathbb{P}_{s,\zeta}$ of nonlinear Markov processes can be explicitly calculated by a family of one-dimensional time marginals, more precisely by the one-dimensional time marginals of the regular conditional probabilities of $\mathbb{P}_{r,\mu^{s,\zeta}_r}[\, \cdot \,|\pi^r_r = y]$, $y \in \R^d$, $s\leq r$, cf. \eqref{eq:rcp-marginals} and Proposition \ref{prop:Markov-distr-from-marginals} below.

Let us now describe our main result, Theorem \ref{theorem1}.  The starting point is a family of weakly continuous probability measure-valued solutions $\{\mu^{s,\zeta}\}_{(s,\zeta) \in \R_+\times \Pscr_0}$, $\mu^{s,\zeta} = (\mu^{s,\zeta}_t)_{t\geq s}$, $\mu^{s,\zeta}_s = \zeta$, to \eqref{intro:NLFPE} and corresponding solution path laws $\{\mathbb{P}_{s,\zeta}\}_{(s,\zeta) \in \R_+\times \Pscr_0}$ to the associated DDSDE \eqref{intro:DDSDE}  such that 
\begin{equation}\label{intro:marginals}
\mu^{s,\zeta}_t = \mathbb{P}_{s,\zeta}\circ (\pi^s_t)^{-1}
\end{equation}
(such solution path laws exist by the aforementioned superposition principle and its nonlinear extension \cite{BR18_2,BR18}).  Here $\Pscr_0 \subseteq \Pscr$ is an arbitrary prescribed set of admissible initial conditions. Our main result identifies a pair of sufficient conditions implying:
$$\text{ 1) each $\mathbb{P}_{s,\zeta}$ is the unique (!) solution path law with \eqref{intro:marginals}},$$
and 
$$\text{2) $\{\mathbb{P}_{s,\zeta}\}_{(s,\zeta) \in \R_+\times \Pscr_0}$ is a nonlinear Markov process in the sense of Definition \ref{def:NL-Markov-process}.}$$
Hence, we establish a one-to-one correspondence between solution flows for \eqref{intro:NLFPE} and nonlinear Markov processes associated with \eqref{intro:DDSDE}.

Contrary to what one might expect, we do not need the nonlinear FPKE to be well-posed. Instead, our first condition is that the prescribed solutions satisfy the \textit{flow property}, i.e.
\begin{equation}\label{intro:flow-prop}
	\mu^{s,\zeta}_t \in \Pscr_0,\quad\mu^{s,\zeta}_t = \mu^{r,\mu^{s,\zeta}_r}_t,\quad \forall\, 0 \leq s \leq r \leq t,\,\zeta \in \Pscr_0.
\end{equation}
Such solution flows may, for instance, be obtained by selection (\cite{Rehmeier-nonlinear-flow-JDE}) or by their construction via nonlinear semigroup methods (see, e.g., \cite{NLFPK-DDSDE5}) (and, of course, \eqref{intro:flow-prop} is always valid in well-posed cases). The map $\Pscr_0 \ni \zeta \mapsto \mu^{s,\zeta}_t$ is in general not linear on its domain, even when $\Pscr_0$ is a convex set, e.g. $\Pscr_0=\Pscr$. Therefore, we call  Markov processes as in Definition \ref{def:NL-Markov-process} below \textit{nonlinear Markov processes}. 
This nonlinearity is one main difference from the theory of linear Markov processes, where $\Pscr_0 = \Pscr$, and where one has the \textit{Chapman--Kolmogorov equations}, i.e.
$$\mu^{s,\delta_x}_t = \int_{\mathbb{R}^d} \mu^{r,\delta_y}_t \,\mu^{s,\delta_x}_r(dy),\quad \forall\, 0\leq s \leq r\leq t, x \in \R^d,$$
which imply \eqref{intro:flow-prop}. 

Our second condition is a geometric one: each $\mu^{s,\zeta}$ should be an extreme point in the convex set of all weakly continuous probability solutions with initial datum $(s,\zeta)$ to the linearized FPKE obtained by fixing a priori the nonlinear solution $\mu^{s,\zeta}$ in the measure variable of the coefficients in \eqref{intro:NLFPE}.
For the connection between extremality and Markov property for martingale problems in the linear case, see the fundamental paper \cite{SY80}.

Our main result, Theorem \ref{theorem1}, states that under these two conditions, assertions 1) and 2) mentioned above hold. 
 Two main ingredients of the proof are Theorem \ref{aux-prop-ex+uniqu} and Lemma \ref{lem:Michael-observation}. Theorem \ref{aux-prop-ex+uniqu} ensures the uniqueness of $\mathbb{P}_{s,\zeta}$, and Lemma \ref{lem:Michael-observation} gives a useful equivalent formulation of the extremality condition in terms of a restricted uniqueness condition for the corresponding linearized FPKEs. This restricted uniqueness condition is checkable in many applications, see Section \ref{sect:Appl}. We point out that Theorem \ref{aux-prop-ex+uniqu}  and Lemma \ref{lem:Michael-observation} are new results themselves.
 
 An important consequence of our proof is Corollary \ref{cor:after-main-thm}, where we prove that it is sufficient to prove the extremality condition only for solutions $\mu^{s,\zeta}$ with $\zeta$ from a strict subset  $\mathfrak{P}_0$ of all admissible initial conditions $\mathcal{P}_0$, if $\mu^{s,\zeta}_t$ for $t>s$ belongs to $\mathfrak{P}_0$ for all $\zeta \in \Pscr_0$. Then, we still prove 2), and 1) is still valid for all $\zeta \in \mathfrak{P}_0$. This extension is crucial to treat cases (as, for instance, the porous media equation) where $\Pscr_0$ contains Dirac measures, since in many cases it turns out that our extremality condition can only be verified for $\zeta$ which are absolutely continuous w.r.t. Lebesgue measure, compare Section \ref{sect:Appl} below.
 
 Finally, we mention that even for the linear special case, i.e. when the underlying FPKE is of type \eqref{intro:lFPE}, our results yield new outcomes as well, see Section \ref{subsect:lin-special case}.

\textbf{1.4 Applications.}
Our two sufficient conditions mentioned above are, of course, in particular satisfied when the nonlinear and the corresponding linearized FPKEs are well-posed in the class of weakly continuous probability solutions, and we briefly present this case at the beginning of Section \ref{sect:Appl}. Usually, well-posedness for the nonlinear equation is obtained by assuming (Lipschitz-)continuity of the coefficients in their measure variable w.r.t. a Wasserstein distance. 

All our main examples, i.e. Section \ref{subsect:examples-nemytskii} (i)-(iv), are not of this type, but of Nemytskii-type, namely generalized porous media equations with drift in all spatial dimensions $d\geq 1$ (in particular including \textit{nonlinear distorted Brownian motion}, see \cite{BR-IndianaJ,RRW20,BR22-invar-pr}), Burgers' equation, equations of porous media type where the Laplacian is replaced by a fractional Laplacian so that the corresponding DDSDEs have Lévy noise (see also \cite{BR22-frac,BRZ23,SPpr-nonloc}), and the classical porous media equation with its Barenblatt solutions. This is the first work in which the nonlinear Markov property for solutions to the DDSDEs associated with each of these nonlinear PDEs is proven.
In these cases the coefficients are only measurable with respect to the Borel $\sigma$-algebra of the weak topology on $\Pscr$ (see the beginning of Section \ref{subsect:examples-nemytskii}).
As we show in Section \ref{subsect:examples-nemytskii}, we are able to apply the theory developed in this work to all these examples. As already mentioned, in particular the Barenblatt solutions to the porous media equation, $d \geq 1$, uniquely determine a nonlinear Markov process with these solutions as one-dimensional time marginals, cf. Section \ref{subsect:examples-nemytskii} (iv). We consider this as a central new application of our main result.

While, in the spirit of McKean's vision, our focus is on nonlinear FPKEs of Nemytskii-type \eqref{intro:NLFPKE-Nemytskii} and the associated DDSDEs \eqref{intro:DDSDE-N}, as already indicated above, our main result and our examples also cover nonlinear Markov processes with general probability measures as one-dimensional time marginals, arising as solution flows to \eqref{intro:NLFPE}.  In this case, the coefficients of the associated DDSDE \eqref{intro:DDSDE} do not depend pointwise on the one-dimensional time marginal density (which in general need not exist), but on the one-dimensional time marginal as a measure.

Finally, we mention the very recent papers \cite{BRZ23} and \cite{BRR24-pLaplace}, in which an earlier arXiv-version of our work was applied to the $2D$ vorticity Navier--Stokes equation and the $p$-Laplace equation, respectively (see (v) and (vi) in Section \ref{subsect:examples-nemytskii} below). This earlier version was already applied in \cite{BR24-lecturenotes}.

\textbf{1.5 Related literature.}
Concerning the related literature, we would like to mention the very elaborate book by Kolokoltsov \cite{Kolo-book10}, in which nonlinear FPKEs with coefficients with Lipschitz continuous dependence in their measure variable (e.g. with respect to a Wasserstein distance) and their relation to DDSDEs are studied. The book contains well-posedness results for nonlinear FPKEs and their associated linearized equations, possibly including an additional jump operator. For such cases, the author associates with these well-posed equations a family of linear Markov processes in the following way: If for each initial datum $\zeta \in \Pscr$ the linearized FPKE, obtained by fixing the unique solution curve $\mu^\zeta$ to the nonlinear FPKE with initial datum $\zeta$ in the measure component of the coefficients, is well-posed, then there is a unique linear Markov process related to each such linearized equation. More precisely, it is given by the path laws of the unique weak solutions to the stochastic equation associated with this linearized FPKE. The author defines this family of linear Markov processes as a nonlinear Markov process. 
As mentioned before, all our main examples in the present work are of Nemytskii-type  \eqref{intro:NLFPKE-Nemytskii},  and hence
fail to satisfy the assumptions in \cite{Kolo-book10}. Our notion of nonlinear Markov processes, however, is sufficiently general to also cover these cases.

This paper was strongly motivated by the results in \cite{RRW20}, where (among other results) it was shown that uniqueness (in a restricted class) of distributional solutions to nonlinear FPKEs and their linearizations implies weak uniqueness (in a restricted class) of the associated DDSDEs, and as a consequence, the nonlinear Markov property of the path laws of the weak solutions to the latter was shown (see \cite[Cor.4.6]{RRW20}). In comparison with this, our Theorem \ref{theorem1}, does not require such a uniqueness  property, but only the existence of a solution flow satisfying an extremality condition. This enhances the realm of applications substantially (see Chapter \ref{sect:Appl} below) and, in particular, covers the so-called \textit{nonlinear distorted Brownian motion} (NLDBM), which was already studied as a main example in \cite{RRW20}, thus identifying NLDBM as a nonlinear Markov process in the sense of our Definition \ref{def:NL-Markov-process} (see Section \ref{subsect:examples-nemytskii} (i)).

We are not aware of any other paper on nonlinear Markov processes realizing McKean's program after \cite{McKean-classical2,McKean1-classical} in general, or specifically in the Nemytskii-case. However, if one restricts oneself to finding a probabilistic representation for the solutions of \eqref{intro:NLFPKE-Nemytskii} as one-dimensional time marginal laws of a stochastic process without proving that it is a nonlinear Markov process, there are several papers on this in the literature, e.g. \cite{BR12,BR17,BRR10,BRR11,IORT20,LeCOR15,CR14}. Here we would like to draw special attention to the pioneering work \cite{BCRV96}, which treats the classical porous media equation on $\R^1$ on the basis of very nice original ideas and without using the much later discovered technique from \cite{BR18,BR18_2} (see also \cite{BR22-timedep-case}), which in turn are based on the application of the already mentioned superposition principle to the associated linearized porous media equation. As mentioned before, our results in the present paper apply to the classical porous media equation on $\R^d$ for all dimensions $d \geq 1$, see Section \ref{subsect:examples-nemytskii} (iv).

\textbf{1.6 Structure of the paper.}
Section \ref{sect:def-and-basics} contains our definition of nonlinear Markov processes, as well as Proposition \ref{prop:Markov-distr-from-marginals}, which shows that also in the nonlinear case Markovian path laws are uniquely determined by suitably chosen one-dimensional time marginals. A comparison to classical (linear) Markov processes is contained in Subsection \ref{sect:comp-to-lin}. After introducing all relevant equations and definitions of solutions and well-posedness in Subsection \ref{subsect:eq-sol}, we formulate and prove our main results in Subsection \ref{subsect:main-result}. In Subsection \ref{subsect:lin-special case}, we present new results for the classical linear case, which are obtained as special cases of Theorems \ref{aux-prop-ex+uniqu} and \ref{theorem1}. Section \ref{sect:Appl} contains applications of our main result, i.e. we associate nonlinear Markov processes to a large class of nonlinear FPKEs and McKean--Vlasov SDEs.

\subsubsection*{Notation}
The Euclidean norm and inner product on $\R^d$ are denoted by $|\cdot|$ and $\langle \cdot, \cdot \rangle$, and we set $\R_+ := [0,\infty)$. For topological spaces $X$ and $Y$, $C(X,Y)$ denotes the space of continuous functions $f: X \to Y$, $\Bscr(X)$ the Borel $\sigma$-algebra on $X$, and $\Pscr(X)$ the space of all probability measures on $\Bscr(X)$, equipped with the weak topology, i.e. the initial topology of the maps $\Pscr(X) \ni \mu \mapsto \int_Xf(x) \mu(dx)$, for all bounded $f \in C(X,\R)$. We let $\delta_x$ denote the Dirac measure in $x \in X$, and for $\mu \in \Pscr(X)$ and $h:X \to \R$ we sometimes write $\mu(h):= \int_X h(x)\,\mu(dx)$, provided the integral exists. For $X=\R^d$, we write $\Pscr$ instead of $\Pscr(\R^d)$ and $\Pscr_a$ for the space of measures $\mu \in \Pscr$, which are absolutely continuous with respect to Lebesgue measure $dx$, i.e. $\mu \ll dx$. If $\mu \ll \nu$ and $\nu \ll \mu$, we write $\mu \sim \nu$. 

Let $\Bscr_b^{(+)}(\R^d)$ denote the space of bounded (non-negative) Borel functions $f: \R^d \to \R$, and $C_b(\R^d)$ its subspace of continuous functions. For $m \in \N_0$, $C_{(c)}^m(\R^d)$ are the spaces of (compactly supported) continuous functions with continuous partial derivatives up to order $m$, and we write $C_{(c)}(\R^d)$ instead of $C^0_{(c)}(\R^d)$. We use the standard notation $L^p(\R^d;\R^m) = \big\{g: \R^d \to \R^m$ Borel, $|g|_{L^p} < \infty\big\}$ with the usual $L^p$-norms $|\cdot|_{p}$ for $p \in [1,\infty]$. The corresponding local spaces are denoted $L^p_{\loc}(\R^d;\R^m)$. For $m =1$, we write $L^p(\R^d)$ instead of $L^p(\R^d;\R)$.

The path space $C([s,\infty),\R^d)$ with the topology of locally uniform convergence is denoted by $\Omega_s$ with Borel $\sigma$-algebra $\Bscr(\Omega_s) = \sigma(\pi^s_\tau, \tau \geq s)$, where $\pi^s_t, t \geq s,$ are the usual projections on $\Omega_s$. We also use the notation $\Fscr_{s,r} := \sigma(\pi^s_\tau, s \leq \tau \leq r)$ and $\Pi^s_r : \Omega_s\to \Omega_r$, $\Pi^s_r : w \mapsto w_{|[r,\infty)}$ for $s \leq r$.

%$2^X$ denotes the power set of a set $X$.
%For $(\Pscr_0,\Ascr) \in \mathcal{B}(\Pscr) \times 2^{C([0,\infty),\Pscr)}$, consider the following property.
%\begin{align}\label{prop:P}
	%&\notag\zeta \in \Pscr_0 \text{ and }\theta \in \Pscr \text{ with }\theta \leq C\zeta \text{ for some }C>0 \implies \theta \in \Pscr_0, %\text{ and } \Ascr \subseteq C([0,\infty),\Pscr_0) \text{ such that}\\&\tag{P} (\mu_t)_{t \geq 0} \in \Ascr  \implies (\mu_{s+t})_{t %\geq 0} \text{ for each }s\geq 0\text{ and }(\nu_t)_{t \geq 0} \in C([0,\infty),\Pscr) \text{ with }\nu_t \leq C\mu_t \,\forall t \geq 0 %\\&\notag\text{ for some }C>0\text{ implies } (\nu_t)_{t \ge 0} \in \Ascr.
	%\end{align}
\section{Nonlinear Markov processes: Definition and comparison to linear case}\label{sect:def-and-basics}
\begin{dfn}\label{def:NL-Markov-process}
	Let $\Pscr_0 \subseteq \Pscr$. A \textit{nonlinear Markov process} is a family $(\mathbb{P}_{s,\zeta})_{(s,\zeta)\in \R_+\times \Pscr_0}$ of probability measures $\mathbb{P}_{s,\zeta}$ on $\Bscr(\Omega_s)$ such that
	\begin{enumerate}
		\item[(i)] The marginals $\mathbb{P}_{s,\zeta}\circ(\pi^s_t)^{-1} =: \mu^{s,\zeta}_t$ belong to $\mathcal{P}_0$ for all $0\leq s \leq t$ and $\zeta \in \Pscr_0$.
		\item[(ii)] The \textit{nonlinear Markov property} holds, i.e. for all $0\leq s \leq r \leq t$, $\zeta \in \Pscr_0$
		\begin{equation}\label{Markov-prop}\tag{MP}
			\mathbb{P}_{s,\zeta}(\pi^{s}_{t} \in A|\mathcal{F}_{s,r})(\cdot) = p_{(s,\zeta),(r,\pi^s_r(\cdot))}(\pi^r_t\in A) \quad \mathbb{P}_{s,\zeta}-\text{a.s.} \text{ for all }A \in \Bscr(\R^d),
		\end{equation}
		where $p_{(s,\zeta),(r,y)}, y \in \R^d$, is a regular conditional probability kernel from $\R^d$ to $\Bscr(\Omega_r)$ of $\mathbb{P}_{r,\mu^{s,\zeta}_r}[\,\,\cdot\, \,| \pi^r_r=y]$, $y \in \R^d$ (i.e. in particular $p_{(s,\zeta),(r,y)} \in \Pscr(\Omega_r)$ and $p_{(s,\zeta),(r,y)}(\pi^r_r = y) = 1$).
	\end{enumerate}
\end{dfn}

The term \emph{nonlinear} Markov property originates from the fact that in the situation of Definition \ref{def:NL-Markov-process} the map $\Pscr_0 \ni \zeta \mapsto \mu^{s,\zeta}_t$ is, in general, not linear on its domain, even if $\Pscr_0$ is a convex set (which we do not assume). For the special case of a classical linear Markov process with $\Pscr_0 = \Pscr$, this map is linear on its domain, see Remark \ref{rem:linear-case-convex}.
\begin{rem}\label{rem:Markov-prop-extends-to-fct}
	\begin{enumerate}
		\item[(i)] Consider the situation of Definition \ref{def:NL-Markov-process}, let $(s,\zeta) \in \R_+\times \Pscr_0$ and $s\leq r$. By definition of $p_{(s,\zeta),(r,y)}, y \in \R^d$, one has
		\begin{equation}\label{aux-eq-rem2.2}
			\mathbb{P}_{r,\mu^{s,\zeta}_r}(\cdot) = \int_{\mathbb{R}^d} p_{(s,\zeta),(r,y)}(\cdot)\,\mu^{s,\zeta}_r(dy) = \int_{\Omega_s} p_{(s,\zeta),(r,\pi^s_r(w))}(\cdot)\,\mathbb{P}_{s,\zeta}(dw).
		\end{equation}
		\item [(ii)] Let $0\leq s \leq r \leq t$ and $\zeta \in \Pscr_0$. By a monotone class-argument, \eqref{Markov-prop} is equivalent to
		$$\mathbb{E}_{s,\zeta}\big[h(\pi^s_t)|\mathcal{F}_{s,r}\big](\cdot) = p_{(s,\zeta),(r,\pi^s_r(\cdot))}\big(h(\pi^r_t)\big)\quad \mathbb{P}_{s,\zeta}-\text{a.s.} \text{ for all }h \in \Bscr_b(\R^d).$$
		\item[(iii)] The one-dimensional time marginals $\mu^{s,\zeta}_t = \mathbb{P}_{s,\zeta}\circ (\pi^s_t)^{-1}$ of a nonlinear Markov process satisfy the \textup{flow property}, i.e.
		\begin{equation*}
			\mu^{s,\zeta}_t = \mu^{r,\mu^{s,\zeta}_r}_t,\quad \forall\, 0\leq s \leq r \leq t, \zeta \in \Pscr_0.
		\end{equation*}
		Indeed, by \eqref{aux-eq-rem2.2}, for all $A\in \Bscr(\R^d)$:
		\begin{align*}
			\mu^{s,\zeta}_t(A) &= \mathbb{E}_{s,\zeta}\big[\mathbb{P}_{s,\zeta}(\pi^s_t \in A|\Fscr_{s,r})\big] \\&= \mathbb{E}_{s,\zeta}\big[p_{(s,\zeta),(r,\pi^s_r)}(\pi^r_t\in A)\big] = \mathbb{P}_{r,\mu^{s,\zeta}_r}\big(\pi^r_t \in A\big) = \mu^{r,\mu^{s,\zeta}_r}_t(A).
		\end{align*}
	\end{enumerate}

\end{rem}
The following proposition shows that the finite-dimensional distributions of the path measures $\mathbb{P}_{s,\zeta}$ of a nonlinear Markov process (and hence the path measures itself) are uniquely determined by the family of one-dimensional time marginals $p_{r,t}^{s,\zeta}(x,dz)$, $s \leq r, x \in \R^d$, defined in \eqref{eq:rcp-marginals} below.
\begin{prop}\label{prop:Markov-distr-from-marginals}
	Let $(\mathbb{P}_{s,\zeta})_{(s,\zeta) \in \R_+\times \Pscr_0}$ be a nonlinear Markov process. 
	For $\zeta \in \Pscr_0, 0\leq s \leq r\leq t$ and $ x \in \R^d$, define $p^{s,\zeta}_{r,t}(x,dz) \in \Pscr$ by 
	\begin{equation}\label{eq:rcp-marginals}
		p^{s,\zeta}_{r,t}(x,dz) := p_{(s,\zeta),(r,x)}\circ (\pi^r_t)^{-1}(dz) \big(= \mathbb{P}_{r,\mu^{s,\zeta}_r}[\,\,\cdot\,\,|\pi^r_r = x] \circ (\pi^r_t)^{-1}\big),
	\end{equation}
	which is uniquely determined  for $\mu^{s,\zeta}_r$-a.e. $x\in \R^d$.
	Then for $n \in \N_0$, $f \in \mathcal{B}_b((\R^d)^{n+1})$ and $s \leq t_0< \dots <t_n$:
	\begin{align}\label{eq:Markov-fdd}
		\E_{s,\zeta}&[f(\pi^s_{t_0},\dots,\pi^s_{t_n})] \\&=\notag \int_{\mathbb{R}^d}\bigg(\dots \int_{\mathbb{R}^d}\bigg(\int_{\mathbb{R}^d} f(x_0,\dots,x_n)\, p^{s,\zeta}_{t_{n-1},t_n}(x_{n-1},dx_n)\bigg) p^{s,\zeta}_{t_{n-2},t_{n-1}}(x_{n-2},dx_{n-1})\dots\bigg)\mu^{s,\zeta}_{t_0}(dx_0).
	\end{align}
\end{prop}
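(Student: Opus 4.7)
I would proceed by induction on $n$, using the nonlinear Markov property in the form of Remark \ref{rem:Markov-prop-extends-to-fct}(ii) together with the tower property of conditional expectation. The base case $n=0$ is immediate from the definition of the one-dimensional marginal $\mu^{s,\zeta}_{t_0} = \mathbb{P}_{s,\zeta} \circ (\pi^s_{t_0})^{-1}$.

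For the inductive step, assume the formula holds for $n-1$ and fix $f \in \Bscr_b((\R^d)^{n+1})$ and $s \leq t_0 < \dots < t_n$. Conditioning on $\Fscr_{s,t_{n-1}}$ and using that $\pi^s_{t_0},\dots,\pi^s_{t_{n-1}}$ are $\Fscr_{s,t_{n-1}}$-measurable, I want to show
\[
\E_{s,\zeta}\big[f(\pi^s_{t_0},\dots,\pi^s_{t_n})\,\big|\,\Fscr_{s,t_{n-1}}\big](\omega) = \int_{\R^d} f\big(\pi^s_{t_0}(\omega),\dots,\pi^s_{t_{n-1}}(\omega),x_n\big)\,p^{s,\zeta}_{t_{n-1},t_n}\big(\pi^s_{t_{n-1}}(\omega),dx_n\big)
\]
$\mathbb{P}_{s,\zeta}$-almost surely. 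This is the content of the Markov property combined with a standard "freezing lemma". For $f$ of product form $f(x_0,\dots,x_n) = g(x_0,\dots,x_{n-1})h(x_n)$, the left-hand side equals $g(\pi^s_{t_0},\dots,\pi^s_{t_{n-1}})\,\E_{s,\zeta}[h(\pi^s_{t_n})|\Fscr_{s,t_{n-1}}]$, and then Remark \ref{rem:Markov-prop-extends-to-fct}(ii) together with the definition of $p^{s,\zeta}_{r,t}$ in \eqref{eq:rcp-marginals} yields the claim. Extension to general bounded Borel $f$ proceeds by a monotone class argument on $\Bscr_b((\R^d)^{n+1})$.

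Taking $\E_{s,\zeta}[\,\cdot\,]$ on both sides and setting
\[
\tilde f(x_0,\dots,x_{n-1}) := \int_{\R^d} f(x_0,\dots,x_{n-1},x_n)\,p^{s,\zeta}_{t_{n-1},t_n}(x_{n-1},dx_n),
\]
which is bounded and Borel measurable (Fubini plus joint measurability of the kernel), I obtain
\[
\E_{s,\zeta}\big[f(\pi^s_{t_0},\dots,\pi^s_{t_n})\big] = \E_{s,\zeta}\big[\tilde f(\pi^s_{t_0},\dots,\pi^s_{t_{n-1}})\big].
\]
Applying the induction hypothesis to $\tilde f$ and the times $t_0,\dots,t_{n-1}$ gives the desired iterated-integral representation.

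\textbf{Main difficulty.} The only non-routine point is that the nonlinear Markov property in Definition \ref{def:NL-Markov-process}(ii) involves the auxiliary measure $\mathbb{P}_{r,\mu^{s,\zeta}_r}$ rather than $\mathbb{P}_{s,\zeta}$ itself, and the regular conditional kernel $p_{(s,\zeta),(r,y)}$ is only uniquely determined for $\mu^{s,\zeta}_r$-a.e. $y$. I therefore have to be careful that the $\mathbb{P}_{s,\zeta}$-a.s. identity used above is meaningful, which follows because $\pi^s_r$ pushes $\mathbb{P}_{s,\zeta}$ forward precisely to $\mu^{s,\zeta}_r$, so the $\mu^{s,\zeta}_r$-null set of bad $y$'s is hit with $\mathbb{P}_{s,\zeta}$-probability zero by $\pi^s_r$. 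Beyond this bookkeeping, the argument is a clean two-line induction.
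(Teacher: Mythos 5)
Your proposal is correct and takes essentially the same route as the paper: induction on $n$, the tower property for conditional expectation, and the nonlinear Markov property in the form of Remark \ref{rem:Markov-prop-extends-to-fct}(ii). The only cosmetic difference is bookkeeping: the paper invokes the monotone-class reduction once at the outset so that the induction runs entirely over product functions $f_0 \otimes \dots \otimes f_n$, whereas you phrase the induction hypothesis for general bounded Borel $f$ and deploy the monotone-class extension inside the inductive step when establishing the conditional-expectation identity; both variants are valid, and your closing remark about the $\mu^{s,\zeta}_r$-a.e.\ determination of the kernel correctly addresses the one point of care implicit in the paper's argument.
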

\begin{proof}
	Let $(s,\zeta) \in \mathbb{R}_+ \times \Pscr_0$. By a monotone class argument, it suffices to consider $f = f_0 \otimes \dots \otimes f_n$ with $f_i \in \mathcal{B}_b(\R^d)$, $n \in \N_0$, and to proceed by induction over $n$. For $n=0$,
	\begin{equation*}
		\mathbb{E}_{s,\zeta}[f_0(\pi^s_{t_0})] = \int_{\mathbb{R}^d} f_0(x_0)\, \mu^{s,\zeta}_{t_0}(dx_0)
	\end{equation*}
	holds by definition of $\mu^{s,\zeta}_{t_0}$. For $f_i \in \Bscr_b(\R^d), 0\leq i \leq n+1$, and $s \leq t_0 < \dots < t_{n+1}$, we have
	\begin{align*}
		&	\E_{s,\zeta}[\Pi_{i=0}^{n+1}f_i(\pi^s_{t_i})] \\&= \E_{s,\zeta}\big[\Pi_{i=0}^{n}f_i(\pi^s_{t_i})\mathbb{E}_{s,\zeta}[f_{n+1}(\pi^s_{t_{n+1}})|\Fscr_{s,t_n}]\big] =  \E_{s,\zeta}\big[\Pi_{i=0}^{n}f_i(\pi^s_{t_i})p_{(s,\zeta),(t_n,\pi^s_{t_{n}})}(f_{n+1}(\pi^{t_n}_{t_{n+1}}))\big] \\&=
		\int_{\R^d}\bigg(\dots\int_{\R^d} \Pi_{i=0}^{n}f_i(x_i)\bigg(	\int_{\R^d}  f_{n+1}(x_{n+1}) \,p^{s,\zeta}_{t_n,t_{n+1}}(x_n,dx_{n+1})\bigg)\, p^{s,\zeta}_{t_{n-1},t_n}(x_{n-1},dx_n)\dots\bigg)\mu^{s,\zeta}_{t_0}(dx_0),
	\end{align*}
	where we used the nonlinear Markov property and Remark \ref{rem:Markov-prop-extends-to-fct} (ii) for the second equality, and the induction assumption for the third equality.
\end{proof}

\begin{rem}\label{rem:nonlinear-feature}
	\begin{enumerate}
		\item [(i)] Hence, similarly as in the linear case, the path measure $\mathbb{P}_{s,\zeta}$ can be reconstructed from one-dimensional time marginals $p^{s,\zeta}_{r,t}(x,\cdot)$, $t \geq r \geq s \geq 0, x \in \R^d$. However, the nonlinear nature becomes evident via the fact that even in the case $\Pscr_0 = \Pscr$ it is usually \textup{not} true that $p^{s,\zeta}_{r,t}(x,\cdot) = \mathbb{P}_{r,\delta_x}\circ (\pi^r_t)^{-1}$, see Remark \ref{rem:final-rem}.
		\item[(ii)] By Proposition \ref{prop:Markov-distr-from-marginals}, the nonlinear Markov property extends to
		$$\mathbb{E}_{s,\zeta}\big[h(\pi^s_{t_1},\dots \pi^s_{t_n})|\mathcal{F}_{s,r}\big](\cdot) = p_{(s,\zeta),(r,\pi^s_r(\cdot))}\big(h(\pi^r_{t_1},\dots,\pi^r_{t_n})\big)\quad \mathbb{P}_{s,\zeta}-\text{a.s.}$$
		for all $0\leq s \leq r \leq t_1< \dots < t_n$, $n\in \N$, $\zeta \in \Pscr_0$ and $h \in \Bscr_b{((\R^d)^n)}$. By another application of the monotone class theorem, this generalizes to
		$$\mathbb{E}_{s,\zeta}\big[G\circ \Pi^s_r|\mathcal{F}_{s,r}\big](\cdot) = p_{(s,\zeta),(r,\pi^s_r(\cdot))}\big(G\big)\quad \mathbb{P}_{s,\zeta}-\text{a.s.}$$
		for all $0\leq s \leq r$, $\zeta \in \Pscr_0$ and $G \in \Bscr_b(\Omega_r)$. Note that a function $\tilde{G}:\Omega_s \to \R$ is bounded and $\sigma(\pi^s_\tau, \tau \geq r)$-measurable if and only if $\tilde{G} = G\circ \Pi^s_r$ with $G \in \Bscr_b(\Omega_r)$.
		
	\end{enumerate}
	
\end{rem}

\subsection{Comparison with linear Markov processes}\label{sect:comp-to-lin}
Here we comment on the connection to classical linear Markov processes. More precisely, we show that our Definition \ref{def:NL-Markov-process} is a proper extension of the linear definition. Afterwards we briefly recall how classical Markov processes typically arise from linear Kolmogorov operators. 

Let $(\mathbb{P}_{s,x})_{(s,x) \in \R_+\times \R^d}$ be a linear normal time-inhomogeneous \textit{Markov process}, i.e. for each $(s,x)$, $\mathbb{P}_{s,x}$ is a probability measure on $\Bscr(\Omega_s)$ with $\mathbb{P}_{s,x}\circ (\pi^s_s)^{-1} = \delta_x$, such that 
\begin{enumerate}
	\item[(i)] $x \mapsto \mathbb{P}_{s,x}(B)$ is measurable for all $s \geq 0$ and $B \in \Bscr(\Omega_s)$,
	\item[(ii)] the linear Markov property holds, i.e. for all $s\leq r \leq t$ 
	\begin{equation*}
		\mathbb{P}_{s,x}(\pi^s_t \in A|\Fscr_{s,r})(\cdot) = \mathbb{P}_{r,\pi^s_r(\cdot)}(\pi^r_t \in A)\quad \mathbb{P}_{s,x}-\text{a.s.} \text{ for all }A \in \Bscr(\R^d).
	\end{equation*}
\end{enumerate}
\begin{rem}\label{rem:linear-case-convex}
	It is natural to set $\mathbb{P}_{s,\zeta} := \int_{\mathbb{R}^d} \mathbb{P}_{s,y}\,\zeta(dy)$ for any $s\geq 0$ and non-Dirac $\zeta \in \Pscr$. Then $\mathbb{P}_{s,\zeta}\circ (\pi^s_s)^{-1} = \zeta$, and we again define $\mu^{s,\zeta}_t$, $t \geq s$, as in Definition \ref{def:NL-Markov-process} (i). Then clearly $\mu^{s,\zeta}_t = \int_{\R^d}\mu^{s,y}_t\,\zeta(dy)$, and for any $0\leq s \leq t$, the map $\Pscr\ni \zeta \mapsto \mu^{s,\zeta}_t$ is linear on its domain.
\end{rem}

\begin{prop}\label{prop:linMarkov-is-nonlinMarkov}
	$(\mathbb{P}_{s,\zeta})_{(s,\zeta)\in \mathbb{R}_+\times \Pscr}$ is a nonlinear Markov process with $\Pscr_0 = \Pscr$ in the sense of Definition \ref{def:NL-Markov-process}.
\end{prop}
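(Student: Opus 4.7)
The plan is to exhibit a concrete regular conditional probability kernel and then reduce the nonlinear Markov property to the linear one by integrating against $\zeta$. Concretely, I would take
$$p_{(s,\zeta),(r,y)} := \mathbb{P}_{r,y},\quad y \in \R^d,\, s\leq r,$$
and verify the two requirements of Definition~\ref{def:NL-Markov-process} in turn. Condition~(i) is immediate since $\Pscr_0 = \Pscr$ and $\mu^{s,\zeta}_t$ is by construction a probability measure on $\R^d$.

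For condition~(ii), the first sub-task is to check that $y \mapsto \mathbb{P}_{r,y}$ is a regular conditional probability kernel of $\mathbb{P}_{r,\mu^{s,\zeta}_r}[\,\cdot\,|\pi^r_r = y]$. The normality assumption $\mathbb{P}_{r,y}\circ (\pi^r_r)^{-1} = \delta_y$ yields $p_{(s,\zeta),(r,y)}(\pi^r_r = y) = 1$, while the measurability hypothesis~(i) of the linear Markov process gives that $y \mapsto \mathbb{P}_{r,y}(B)$ is measurable for every $B \in \Bscr(\Omega_r)$. Using $\mathbb{P}_{r,\mu^{s,\zeta}_r} = \int \mathbb{P}_{r,y}\,\mu^{s,\zeta}_r(dy)$ (cf.\ Remark~\ref{rem:linear-case-convex}) and $\mathbb{P}_{r,y}(\pi^r_r = y) = 1$, I would compute, for $B \in \Bscr(\Omega_r)$ and $A \in \Bscr(\R^d)$,
$$\mathbb{P}_{r,\mu^{s,\zeta}_r}(B \cap \{\pi^r_r \in A\}) = \int 1_A(y)\,\mathbb{P}_{r,y}(B)\,\mu^{s,\zeta}_r(dy),$$
which is exactly the defining property of a regular conditional probability kernel given $\pi^r_r$.

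The second sub-task is to verify the identity
$$\mathbb{P}_{s,\zeta}(\pi^s_t \in A \mid \Fscr_{s,r})(\cdot) = \mathbb{P}_{r,\pi^s_r(\cdot)}(\pi^r_t \in A) \quad \mathbb{P}_{s,\zeta}\text{-a.s.}$$
for $\zeta \in \Pscr$ and $0 \leq s \leq r \leq t$. Fixing $F \in \Fscr_{s,r}$, I would apply Fubini, use the linear Markov property of each $\mathbb{P}_{s,x}$ on $F$, and then reassemble:
\begin{align*}
\mathbb{P}_{s,\zeta}(\{\pi^s_t \in A\} \cap F) &= \int_{\R^d}\!\!\int_F \mathbb{P}_{r,\pi^s_r(w)}(\pi^r_t \in A)\,\mathbb{P}_{s,x}(dw)\,\zeta(dx) \\
&= \int_F \mathbb{P}_{r,\pi^s_r(w)}(\pi^r_t \in A)\,\mathbb{P}_{s,\zeta}(dw).
\end{align*}
The measurability assumption~(i) and joint measurability of $(x,w) \mapsto \mathbb{P}_{r,\pi^s_r(w)}(\pi^r_t \in A)$ ensure Fubini is applicable.

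The only mildly subtle point is the kernel-identification in the first sub-task, since the regular conditional probability is only determined $\mu^{s,\zeta}_r$-a.e.\ in $y$; however, taking $p_{(s,\zeta),(r,y)} = \mathbb{P}_{r,y}$ as the canonical representative resolves this once and for all, and then the rest of the argument is a routine disintegration using the measurability of $x \mapsto \mathbb{P}_{s,x}(B)$. No other obstacle is expected.
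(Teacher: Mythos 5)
Your proposal is correct and follows essentially the same route as the paper: identify $p_{(s,\zeta),(r,y)} = \mathbb{P}_{r,y}$ as the regular conditional probability kernel (using the disintegration $\mathbb{P}_{r,\mu^{s,\zeta}_r} = \int \mathbb{P}_{r,y}\,\mu^{s,\zeta}_r(dy)$ and the concentration $\mathbb{P}_{r,y}(\pi^r_r = y)=1$), and then reduce \eqref{Markov-prop} to the linear Markov property. The paper compresses the second step into a single ``thus'', implicitly using the extension of the linear Markov property from $\mathbb{P}_{s,x}$ to $\mathbb{P}_{s,\zeta}$ (asserted as \eqref{intro:Markov-prop} in the introduction); you spell out the Fubini-type computation behind that extension, which is a useful elaboration but not a different argument.
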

\begin{proof}
	
	We have $\mathbb{P}_{r,\mu^{s,\zeta}_r} = \int_{\mathbb{R}^d} \mathbb{P}_{r,y}\,\mu^{s,\zeta}_r(dy)$ and $\mathbb{P}_{r,y}$ is concentrated on $\{\pi^r_r = y\}$. Hence $\mathbb{P}_{r,y}, y \in \R^d$, is  a regular conditional probability kernel of $\mathbb{P}_{r,\mu^{s,\zeta}_r}[\,\cdot \, | \pi^r_r=y]$, $y \in \R^d$, and thus \eqref{Markov-prop} holds with $p_{(s,\zeta),(r,\pi^s_r(\cdot))} = \mathbb{P}_{r,\pi^s_r(\cdot)}$.
\end{proof}

\begin{rem}
	The transition probabilities $\mu^{s,x}_t := \mu^{s,\delta_x}_t = \mathbb{P}_{s,x}\circ (\pi^s_t)^{-1}$ of a linear Markov process satisfy the \textup{Chapman-Kolmogorov equations}
	\begin{equation}\tag{CK}\label{CK}
		\mu^{s,x}_{t}(A) =\int_{\mathbb{R}^d} \mu^{r,y}_t(A)\,\mu^{s,x}_r(dy),\quad \forall\, 0\leq s \leq r \leq t, x\in \R^d \text{ and }A \in \Bscr(\R^d).
	\end{equation}
	They also satisfy the flow property, since
	\begin{align*}
		\mu^{s,\zeta}_t(A) &= \mathbb{E}_{s,\zeta}\big[\mathbb{P}_{s,\zeta}(\pi^s_t\in A|\Fscr_{s,r})\big]\\& = \mathbb{E}_{s,\zeta}\big[\mathbb{P}_{r,\pi^s_r}(\pi^r_t \in A)\big] = \int_{\R^d} \mathbb{P}_{r,y}(\pi^r_t \in A)\,\mu^{s,\zeta}_r(dy) = \mathbb{P}_{r,\mu^{s,\zeta}_r}(\pi^r_t \in A)= \mu^{r,\mu^{s,\zeta}_r}_t(A)
	\end{align*}
	(of course this also follows directly from Proposition \eqref{prop:linMarkov-is-nonlinMarkov} and Remark \ref{rem:Markov-prop-extends-to-fct} (iii)). To us it seems that the Chapman-Kolmogorov equations do not have a natural counterpart in the nonlinear case, since even in the rather special case $\Pscr_0 = \Pscr$,  \eqref{CK} will usually not be satisfied. Indeed, \eqref{CK} is not satisfied  when the curves $(\mu^{s,x}_t)_{t \geq s}$ are solutions to a nonlinear FPKE, which is the case for all our examples in Section \ref{sect:Appl}.
	From our point of view, with regard to nonlinear Markov processes, the flow property is the key property on the level of the one-dimensional time marginals, see also our main result, Theorem \ref{theorem1}.
\end{rem}

\subsubsection{Linear Markov processes from linear Kolmogorov operators} Before we continue with the nonlinear case, let us recall how linear Markov processes typically arise from linear Kolmogorov operators $L$ of type
\begin{equation*}
	L_t\varphi(x) = a_{ij}(t,x)\partial^2_{ij}\varphi(x) + b_i(t,x)\partial_i \varphi(x),\quad \varphi \in C^2(\R^d),
\end{equation*}
where $a_{ij}$ and $b_i$, $1 \leq i,j \leq d$, are Borel coefficients on $\R_+\times \R^d$ such that $a := (a_{ij})_{i,j \leq d}$ is symmetric and non-negative definite. The stochastic differential equation (SDE) associated with $L$ is
\begin{equation*}
	dX_t = b(t,X_t)dt+\sigma(t,X_t)dB_t,\quad \quad\quad  t\geq s,
\end{equation*}
where $\sigma = (\sigma_{ij})_{1\leq i,j\leq d}$ is a square root of $a$ so that $1/2\sigma \sigma^T = a$, and $B$ is a Brownian motion in $\R^d$. Probabilistic weak solutions to this SDE are equivalent to solutions to the martingale problem for $L$ (see \cite[Prop.4.11]{KS91}, and also \cite{StroockVaradh2007}). By Itô's formula and the Ambrosio-Figalli-Trevisan superposition principle (see for instance \cite[Thm.2.5]{Trevisan16}), $X^{s,x}$ is a weak solution to the SDE with $X^{s,x}_s = x$, if and only if its one-dimensional time marginal curve $t\mapsto \mu^{s,x}_t$ solves the Cauchy problem of the FPKE
\begin{equation}\label{FP-eq}
	\partial_t \mu_t = L^*_t \mu_t,\quad t\geq s, \quad \mu_s = \delta_x,
\end{equation}
in the distributional sense ($L^*$ denotes the formal dual operator of $L$) and satisfies a mild integrability condition with respect to $a$ and $b$. By the aforementioned superposition principle and by \cite[Prop.4.11]{KS91}, if either the Cauchy problem for the martingale problem, the SDE or the FPKE is well-posed for every initial datum $(s,x)$, then all three problems are well-posed. In this case, the path laws $\mathbb{P}_{s,x}$ of the unique weak solutions to the SDE (which are exactly the martingale problem solutions) constitute a linear Markov process.

The question whether a Markov process exists in ill-posed regimes is more delicate, but positive answers are known: If no uniqueness is known at all, at least for continuous bounded coefficients one can select one particular martingale solution $\mathbb{P}_{s,x}$ for each initial datum such that the selected family constitutes a Markov process, cf. \cite[Ch.12]{StroockVaradh2007}.
If partial well-posedness holds in the sense that for suitably rich subclasses of initial conditions $\mathcal{R}_s \subseteq \Pscr$, the Cauchy problem \eqref{FP-eq} from initial time $s$ has a unique solution in a suitable subclass $\mathcal{R}_{[s,\infty)}$, then for all $\zeta \in \Rscr_s$ the corresponding martingale problem has a unique solution $\mathbb{P}_{s,\zeta}$ with one-dimensional time marginals in $\mathcal{R}_{[s,\infty)}$, and the path measures $\mathbb{P}_{s,\zeta}$ induce a Markov process via disintegration, see \cite[Prop.2.8]{Trevisan16}.

\section{Construction of nonlinear Markov processes: Main result}\label{sect:main-result}
In this section we present our main result on the construction of nonlinear Markov processes from prescribed one-dimensional time marginals given as solution curves to nonlinear FPKEs. First, let us introduce the analytic and stochastic equations associated with a nonlinear Kolmogorov operator.

\subsection{Equations and solutions}\label{subsect:eq-sol}
Let $a_{ij}, b_i: \mathbb{R}_+\times \Pscr\times \R^d \to \R$, $1 \leq i,j \leq d$, be Borel maps (with respect to the weak topology on $\Pscr$), set $a := (a_{ij})_{ i,j \leq d}, b:= (b_i)_{i \leq d}$, and consider for $(t,\mu)\in \mathbb{R}_+\times \Pscr$ the nonlinear Kolmogorov operator
\begin{equation}\label{op-L}
	L_{t,\mu}\varphi (x) = a_{ij}(t,\mu,x)\partial_{ij}^2\varphi(x)+b_i(t,\mu,x)\partial_i \varphi(x),\quad \varphi \in C^2(\R^d).
\end{equation}
We assume $a = \frac{1}{2}\sigma \sigma ^T$ for some Borel map $\sigma: \mathbb{R}_+\times {\Pscr}\times \R^d \to \R^{d\times d}$, with coefficients $\sigma_{ij}$, and we denote by $B$ an $\R^d$-standard Brownian motion. For a random variable $Z$, let $\mathcal{L}_Z$ denote its distribution. In particular, for a process $X$ with paths in $\Omega_s$, $\mathcal{L}_X$ denotes its path law on $\Omega_s$, and $\mathcal{L}_{X_t}$, $t \geq s$, its one-dimensional time marginals on $\R^d$.

As in the linear case, three nonlinear problems are associated with $L$ and an initial datum $(s,\zeta)\in \mathbb{R}_+\times \Pscr$: On the level of path measures the distribution-dependent stochastic differential equation (also \textit{McKean-Vlasov equation})
\begin{equation}\tag{DDSDE}\label{DDSDE}
	dX_t = b(t,\mathcal{L}_{X_t},X_t)dt+\sigma(t,\mathcal{L}_{X_t},X_t)dB_t,\quad t \geq s,\quad \mathcal{L}_{X_s} = \zeta,
\end{equation}
(the same equation as in \eqref{intro:DDSDE})
and the nonlinear martingale problem, which consists of finding $P \in \mathcal{P}(\Omega_s)$ such that
\begin{equation}\label{MGP}\tag{MGP}
	P\circ (\pi^s_s)^{-1} = \zeta \,\text{  and  }\,	\varphi(\pi^s_t)-\int_s^t L_{r,\mathcal{L}_{\pi^s_r}}\varphi(\pi^s_r)\, dr , \,\,t \geq s,\text{ is a martingale }\forall \varphi \in C^2_c(\R^d)
\end{equation}
with respect to $(\mathcal{F}_{s,t})_{t \geq s}$. On state space level, one has the nonlinear FPKE
\begin{equation}\tag{FPE}\label{NLFPE}
	\partial_t \mu_t = L^*_{t,\mu_t}\mu_t,\quad t\geq s,\quad  \mu_s = \zeta,
\end{equation}
where $L_{t,\mu}^*$ denotes the formal dual operator of $L_{t,\mu}$ (this is the same equation as \eqref{intro:NLFPE}).
For an initial datum $(s,\eta) \in \R_+\times \Pscr$, from \eqref{NLFPE} one obtains a \textit{linear} FPKE by fixing a weakly continuous curve $[s,\infty) \ni t \mapsto \mu_t \in \mathcal{P}$ in $a$ and $b$:
\begin{equation}\tag{$\ell$FPE}\label{LFPE}
	\partial_t \nu_t = L^*_{t,\mu_t}\nu_t,\quad t\geq s,\quad \nu_s = \eta.
\end{equation}
Analogously, one obtains a (non-distribution dependent) SDE from \eqref{DDSDE}:
\begin{equation}\tag{$\ell$DDSDE}\label{LDDSDE}
	dX_t = b(t,\mu_t,X_t)dt+\sigma(t,\mu_t,X_t)dB_t,\quad t\geq s, \quad \mathcal{L}_{X_s} = \eta,
\end{equation}
without any a priori relation between $X_t$ and $\mu_t$,
and a linear martingale problem, namely to find $P \in \Pscr(\Omega_s)$ such that
\begin{equation}\tag{$\ell$MGP}\label{linMGP}
	P\circ (\pi^s_s)^{-1} = \eta \,\text{  and  }\,	\varphi(\pi^s_t)-\int_s^t L_{r,\mu_r}\varphi(\pi^s_r)\, dr,\,\, t \geq s, \text{ is a martingale }\forall \varphi \in C^2_c(\R^d)
\end{equation}
with respect to $(\mathcal{F}_{s,t})_{t \geq s}$.

\begin{dfn}\label{def:all-single-eq.}
	Let $a_{ij},b_i,\sigma_{ij}: \mathbb{R}_+\times \Pscr\times \mathbb{R}^d \to \mathbb{R}$ be as above, $s\geq 0$ and $\zeta, \eta \in \Pscr$.
	\begin{enumerate}
		\item [(a)] \textbf{Solutions to \eqref{DDSDE}, \eqref{MGP}, \eqref{NLFPE} and linear counterparts.}
		\begin{enumerate}
			\item[(i)] A stochastic process $(X_t)_{t \geq s}$ on a filtered probability space $(\Omega, \Fscr,(\Fscr_t)_{t \geq s},\mathbb{P},B)$ with an $\R^d$-valued $(\mathcal{F}_t)$-adapted Brownian motion $B$ is a \textit{(probabilistic weak) solution to \eqref{DDSDE}} from $(s,\zeta)$, if it is $(\mathcal{F}_t)$-adapted, $\mathcal{L}_{X_s} = \zeta$,
			\begin{equation}\label{int-DDSDE-sol}
				\mathbb{E}\bigg[	\int_s^T|b(t,\Lscr_{X_t},X_t)|+|\sigma(t,\Lscr_{X_t},X_t)|^2  dt\bigg] < \infty,\quad \forall T>s,
			\end{equation}
			and $\mathbb{P}$-a.s.
			\begin{equation}\label{eq-DDSDE-sol}
				X_t-X_s = \int_s^t b(r,\mathcal{L}_{X_r},X_r)dr+\int_s^t \sigma(r,\mathcal{L}_{X_r},X_r)dB_r, \quad \forall t \geq s.
			\end{equation}
			\item[(ii)] A process $X$ as in (i) is a solution to \eqref{LDDSDE} from $(s,\eta)$, if $\mathcal{L}_{X_s} = \eta$ and \eqref{int-DDSDE-sol} and \eqref{eq-DDSDE-sol} hold with $\mu_t$ instead of $\mathcal{L}_{X_t}$, where $[s,\infty) \ni t \mapsto \mu_t \in \Pscr$ is weakly continuous.
			\item[(iii)] $P \in \mathcal{P}(\Omega_s)$ is a \textit{solution to \eqref{MGP}} from $(s,\zeta)$, if
			\begin{equation}\label{global-int-NLMGP}
				\mathbb{E}_P\bigg[\int_s^T |b(t,\mathcal{L}_{\pi^s_t},\pi^s_t)|+|a(t,\mathcal{L}_{\pi^s_t},\pi^s_t)|\,dt\bigg] <\infty, \quad \forall T>s,
			\end{equation}
			and $P$ solves \eqref{MGP}.
			\item[(iv)] Similarly, $P$ is a solution to \eqref{linMGP} from $(s,\eta)$, if \eqref{global-int-NLMGP} holds, with $\mu_t$ replacing $\mathcal{L}_{\pi^s_t}$, and $P$ solves \eqref{linMGP}.
			\item [(v)] A weakly continuous curve $\mu:[s,\infty)\ni t \mapsto \mu_t$ in $\mathcal{P}$ is a \textit{solution to \eqref{NLFPE}} from $(s,\zeta)$, if
			\begin{equation*}
				\int_s^T \int_{\mathbb{R}^d}|a(t,\mu_t,x)|+|b(t,\mu_t,x)|\,\mu_t(dx)dt < \infty, \quad \forall T> s,
			\end{equation*}
			and for all $\varphi \in C^2_c$
			\begin{equation*}
				\int_{\mathbb{R}^d}\varphi (x)\,\mu_t(dx) - \int_{\mathbb{R}^d} \varphi (x)\,\zeta(dx)= \int_s^t\int_{\mathbb{R}^d} L_{\tau,\mu_\tau}\varphi(x) \,\mu_\tau(dx) d\tau,\quad \forall t\geq s.
			\end{equation*}
			
			\item[(vi)] For $\mathcal{P}_0 \subseteq \Pscr$, a family $\{\mu^{s,\zeta}\}_{(s,\zeta)\in \R_+\times \Pscr_0}$, $\mu^{s,\zeta} = (\mu^{s,\zeta}_t)_{t \geq s}$, of solutions to \eqref{NLFPE} with initial datum $\mu^{s,\zeta}_s = \zeta$ is a \textit{solution flow}, if it satisfies the flow property, i.e. if $\mu^{s,\zeta}_t \in \Pscr_0$ for all $s\leq t, \zeta \in \Pscr_0$, and
			\begin{equation*}
				\mu^{s,\zeta}_t = \mu^{r,\mu^{s,\zeta}_r}_t,\quad \forall\,0\leq  s \leq r \leq t, \zeta \in \Pscr_0.
			\end{equation*}
			\item[(vii)] A weakly continuous curve $\nu: [s,\infty)\ni t \mapsto \nu_t$ in $\Pscr$ is a \textit{solution to \eqref{LFPE}} from $(s,\eta)$, if 
			\begin{equation*}
				\int_s^T \int_{\mathbb{R}^d}|a(t,\mu_t,x)|+|b(t,\mu_t,x)|\,\nu_t(dx)dt < \infty, \quad \forall T> s,
			\end{equation*}
			and for all $\varphi \in C^2_c(\R^d)$
			\begin{equation*}
				\int_{\mathbb{R}^d}\varphi (x)\,\nu_t(dx) - \int_{\mathbb{R}^d} \varphi \,\eta (dx)= \int_s^t\int_{\mathbb{R}^d} L_{\tau,\mu_\tau}\varphi(x) \,\nu_\tau(dx) d\tau ,\quad \forall t\geq s.
			\end{equation*}
		\end{enumerate}
		\item[(b)] \textbf{Uniqueness of solutions.} 
		\begin{enumerate}
			\item[(i)] Solutions to \eqref{DDSDE} from $(s,\zeta)$ are weakly unique, if for any two solutions $X^1,X^2$, $\mathcal{L}_{X^i_s} = \zeta$, $i \in \{1,2\}$, implies $\mathcal{L}_{X^1} = \mathcal{L}_{X^2}$.
			\item[(ii)] Solutions to \eqref{MGP} from $(s,\zeta)$ are unique, if for any two solutions $P^1,P^2$, $P^i\circ (\pi^s_s)^{-1} = \zeta$, $i \in \{1,2\}$, implies $P^1=P^2$ on $\Bscr(\Omega_s)$.
			\item[(iii)] Solutions to \eqref{NLFPE} from $(s,\zeta)$ are unique, if for any two solutions $\mu^1,\mu^2$, $\mu^i_s = \zeta$, $i \in \{1,2\}$, implies $\mu_t^1 = \mu_t^2$ for all $t \geq s$.
			\item[(iv)] 
			For a fixed weakly continuous curve $[s,\infty) \ni t \mapsto \mu_t  \in \Pscr$, uniqueness of solutions to \eqref{LDDSDE}, \eqref{linMGP} and \eqref{LFPE} from an initial condition $(s,\eta) \in \R_+\times \Pscr$ is defined analogously.
		\end{enumerate}
		For each of these problems, \textit{uniqueness in a subclass of solutions} means any two solutions from a specified subclass with identical initial datum coincide.
	\end{enumerate}
\end{dfn}
\begin{rem}\label{rem:known-relations-of-problems}
	The following relations between solutions to the above problems are well-known. 
	\begin{enumerate}
		\item 	[(i)] The path law $P$ of a weak solution $X$ to \eqref{DDSDE}, resp. \eqref{LDDSDE}, solves \eqref{MGP}, resp. \eqref{linMGP}, and vice versa, for a solution $P$ to \eqref{MGP}, resp. \eqref{linMGP}, there exists a weak solution $X$ to \eqref{DDSDE}, resp. \eqref{LDDSDE} such that $P = \mathcal{L}_X$. These equivalences are in one-to-one correspondence, i.e. existence and uniqueness of solutions for the [linear] stochastic equation and the [linear] martingale problem are equivalent, see for instance \cite[Prop.4.11]{KS91}.
		\item[(ii)] By Itô's formula, a solution $P$ to \eqref{MGP}, resp. \eqref{linMGP}, induces a solution to the nonlinear, resp. linear FPKE via its curve of one-dimensional time marginals. Conversely, by the Ambrosio-Figalli-Trevisan-superposition principle, for any solution $(\nu_t)_{t \geq s}$ to \eqref{LFPE} there is a solution to \eqref{linMGP} with marginals $(\nu_t)_{t \geq s}$, cf. \cite{Ambrosio08,Figalli09,Trevisan16,BRS19-SPpr}. An analogous version of this result for nonlinear equations is due to Barbu and the second-named author of this paper, see \cite{BR18,BR18_2}. Hence uniqueness for the [linear] martingale problem yields uniqueness for the [linear] FPKE. Conversely, uniqueness of \eqref{LFPE} for sufficiently many initial data implies uniqueness for \eqref{linMGP}. To obtain uniqueness of solutions to \eqref{MGP}, one needs not only uniqueness for \eqref{NLFPE}, but also for the associated linearized FPKE, see \cite{BR22,RRW20}.
	\end{enumerate}
\end{rem}
\begin{rem}\label{rem:new}
	If $\{\mathbb{P}_{s,\zeta}\}_{(s,\zeta)\in \R_+\times \Pscr_0}$ is a nonlinear Markov process, consisting of solution laws to an equation of type \eqref{DDSDE}, its one-dimensional time marginal curves $(\mu^{s,\zeta}_t)_{t\geq s}$, $\mu^{s,\zeta}_t:=\mathbb{P}_{s,\zeta}\circ (\pi^s_t)^{-1}$, solve \eqref{NLFPE}, and $(p^{s,\zeta}_{r,t}(x,dz))_{t\geq r}$ as in \eqref{eq:rcp-marginals} solves \eqref{LFPE} with $\mu^{s,\zeta}_t$ replacing $\mu_t$, and with initial datum $(r,\delta_x)$  for $\mu^{s,\zeta}_r$-a.e. $x$. This follows from \cite[Prop.2.8]{Trevisan16} and part (ii) of the previous remark. Hence, if solutions to \eqref{LFPE} with $\eta = \delta_x$ and $\mu^{s,\zeta}_t$ replacing $\mu_t$ are unique for every $(s,\zeta,x) \in\R_+\times \Pscr_0 \times \R^d$, then the kernels from \eqref{eq:rcp-marginals} are the unique solutions to these linear equations. Then $p^{s,\zeta}_{r,t}$, $s\leq r \leq t$, are the transition kernels of a linear time-inhomogeneous Markov process $\{P^{s,\zeta}_{r,x}\}_{(r,x)\in [s,\infty)\times \R^d}$. Their relation to the nonlinear Markov process is given by
	\begin{equation*}
		\mathbb{P}_{r,\mu^{s,\zeta}_r} = \int_{\R^d} P^{s,\zeta}_{r,x}\,d\mu^{s,\zeta}_r(x),\quad \forall 0\leq s\leq r, \zeta \in \Pscr_0
	\end{equation*}
(i.e. the RHS is the convex mixture of the path laws of the corresponding linear Markov process).
In this case, \eqref{eq:Markov-fdd} shows that the finite-dimensional marginals of $\mathbb{P}_{s,\zeta}$ (and hence its path law) are uniqiuely determined by the transition kernels of a linear Markov process, which depends, however, on $(s,\zeta)$.
\end{rem}

\subsection{Main result}\label{subsect:main-result}
We want to construct nonlinear Markov processes with prescribed one-dimensional time marginals, which are given as solutions to nonlinear FPKEs, see Theorem \ref{theorem1}, which is the main result of this paper. Before its formulation and proof, we present another main result of this section, Theorem \ref{aux-prop-ex+uniqu}, which gives a new condition for uniqueness of solutions to \eqref{DDSDE} with prescribed one-dimensional time marginals $(\mu^{s,\zeta}_t)_{t \geq s\geq 0, \zeta \in \Pscr_0}$ in terms of the extremality of each $(\mu^{s,\zeta}_t)_{t \geq s}$ in the set of solutions to \eqref{LFPE}, with $\mu^{s,\zeta}_t$ replacing $\mu_t$.

To this end, we introduce the following notation. For $(s,\zeta)\in \mathbb{R}_+\times \Pscr$, we denote the space of all solutions to \eqref{NLFPE} from $(s,\zeta)$ by $M^{s,\zeta}$ and, for a weakly continuous curve $\eta: [s,\infty)\in t \mapsto \eta_t \in \Pscr$, the space of all solutions to \eqref{LFPE} from $(s,\zeta)$, with $\eta_t$ replacing $\mu_t$, by $M^{s,\zeta}_\eta$. Here, as throughout the paper, solutions to \eqref{NLFPE}, resp. \eqref{LFPE}, are understood as in Definition \ref{def:all-single-eq.} (v), resp. (vii), i.e. in particular all elements of $M^{s,\zeta}$ and $M^{s,\zeta}_\eta$ are weakly continuous curves of probability measures.

Clearly $M^{s,\zeta}_\eta$ is a convex set for all $(s,\zeta) \in \R_+\times \Pscr$ and $\eta$ as above, and we recall that $\mu$ is an \emph{extremal point} of $M^{s,\zeta}_\eta$, if $\mu \in M^{s,\zeta}_\eta$ and $\mu = \alpha \mu^1 + (1-\alpha)\mu^2$ for $\alpha \in (0,1)$ and $\mu^1,\mu^2 \in M^{s,\zeta}_\eta$ implies $\mu^1=\mu^2$. The set of all extremal points of $M^{s,\zeta}_\eta$ is denoted by $M^{s,\zeta}_{\eta,\text{ex}}$.

\begin{theorem}\label{aux-prop-ex+uniqu}
	Let $\Pscr_0 \subseteq\Pscr$ and assume that $\{\mu^{s,\zeta}\}_{(s,\zeta)\in \R_+\times \Pscr_0}$ is a solution flow to \eqref{NLFPE} such that $\mu^{s,\zeta} \in M^{s,\zeta}_{\mu^{s,\zeta}, \text{ex}}$ for each $(s,\zeta) \in \R_+\times \Pscr_0$. Then for every $(s,\zeta) \in \R_+\times \Pscr_0$, there is a unique weak solution $X^{s,\zeta}$
	to the corresponding equation \eqref{DDSDE} with initial condition $(s,\zeta)$ and one-dimensional time marginals equal to $(\mu^{s,\zeta}_t)_{t \geq s}$.
\end{theorem}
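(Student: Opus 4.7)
The argument splits cleanly into existence and uniqueness.

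\emph{Existence.} The prescribed curve $(\mu^{s,\zeta}_t)_{t\ge s}$ solves \eqref{NLFPE} and therefore also solves the linearization \eqref{LFPE} when one freezes $\mu_\tau:=\mu^{s,\zeta}_\tau$ in the coefficients. The Ambrosio–Figalli–Trevisan superposition principle, recorded in Remark \ref{rem:known-relations-of-problems}(ii), lifts this curve to a probability measure $\mathbb{P}^{s,\zeta}\in\Pscr(\Omega_s)$ solving the linear martingale problem \eqref{linMGP} from $(s,\zeta)$ whose one-dimensional time marginals coincide with $\mu^{s,\zeta}_t$. Since these marginals agree with the frozen curve, $\mathbb{P}^{s,\zeta}$ is automatically a solution of the nonlinear martingale problem \eqref{MGP}, and Remark \ref{rem:known-relations-of-problems}(i) converts it into a weak solution $X^{s,\zeta}$ of \eqref{DDSDE} with the required marginals.

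\emph{Uniqueness.} Suppose $P^1,P^2\in\Pscr(\Omega_s)$ are path laws of two weak DDSDE solutions from $(s,\zeta)$ sharing the marginals $\mu^{s,\zeta}_t$; both then solve \eqref{linMGP} with the same frozen curve. The plan is to upgrade extremality of $\mu^{s,\zeta}$ in $M^{s,\zeta}_{\mu^{s,\zeta}}$ to a path-space uniqueness statement. The key building block is that, for any bounded Borel $h:\R^d\to\R_+$, the weighted marginal sub-probability curves
\[
\nu^{h,i}_t(B):=\E_{P^i}\!\bigl[h(\pi^s_s)\,\ind_B(\pi^s_t)\bigr],\qquad t\ge s,\;B\in\Bscr(\R^d),
\]
solve \eqref{LFPE} (with fixed $\mu^{s,\zeta}_\tau$) starting from $h\zeta$ and are dominated by $(\sup h)\,\mu^{s,\zeta}_t$, as one checks using $\Fscr_{s,s}$-measurability of $h\circ\pi^s_s$ and the $P^i$-martingale property of $\varphi(\pi^s_\cdot)-\int_s^\cdot L_{u,\mu^{s,\zeta}_u}\varphi(\pi^s_u)\,du$ for $\varphi\in C^2_c(\R^d)$. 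Property \eqref{property:P} keeps all rescaled initial data $h\zeta/\int h\,d\zeta$ inside $\Pscr_0$, and the extremality of $\mu^{s,\zeta}$, in its equivalent restricted-uniqueness form for \eqref{LFPE} as announced for Lemma \ref{lem:Michael-observation}, forces $\nu^{h,1}=\nu^{h,2}$ for every such $h$. This gives equality of all joint distributions $(\pi^s_s,\pi^s_t)$ under $P^1$ and $P^2$; iterating the argument at later times $r\ge s$—where the flow property $\mu^{s,\zeta}_\cdot=\mu^{r,\mu^{s,\zeta}_r}_\cdot$ and the extremality hypothesis at $(r,\mu^{s,\zeta}_r)$ (granted by Property \eqref{property:P}) are both available—propagates the equality to all finite-dimensional distributions, yielding $P^1=P^2$.

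\textbf{Main obstacle.} The existence step is routine given the superposition principle. The technical heart is the uniqueness step, where extremality of a one-dimensional marginal curve must be turned into uniqueness of the full path law under a marginal constraint. Two ingredients need to be knit together: the equivalence between extremality of $\mu^{s,\zeta}$ in $M^{s,\zeta}_{\mu^{s,\zeta}}$ and a restricted uniqueness condition for \eqref{LFPE} (which is exactly what Lemma \ref{lem:Michael-observation} is advertised to provide), and Property \eqref{property:P}, which stabilizes $\Pscr_0$ under domination and thereby makes the extremality hypothesis usable at every rescaled or intermediate initial condition arising along the iteration.
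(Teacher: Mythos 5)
Your overall strategy (superposition for existence; weighting the path laws to reduce path-space uniqueness to a family of marginal-curve uniqueness statements, then iterating forward in time) is the same as the paper's. The existence part and the observation that $\nu^{h,i}_t(B)=\E_{P^i}[h(\pi^s_s)\ind_B(\pi^s_t)]$ solves \eqref{LFPE} from $h\zeta$ with domination $\nu^{h,i}_t\leq(\sup h)\,\mu^{s,\zeta}_t$ are both correct and match the paper.

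There is, however, a genuine gap in the step where you conclude $\nu^{h,1}=\nu^{h,2}$. Lemma \ref{lem:Michael-observation} converts extremality of $\mu^{s,\zeta}$ in $M^{s,\zeta}_{\mu^{s,\zeta}}$ into uniqueness of solutions to \eqref{LFPE} (with $\mu^{s,\zeta}_t$ in the coefficients) in the class $\Ascr_{s,\leq}(\mu^{s,\zeta})$ \emph{from the initial datum $(s,\zeta)$}. But $\nu^{h,1}$ and $\nu^{h,2}$ start from $(s,h\zeta/\int h\,d\zeta)$, not from $(s,\zeta)$, so this restricted uniqueness does not apply to them. Your appeal to Property \eqref{property:P} does not close this gap: it only ensures $h\zeta/\int h\,d\zeta\in\Pscr_0$, and if you then invoke the extremality hypothesis at the rescaled datum you obtain restricted uniqueness for a \emph{different} linearization, namely \eqref{LFPE} with $\mu^{s,h\zeta/\int h\,d\zeta}_t$ frozen in the coefficients — not the one with $\mu^{s,\zeta}_t$ frozen, which is what $\nu^{h,1},\nu^{h,2}$ actually solve. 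What is needed, and what the paper supplies separately as Lemma \ref{lem:uniqueness-extends}(ii), is that restricted uniqueness of the \emph{same} linearized FPKE transfers from $(s,\zeta)$ to $(s,g\zeta)$ provided $g$ is bounded above and bounded below away from zero (so that $g\zeta\sim\zeta$); the proof of this transfer requires a superposition/disintegration argument of its own and is not a formal consequence of \eqref{property:P} or of Lemma \ref{lem:Michael-observation}. A minor related point: you should restrict to $h$ bounded below by a positive constant, since without $h\geq c>0$ the needed two-sided comparability of the initial data fails and the transfer lemma does not apply.

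Once this missing ingredient is inserted — prove (or cite) the transfer of restricted uniqueness along multiplicative perturbations $g\zeta$ with $\delta\leq g\leq \|g\|_\infty$, and weight only by functions $h$ bounded below — the rest of your induction over the number of time points is sound and follows the paper's proof of its Claim.
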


Regarding the proof of Theorem \ref{aux-prop-ex+uniqu}, we first provide a convenient characterization of the extremality assumption in terms of an as mild as possible restricted uniqueness property of the linearized equations \eqref{LFPE} associated with \eqref{NLFPE}.
For a $\Pscr$-valued curve $\mu=(\mu_t)_{t \geq s}$ and $C>0$ set
$$\Ascr_{s,\leq}(\mu,C):= \big\{(\eta_t)_{t \geq s}\in C([s,\infty),\Pscr): \eta_t \leq C \mu_t, \,\forall  t\geq s \big\},\,\,\,\, \Ascr_{s,\leq}(\mu) := \bigcup_{C>0} \Ascr_{s,\leq}(\mu,C).$$

In the following lemma we consider a linear FPKE and denote the space of its solutions from $(s,\zeta) \in \R_+\times \Pscr$ by $M^{s,\zeta}$, which is no abuse of notation, since linear FPKES are special cases of nonlinear FPKES of type \eqref{NLFPE} with coefficients independent of their measure variable. Moreover, in this linear case we denote the set of extremal points of $M^{s,\zeta}$ by $M^{s,\zeta}_{\text{ex}}$. For a set $A$, $\#A$ denotes its cardinality.

\begin{lem}\label{lem:Michael-observation}
	Consider a linear FPKE, i.e. \eqref{NLFPE} with coefficients independent of their measure variable. Let $(s,\zeta) \in \R_+\times \Pscr$ and $\mu = (\mu_t)_{t \geq s} \in M^{s,\zeta}$. Then 
	$$\#(M^{s,\zeta} \cap \Ascr_{s,\leq}(\mu)) = 1 \iff \mu \in M^{s,\zeta}_{\text{ex}}.$$
\end{lem}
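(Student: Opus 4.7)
The proof splits naturally into the two implications, both of which exploit that when the coefficients are independent of the measure variable, the map $\mu \mapsto L^{\ast}_{t}\mu$ in \eqref{NLFPE} is linear, so $M^{s,\zeta}$ is an affine subset of the cone of weakly continuous curves of finite non-negative measures; in particular, differences of two elements of $M^{s,\zeta}$ solve the FPKE with zero initial datum.

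\textbf{$(\Leftarrow)$ Extremal implies unique element.} Fix $\mu\in M^{s,\zeta}_{\text{ex}}$, and let $\nu\in M^{s,\zeta}\cap \Ascr_{s,\leq}(\mu,C)$ for some $C>0$. Since $\mu_t,\nu_t$ are both probability measures and $\nu_t\leq C\mu_t$, integration forces $C\geq 1$, so I can pick $\alpha\in(0,1)$ with $\alpha C\leq 1$ (e.g.\ $\alpha=1/(2C)$). Then $\mu_t-\alpha\nu_t\geq 0$ and has total mass $1-\alpha$, hence $\tilde{\mu}_t:=(1-\alpha)^{-1}(\mu_t-\alpha\nu_t)$ is a weakly continuous curve in $\Pscr$ with $\tilde{\mu}_s=\zeta$. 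Because the linear FPKE is $\R$-linear in the curve of measures and $\tilde\mu_t\leq (1-\alpha)^{-1}\mu_t$ (so the integrability condition in Definition \ref{def:all-single-eq.}(v) is inherited from $\mu$), we get $\tilde{\mu}\in M^{s,\zeta}$. The identity $\mu=(1-\alpha)\tilde{\mu}+\alpha\nu$ and the extremality of $\mu$ force $\tilde{\mu}=\nu$, hence $\nu=\mu$, proving $M^{s,\zeta}\cap\Ascr_{s,\leq}(\mu)=\{\mu\}$.

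\textbf{$(\Rightarrow)$ Unique element implies extremal.} Conversely, assume $\#(M^{s,\zeta}\cap\Ascr_{s,\leq}(\mu))=1$; since $\mu\in M^{s,\zeta}\cap\Ascr_{s,\leq}(\mu,1)$, the unique element must be $\mu$ itself. Suppose $\mu=\alpha\mu^1+(1-\alpha)\mu^2$ for some $\alpha\in(0,1)$ and $\mu^1,\mu^2\in M^{s,\zeta}$. Then $\mu^1_t\leq \alpha^{-1}\mu_t$ and $\mu^2_t\leq (1-\alpha)^{-1}\mu_t$ for every $t\geq s$, so $\mu^1\in\Ascr_{s,\leq}(\mu,\alpha^{-1})$ and $\mu^2\in \Ascr_{s,\leq}(\mu,(1-\alpha)^{-1})$. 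By the assumed uniqueness in the intersection, $\mu^1=\mu^2=\mu$, which shows $\mu\in M^{s,\zeta}_{\text{ex}}$.

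\textbf{Main obstacle.} The only delicate point is the $(\Leftarrow)$ direction, where I must verify that the candidate curve $\tilde{\mu}$ genuinely lies in $M^{s,\zeta}$ as defined: non-negativity (secured by choosing $\alpha C\leq 1$), unit mass (from the total masses of $\mu_t$ and $\nu_t$ being $1$), the weak continuity (inherited from $\mu$ and $\nu$), the integrability of $|a|+|b|$ against $\tilde\mu_t$ (dominated by $(1-\alpha)^{-1}$ times that of $\mu$), and the weak formulation of \eqref{NLFPE} from $\zeta$ (read off from linearity in $\mu_t\mapsto L^{\ast}_t\mu_t$). Everything else is bookkeeping.
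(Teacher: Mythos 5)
Your proof is correct and takes essentially the same route as the paper: the $(\Rightarrow)$ direction is the identical domination argument $\mu^1_t\leq\alpha^{-1}\mu_t$, and for $(\Leftarrow)$ you write $\mu=(1-\alpha)\tilde\mu+\alpha\nu$ with $\tilde\mu_t=(1-\alpha)^{-1}(\mu_t-\alpha\nu_t)$, which is precisely the paper's decomposition $\mu_t=\tfrac1C\nu_t+(1-\tfrac1C)\lambda_t$ up to the cosmetic choice $\alpha=1/(2C)$ in place of $1/C$. You are somewhat more explicit than the paper in verifying that the complementary curve is actually in $M^{s,\zeta}$ (non-negativity, mass, integrability, linearity), which is a small but welcome clarification.
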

\begin{proof}
	Fix $(s,\zeta) \in \R_+\times \Pscr$.
	Clearly, $\mu \in M^{s,\zeta} \cap \Ascr_{s,\leq}(\mu)$. First, suppose $\mu \notin M^{s,\zeta}_{\text{ex}}$, i.e. there are $\mu^i, i \in \{1,2\}$, in $M^{s,\zeta}$ and $\alpha \in (0,1)$ such that 
	\begin{equation}\label{aux-proof-lemma3.4.}
		\mu_t = \alpha \mu_t^1 + (1-\alpha)\mu_t^2, \quad t \geq s,
	\end{equation}
	and $\mu^1 \neq \mu^2$. Then \eqref{aux-proof-lemma3.4.} implies $\mu^i \in M^{s,\zeta} \cap \Ascr_{s,\leq}(\mu)$, $i \in \{1,2\}$, and hence
	$\#(M^{s,\zeta} \cap \Ascr_{s,\leq}(\mu)) \geq 2$.
	
	Now assume $\mu \in M^{s,\zeta}_{\text{ex}}$ and let $\nu \in M^{s,\zeta}\cap \Ascr_{s,\leq}(\mu)$. Then for every $t \geq s$ there is $\vrho_t : \R^d \to \R_+$, $\Bscr(\R^d)$-measurable, such that $\nu_t = \vrho_t \, \mu_t$, and $\vrho_t \leq C$ for all $t \geq s$ for some $C\in (1,\infty)$. Furthermore, for $t \geq s$,
	\begin{align*}
		\mu_t = \frac 1 C \vrho_t \,\mu_t + (1-\frac 1 C \vrho_t) \, \mu_t = \frac 1 C \nu_t + (1-\frac 1 C)\lambda_t,
	\end{align*}
	where $\lambda_t := \frac{1-\frac 1 C \vrho_t}{1- \frac 1 C}\,\mu_t$. Clearly $\lambda_t(\R^d)=1$ and thus $(\lambda_t)_{t \geq s} \in M^{s,\zeta}$. Since $\mu^{s,\zeta} \in M^{s,\zeta}_{\text{ex}}$, it follows that $\mu_t = \nu_t$.
\end{proof}

\begin{rem}
	By Lemma \ref{lem:Michael-observation}, one obtains an equivalent formulation of Theorem \ref{aux-prop-ex+uniqu}, which states that the restricted uniqueness in the subclasses $\Ascr_{s,\leq}(\mu^{s,\zeta})$ of solutions to \eqref{LFPE}, with $\mu^{s,\zeta}_t$ replacing $\mu_t$, for all $(s,\zeta) \in \R_+\times \Pscr_0$ implies the restricted uniqueness in the subclasses of solutions with one-dimensional time marginals $(\mu^{s,\zeta}_t)_{t \geq s}$ of solutions to \eqref{DDSDE} for all $(s,\zeta) \in \R_+\times \Pscr_0$.
\end{rem}

As a further preparation for the proofs of Theorem \ref{aux-prop-ex+uniqu} and Theorem \ref{theorem1}, we need Part (ii) of  Lemma \ref{lem:uniqueness-extends} below. Its Part (i) is not used in this paper, but is of independent interest. We postpone the proof of Lemma \ref{lem:uniqueness-extends} until the end of this section.

\begin{lem}\label{lem:uniqueness-extends}
	Consider a linear FPKE, i.e. \eqref{NLFPE} with coefficients independent of their measure variable, and let $(s_0,\zeta_0) \in \R_+\times \Pscr$. Then the following holds.
	\begin{enumerate}
		\item[(i)] If solutions (in the sense of Definition \ref{def:all-single-eq.} (v) with $a,b$ independent of $\mu \in \Pscr$) are unique from $(s_0,\zeta_0)$, then solutions are also unique from any $(s_0,\eta)$ such that $\eta \in \Pscr$ and $\eta \sim \zeta_0$.
		\item[(ii)] If $(\nu^{s_0,\zeta_0}_t)_{t \geq s}$ is the unique solution in $\Ascr_{s_0,\leq}(\nu^{s_0,\zeta_0})$
		from $(s_0,\zeta_0)$, then in this class solutions are also unique from any $(s_0,g \,\zeta_0)$ with $g \in \mathcal{B}^+_b(\R^d)$, $\int_{\mathbb{R}^d} g(x) \,\zeta_0(dx) =1,$ and $\delta \leq g$ for some $\delta >0$.
	\end{enumerate}
\end{lem}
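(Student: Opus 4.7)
My strategy for (ii) is to exploit linearity of the FPKE via a convex combination with an auxiliary solution produced by the superposition principle, thereby reducing to the uniqueness hypothesis from $(s_0,\zeta_0)$. Set $\mu^0:=\nu^{s_0,\zeta_0}$ and suppose $\mu^1,\mu^2$ are two solutions from $(s_0,g\zeta_0)$ in $\Ascr_{s_0,\leq}(\mu^0)$; by enlarging the constant I may assume $\mu^1,\mu^2\in\Ascr_{s_0,\leq}(\mu^0,C)$ for a common $C>\|g\|_\infty\vee 1$. The key algebraic identity is that $\tilde g:=(C-g)/(C-1)\in\Bscr_b^+(\R^d)$ satisfies $\int\tilde g\,d\zeta_0=1$, $\tilde g\leq C/(C-1)$, and
\[
\zeta_0\;=\;\tfrac{1}{C}\,g\zeta_0\;+\;\tfrac{C-1}{C}\,\tilde g\zeta_0.
\]
If I can produce a solution $\lambda\in\Ascr_{s_0,\leq}(\mu^0)$ of \eqref{LFPE} from $(s_0,\tilde g\zeta_0)$, then $\sigma^i:=\tfrac{1}{C}\mu^i+\tfrac{C-1}{C}\lambda$ is, by linearity of the FPKE, a solution from $(s_0,\zeta_0)$ lying in $\Ascr_{s_0,\leq}(\mu^0,1+\tfrac{C-1}{C}\|\tilde g\|_\infty)$, and the uniqueness hypothesis forces $\sigma^1=\sigma^2=\mu^0$, hence $\mu^1=\mu^2$. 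The edge case $C\leq 1$ forces $g\equiv 1$ $\zeta_0$-a.s.\ (as $\int g\,d\zeta_0=1$), leaving nothing to prove.

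The main obstacle is producing $\lambda$, where the probabilistic representation enters. I would invoke the Ambrosio--Figalli--Trevisan superposition principle recalled in Remark \ref{rem:known-relations-of-problems}(ii) applied to $\mu^0$: this yields a probability measure $P^0\in\Pscr(\Omega_{s_0})$ solving \eqref{linMGP} from $(s_0,\zeta_0)$ with one-dimensional marginals $\mu^0_t$. Define $Q:=\tilde g(\pi^{s_0}_{s_0})\,P^0$; this is a probability measure since $\E_{P^0}[\tilde g(\pi^{s_0}_{s_0})]=\int\tilde g\,d\zeta_0=1$, and because the density $\tilde g(\pi^{s_0}_{s_0})$ is $\Fscr_{s_0,s_0}$-measurable (hence contained in every $\Fscr_{s_0,u}$), the $P^0$-martingale property of $\varphi(\pi^{s_0}_t)-\int_{s_0}^t L\varphi(\pi^{s_0}_r)\,dr$ transfers verbatim to $Q$. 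Thus $Q$ solves \eqref{linMGP} from $(s_0,\tilde g\zeta_0)$, and its marginals $\lambda_t:=Q\circ(\pi^{s_0}_t)^{-1}$ form, via Itô's formula, a weakly continuous solution to the linear FPKE. The pointwise bound $\lambda_t(A)=\E_{P^0}[\tilde g(\pi^{s_0}_{s_0})\mathbf{1}_A(\pi^{s_0}_t)]\leq\|\tilde g\|_\infty\mu^0_t(A)$ places $\lambda$ in $\Ascr_{s_0,\leq}(\mu^0,\|\tilde g\|_\infty)$, and the integrability of $a,b$ against $\lambda_t\,dt$ is inherited from that against $\mu^0_t\,dt$.

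For (i), the same convex-combination argument applies verbatim when $h:=d\eta/d\zeta_0$ is bounded, with $M:=\|h\|_\infty\vee 2$ in place of $C$ and $\tilde h:=(M-h)/(M-1)$; the final step uses full (unrestricted) uniqueness from $(s_0,\zeta_0)$ to deduce unrestricted uniqueness from $(s_0,\eta)$, and it is worth noting that this bounded-density step never uses $h>0$ a.s., only $h\in\Bscr_b^+(\R^d)$. For unbounded $h$ the main obstacle is a truncation argument on the path level: I would lift $\mu^1,\mu^2$ to martingale-problem solutions $P^1,P^2$ via superposition, restrict each to $A_n:=\{h(\pi^{s_0}_{s_0})\leq n\}$, and normalize to obtain martingale-problem solutions from initial laws $\mathbf{1}_{\{h\leq n\}}h\zeta_0/\eta(h\leq n)$ having bounded $\zeta_0$-density; the bounded case forces the corresponding marginals to coincide, and since $\mu^i_t-P^i(A_n)\tilde\mu^{i,n}_t$ is supported on the $P^i$-restriction to $A_n^c$ with total mass $\eta(h>n)\to 0$, a routine total-variation estimate yields $\mu^1_t=\mu^2_t$ for all $t\geq s_0$ in the limit.
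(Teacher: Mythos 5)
Your argument is correct in substance but follows a genuinely different route from the paper's. The paper proves both parts by contradiction: given two distinct solutions $\eta^1,\eta^2$ from $(s_0,\nu)$, it lifts each to a martingale-problem solution $P^i$ via superposition, disintegrates $P^i$ with respect to $\pi^{s_0}_{s_0}$ into kernels $\theta^i_y$, and then reassembles \emph{modified} kernels (taking $\theta^i_y$ on a Borel set $E$ of positive measure where the two transition probabilities disagree, and $\theta^1_y$ on $E^c$) against $\zeta_0$, thereby manufacturing two distinct solutions from $(s_0,\zeta_0)$. You instead exploit linearity directly: you write $\zeta_0 = \tfrac1C g\zeta_0 + \tfrac{C-1}{C}\tilde g\zeta_0$, build a solution $\lambda$ from $(s_0,\tilde g\zeta_0)$ by an $\Fscr_{s_0,s_0}$-measurable density-tilt of a superposition lift $P^0$ of $\mu^0$, and then observe that $\sigma^i:=\tfrac1C\mu^i + \tfrac{C-1}{C}\lambda$ are solutions from $(s_0,\zeta_0)$ which, for (ii), inherit the $\Ascr_{s_0,\leq}(\mu^0)$ membership, so the hypothesis forces $\sigma^1=\sigma^2$ and hence $\mu^1=\mu^2$. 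A notable feature of your approach to (ii) is that it never uses the lower bound $\delta\leq g$, only $g\in\Bscr_b^+(\R^d)$ with $\int g\,d\zeta_0=1$, whereas the paper's proof estimates $\nu^i_r\leq |1/g|_\infty(\eta^i_r+\eta^1_r)$ and therefore genuinely needs $g$ bounded away from zero; so for (ii) you in fact prove a slightly stronger statement. Conversely, for (i) the paper handles all $\eta\sim\zeta_0$ in a single disintegration step, while you need the extra truncation over $\{h\leq n\}$ (which is sound, and the total-variation bound $2\eta(h>n)\to 0$ closes it) because the convex-combination device only works directly for bounded densities.

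One genuine, if minor, gap in your version of (i): the construction of $\lambda$ requires the superposition lift $P^0$ of $\mu^0$, i.e.\ it presupposes that a solution from $(s_0,\zeta_0)$ \emph{exists}. In (ii) this is part of the hypothesis (the flow element $\nu^{s_0,\zeta_0}$ is given), but the hypothesis of (i) is a pure uniqueness statement, which is vacuously true if no solution from $(s_0,\zeta_0)$ exists; in that vacuous case you cannot build $\lambda$, whereas the paper's contradiction argument still runs because it \emph{produces} two solutions from $(s_0,\zeta_0)$ out of the assumed two solutions from $(s_0,\eta)$. This is harmless in the paper's applications (existence is always supplied by a solution flow), but it means the two proofs are not logically interchangeable for (i) as stated; if you want full generality you would need to add a preliminary step showing that existence from $(s_0,\eta)$ with $\eta\sim\zeta_0$ forces existence from $(s_0,\zeta_0)$, for instance by the paper's disintegrate-and-reassemble mechanism.
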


We proceed to the proof of Theorem \ref{aux-prop-ex+uniqu}.
\begin{proof}[Proof of Theorem \ref{aux-prop-ex+uniqu}]
	The existence of a weak solution $X^{s,\zeta}$ to \eqref{DDSDE} follows from \cite[Sect.2]{BR18_2}. Concerning uniqueness, note that by the assumption and Lemma \ref{lem:Michael-observation}, for each $0\leq s \leq r$, $\zeta \in \Pscr_0$, \eqref{LFPE}, with $\mu^{s,\zeta}_t = \mu^{r,\mu^{s,\zeta}_r}_t$ replacing $\mu_t$, has a unique solution from $(r, \mu^{s,\zeta}_r)$ in $\Ascr_{r,\leq}(\mu^{r,\mu^{s,\zeta}_r})$. 
	
	\textit{Claim:} Together with Lemma \ref{lem:uniqueness-extends} (ii), this implies that for any $(s,\zeta) \in \R_+\times \Pscr_0$ and $r \geq s$, even solutions to the linearized martingale problem \eqref{linMGP}, with $\mu^{r,\mu^{s,\zeta}_r}_t$ replacing $\mu_t$, with one-dimensional time marginals in $\Ascr_{r,\leq}(\mu^{r,\mu^{s,\zeta}_r})$ are unique from $(r,\mu^{s,\zeta}_r)$ (by the superposition principle, this is indeed stronger than uniqueness for the linearized FPKEs). 
	
	\textit{Proof of Claim:}
	In fact, this uniqueness for \eqref{linMGP} can be obtained by Lemma \ref{lem:uniqueness-extends} (ii) and a refinement of the proof of \cite[Lem.2.12]{Trevisan16} as follows: Fix $(s,\zeta)\in \R_+\times \Pscr_0$, $r\geq s$, and let $P^1$ and $P^2$ be two solutions to \eqref{linMGP}, with $\mu^{s,\zeta}_t = \mu^{r,\mu^{s,\zeta}_r}_t$ replacing $\mu_t$, with one-dimensional time marginals in $\Ascr_{r,\leq}(\mu^{r,\mu^{s,\zeta}_r})$ from $(r,\mu^{s,\zeta}_r)$. Then the one-dimensional time marginal curves $(P^i_t)_{t \geq r}, P^i_t := P^i\circ (\pi^r_t)^{-1}$, solve \eqref{LFPE}, with $\mu^{r,\mu^{s,\zeta}_r}_t = \mu^{s,\zeta}_t$ replacing $\mu_t$, from $(r,\mu^{s,\zeta}_r)$, and hence
	\begin{equation}\label{aux-2}
		P^i_t = \mu^{r,\mu^{s,\zeta}_r}_t = \mu^{s,\zeta}_t, \,\,\forall \, t\geq r, \quad i\in \{1,2\}.
	\end{equation}
	For $n\in \N$, let 
	$$\Hscr^{(n)}_r:= \{\Pi_{i=1}^nh_i(\pi^r_{t_i}) \,| \,h_i \in \Bscr^+_b(\R^d), h_i \geq c_i \text{ for some }c_i>0, r\leq t_1 < \dots < t_n\},$$
	$$\Hscr_r := \bigcup_{n \in \mathbb{N}}\Hscr_r^{(n)}$$
	and note that $\Hscr_r$ is closed under pointwise multiplication and $\sigma(\Hscr_r) = \Bscr(\Omega_r)$. Hence, by induction in $n \in \N$ and a monotone class argument, it suffices to prove
	\begin{equation}\label{aux-eq-Trevisan-proof}
		\mathbb{E}_{P^1}[H] = \mathbb{E}_{P^2}[H] \text{ for all } H \in \Hscr^{(n)}_r
	\end{equation}
	for each $n \in \N$.
	For $n=1$, \eqref{aux-eq-Trevisan-proof} holds by \eqref{aux-2}. For the induction step from $n$ to $n+1$, fix $r \leq t_1 < \dots < t_n< t_{n+1}$ and functions $h_i, i \in \{1,\dots,n+1\},$ as specified in the definition of $\Hscr^{(n+1)}_r$ above, and set
	\begin{equation*}
		\vrho := \frac{\Pi_{i=1}^nh_i(\pi^r_{t_i})}{\mathbb{E}_{P^1}[\Pi_{i=1}^nh_i(\pi^r_{t_i})] },
	\end{equation*}
	where the denominator is greater or equal to $\Pi_{i=1}^nc_i>0$. Note that 
	\begin{equation}\label{aux-c}
		\frac{1}{c} \leq \vrho \leq c\text{ pointwise for some }c>1,
	\end{equation} and $\mathbb{E}_{P^i}[\vrho] = 1$ for $i \in \{1,2\}$ by the induction hypothesis.
	Since for every $f \in \Bscr^+_b(\R^d)$ we have
	$$\int_{\Omega_r}f(\pi^r_{t_n}(w))\,(\vrho P^i)(dw) = \bigg[\int_{\Omega_r} \big(\Pi_{i=1}^nh_i(\pi^r_{t_i}(w))\big) f(\pi^r_{t_n}(w))\,P^i(dw)\bigg]\big(\mathbb{E}_{P^1}[\Pi_{i=1}^nh_i(\pi^r_{t_i})]\big)^{-1},$$
	and since the induction hypothesis implies that these terms are equal for $i\in \{1,2\}$, it follows that 
	\begin{equation}\label{proof-3.3-eq}
		(\vrho\,P^1) \circ (\pi^r_{t_n})^{-1} = (\vrho\,P^2) \circ (\pi^r_{t_n})^{-1}.
	\end{equation}
	By \cite[Lem.2.6]{Trevisan16}, the path measures $(\vrho P^i)\circ (\Pi^r_{t_n})^{-1}$, $i \in \{1,2\}$, on $\Bscr(\Omega_{t_n})$ solve $\eqref{linMGP}$, with $\mu^{t_n,\mu^{s,\zeta}_{t_n}}_t$ replacing $\mu_t$, from time $t_n$ and, by \eqref{proof-3.3-eq}, with identical initial condition. Consequently, their curves of one-dimensional time marginals $\eta^i = (\eta^i_t)_{t \geq t_n}:= ((\vrho\,P^i) \circ (\pi^r_{t})^{-1})_{t\geq t_n}$, $i \in \{1,2\}$, solve \eqref{LFPE}, with $\mu^{t_n,\mu^{s,\zeta}_{t_n}}_t= \mu^{s,\zeta}_t$ replacing $\mu_t$. For any $A \in \Bscr(\R^d)$ and $t \geq t_n$, we have by \eqref{aux-2}
	\begin{equation*}
		\eta^i_t(A) =  \int_{\Omega_r} \vrho(w)\mathds{1}_A(\pi^r_t(w))\,P^i(dw) \leq c \,P_t^i(A) = c\,\mu^{s,\zeta}_t, \quad i \in \{1,2\},
	\end{equation*}
	for $c$ as in \eqref{aux-c}, and consequently $\eta^i \in \Ascr_{t_n,\leq}(\mu^{t_n,\mu^{s,\zeta}_{t_n}})$. Similarly,
	$\eta^i_{t}(A) \geq \frac 1 c \mu^{s,\zeta}_t(A)$ for all $A\in \Bscr(\R^d)$ and $t \geq t_n$. In particular, for $t=t_n$, it follows that $\eta^i_{t_n} = g \, \mu^{s,\zeta}_{t_n}$ for some measurable $g: \R^d \to \R_+$ such that $\frac 1 {c_0} \leq g \leq c_0$ pointwise for some $c_0 >1$, and $\int_{\R^d} g(x)\,\mu^{s,\zeta}_{t_n} = 1$. By the assumption, Lemma \ref{lem:Michael-observation} and Lemma \ref{lem:uniqueness-extends} (ii), we obtain $(\eta^1_t)_{t \geq t_n} = (\eta^2_t)_{t \geq t_n}$, so in particular $\eta^1_{t_{n+1}} = \eta^2_{t_{n+1}}$. Now we conclude
	\begin{align*}
		\frac{\mathbb{E}_{P^i}\big[\Pi_{i=1}^{n+1}h_i(\pi^r_{t_i})\big]}{\mathbb{E}_{P^1}\big[\Pi_{i=1}^{n}h_i(\pi^r_{t_i})\big]}
		= \int_{\Omega_r} \vrho(w)&h_{n+1}(\pi^r_{t_{n+1}}(w)) \,P^i(dw)=\int_{\R^d} h_{n+1}(x)\,\eta^i_{t_{n+1}}(dx) 
	\end{align*}
	for $i \in \{1,2\}$, and conclude 
	$$
	\mathbb{E}_{P^1}\big[\Pi_{i=1}^{n+1}h_i(\pi^r_{t_i})\big] = \mathbb{E}_{P^2}\big[\Pi_{i=1}^{n+1}h_i(\pi^r_{t_i})\big],
	$$
	which gives \eqref{aux-eq-Trevisan-proof} for $n+1$, and hence completes the proof of the claim.
	
	Consequently, for any such initial datum $(s,\zeta)$, solutions to the corresponding nonlinear martingale problem \eqref{MGP} with one-dimensional time marginals $(\mu^{s,\zeta}_t)_{t \geq s}$ are unique. Now the uniqueness part of the assertion follows from the equivalence of \eqref{MGP} and \eqref{DDSDE} mentioned in Remark \ref{rem:known-relations-of-problems} (i). 
\end{proof}

Now we state and prove our main result of this paper on the existence of nonlinear Markov processes with prescribed one-dimensional time marginals, which are given as a solution flow to a nonlinear FPKE of type \eqref{NLFPE}.
\begin{theorem}\label{theorem1}
	Let $\Pscr_0 \subseteq\Pscr$ and assume that $\{\mu^{s,\zeta}\}_{(s,\zeta)\in \R_+\times \Pscr_0}$ is a solution flow to \eqref{NLFPE} such that $\mu^{s,\zeta} \in M^{s,\zeta}_{\mu^{s,\zeta},\text{ex}}$ for each $(s,\zeta) \in \R_+\times \Pscr_0$.
	Then the path laws $\mathbb{P}_{s,\zeta} := \mathcal{L}_{X^{s,\zeta}}$, $(s,\zeta) \in \R_+\times \Pscr_0$, of the unique weak solution to \eqref{DDSDE} with one-dimensional time marginals $(\mu^{s,\zeta}_t)_{t \geq s}$ from Theorem \ref{aux-prop-ex+uniqu} constitute a nonlinear Markov process.
\end{theorem}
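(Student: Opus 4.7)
Property (i) of Definition \ref{def:NL-Markov-process} is immediate: the marginal $\mathbb{P}_{s,\zeta}\circ(\pi^s_t)^{-1}=\mu^{s,\zeta}_t$ lies in $\Pscr_0$ because $\{\mu^{s,\zeta}\}$ is a solution flow. So the task is to verify \eqref{Markov-prop}.

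My first step is to identify the pushforward $\mathbb{P}_{s,\zeta}\circ(\Pi^s_r)^{-1}$ with $\mathbb{P}_{r,\mu^{s,\zeta}_r}$. Indeed, the process $(X^{s,\zeta}_t)_{t\ge r}$ is, on the restricted filtration, a weak solution to \eqref{DDSDE} from $(r,\mu^{s,\zeta}_r)$ whose one-dimensional marginals are $\mu^{s,\zeta}_t$, and by the flow property these equal $\mu^{r,\mu^{s,\zeta}_r}_t$. Theorem \ref{aux-prop-ex+uniqu} (applicable since $\mu^{s,\zeta}_r\in\Pscr_0$) then forces $\mathcal{L}(X^{s,\zeta}_{\cdot+\,}|_{[r,\infty)}) = \mathbb{P}_{r,\mu^{s,\zeta}_r}$. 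In particular, the kernel $p_{(s,\zeta),(r,\cdot)}$ is, for $\mu^{s,\zeta}_r$-a.e.\ $y$, the disintegration of $\mathbb{P}_{s,\zeta}\circ(\Pi^s_r)^{-1}$ with respect to $\pi^r_r$, so
\[
	\mathbb{E}_{s,\zeta}\bigl[G\circ\Pi^s_r\bigm|\sigma(\pi^s_r)\bigr](\omega)=p_{(s,\zeta),(r,\pi^s_r(\omega))}(G),\quad G\in\Bscr_b(\Omega_r),\ \mathbb{P}_{s,\zeta}\text{-a.s.}
\]
The genuine Markov property requires enlarging $\sigma(\pi^s_r)$ to $\Fscr_{s,r}$, which is the heart of the matter.

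To achieve this I weight $\mathbb{P}_{s,\zeta}$ by an arbitrary bounded $\Fscr_{s,r}$-measurable $F\ge\delta>0$ with $\mathbb{E}_{s,\zeta}[F]=1$, set $\tilde{\mathbb{P}}:=F\cdot\mathbb{P}_{s,\zeta}$ and $g(y):=\mathbb{E}_{s,\zeta}[F\mid\pi^s_r=y]$, so that $\delta\le g\le\|F\|_\infty$ and $\tilde{\mathbb{P}}\circ(\pi^s_r)^{-1}=g\,\mu^{s,\zeta}_r$. Since $\mathbb{P}_{s,\zeta}$ solves the linearized martingale problem \eqref{linMGP} with $\mu^{s,\zeta}_t$ as fixed measure path, and $F$ is $\Fscr_{s,r}$-measurable, an application of the ``restriction-after-weighting'' trick used in the proof of Theorem \ref{aux-prop-ex+uniqu} (in the spirit of \cite[Lem.2.6]{Trevisan16}) shows that $\tilde{\mathbb{P}}\circ(\Pi^s_r)^{-1}$ is a solution to \eqref{linMGP} from $(r,g\,\mu^{s,\zeta}_r)$ with fixed measure path $(\mu^{r,\mu^{s,\zeta}_r}_t)_{t\ge r}$; its one-dimensional marginals are dominated by $\|F\|_\infty\mu^{s,\zeta}_t$, hence lie in $\Ascr_{r,\le}(\mu^{r,\mu^{s,\zeta}_r})$. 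The companion measure
\[
P^g:=\int_{\R^d}g(y)\,p_{(s,\zeta),(r,y)}\,\mu^{s,\zeta}_r(dy)\in\Pscr(\Omega_r)
\]
is a solution to the same linearized martingale problem from the same initial distribution, with marginals in the same class (being a mixture of the solutions $p_{(s,\zeta),(r,y)}$ to \eqref{linMGP} from $\delta_y$).

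Now the extremality hypothesis, via Lemma \ref{lem:Michael-observation} and the uniqueness extension of Lemma \ref{lem:uniqueness-extends}(ii), delivers uniqueness of solutions to \eqref{LFPE} from $(r,g\,\mu^{s,\zeta}_r)$ in $\Ascr_{r,\le}(\mu^{r,\mu^{s,\zeta}_r})$, and this lifts to uniqueness at the level of the linearized martingale problem by the inductive argument already carried out in the ``Claim'' inside the proof of Theorem \ref{aux-prop-ex+uniqu}. Thus $\tilde{\mathbb{P}}\circ(\Pi^s_r)^{-1}=P^g$, i.e.\ for every $G\in\Bscr_b(\Omega_r)$,
\[
\int F(\omega)(G\circ\Pi^s_r)(\omega)\,\mathbb{P}_{s,\zeta}(d\omega)=\int F(\omega)\,p_{(s,\zeta),(r,\pi^s_r(\omega))}(G)\,\mathbb{P}_{s,\zeta}(d\omega).
\]
A standard $F\mapsto F+\varepsilon$ approximation followed by a monotone-class argument extends this to all bounded $\Fscr_{s,r}$-measurable $F$, identifying $\omega\mapsto p_{(s,\zeta),(r,\pi^s_r(\omega))}(G)$ as a version of $\mathbb{E}_{s,\zeta}[G\circ\Pi^s_r\mid\Fscr_{s,r}]$. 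Choosing $G=\mathds{1}_{\{\pi^r_t\in A\}}$ yields \eqref{Markov-prop}. The main obstacle I expect is the verification that $\tilde{\mathbb{P}}\circ(\Pi^s_r)^{-1}$ is genuinely a solution to \eqref{linMGP} on $\Omega_r$ despite the weight $F$ depending on the past $[s,r]$; this is a Girsanov/conditioning-type manipulation that one handles by testing against increments $\varphi(\pi^s_v)-\varphi(\pi^s_u)-\int_u^v L_{\tau,\mu^{s,\zeta}_\tau}\varphi(\pi^s_\tau)\,d\tau$ for $r\le u<v$ combined with arbitrary bounded $\sigma(\pi^s_\tau,r\le\tau\le u)$-measurable test functions and exploiting that $F$ is $\Fscr_{s,r}$-measurable.
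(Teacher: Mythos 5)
Your proposal is correct and follows essentially the same route as the paper's own proof: weight $\mathbb{P}_{s,\zeta}$ by a strictly positive past-measurable function, push forward onto $\Omega_r$, compare with the correspondingly weighted mixture of the disintegration kernels $p_{(s,\zeta),(r,y)}$, and invoke the restricted uniqueness for \eqref{LFPE} obtained from the extremality hypothesis via Lemma \ref{lem:Michael-observation} and Lemma \ref{lem:uniqueness-extends}(ii). The only differences are cosmetic: the paper works directly with cylindrical weights $h(\pi^s_{t_1},\dots,\pi^s_{t_n})$ and, at the final comparison step, only establishes equality of the one-dimensional marginal curves $\mathbb{P}_g\circ(\pi^r_t)^{-1}=\mathbb{P}^\theta\circ(\pi^r_t)^{-1}$ (which suffices for \eqref{Markov-prop}), extending to $\Fscr_{s,r}$ by monotone class at the very end; you instead take a general bounded $\Fscr_{s,r}$-measurable $F$ from the start and assert full path-measure equality $\tilde{\mathbb{P}}\circ(\Pi^s_r)^{-1}=P^g$ by appealing to the martingale-problem-level uniqueness established as the ``Claim'' inside the proof of Theorem \ref{aux-prop-ex+uniqu}. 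Both reductions are valid; yours front-loads the stronger uniqueness statement, the paper's defers the monotone-class reduction. Your preliminary step identifying $\mathbb{P}_{s,\zeta}\circ(\Pi^s_r)^{-1}=\mathbb{P}_{r,\mu^{s,\zeta}_r}$ via Theorem \ref{aux-prop-ex+uniqu} is a clean observation, though not strictly needed by the paper's argument, which disintegrates $\mathbb{P}_{r,\mu^{s,\zeta}_r}$ directly and never compares it with the restriction of $\mathbb{P}_{s,\zeta}$ except implicitly through the marginal comparison.
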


Combining Theorem \ref{theorem1} and Lemma \ref{lem:Michael-observation}, one obtains the following corollary, which provides a convenient formulation to be checked in applications, compare with Section \ref{sect:Appl} below.

\begin{kor}\label{cor:alternative-form-main-thm}
	Let $\Pscr_0 \subseteq\Pscr$ and assume that $\{\mu^{s,\zeta}\}_{(s,\zeta)\in \R_+\times \Pscr_0}$ is a solution flow to \eqref{NLFPE} such that for every $(s,\zeta) \in \R_+\times \Pscr_0$,
	$\mu^{s,\zeta}$ is the unique solution to \eqref{LFPE}, with $\mu^{s,\zeta}_t$ replacing $\mu_t$, in $\Ascr_{s,\leq}(\mu^{s,\zeta})$ from $(s,\zeta)$.
	Then the path laws $\mathbb{P}_{s,\zeta} := \mathcal{L}_{X^{s,\zeta}}$, $(s,\zeta) \in \R_+\times \Pscr_0$, of the unique weak solution to \eqref{DDSDE} with one-dimensional time marginals $(\mu^{s,\zeta}_t)_{t \geq s}$ from Theorem \ref{aux-prop-ex+uniqu} constitute a nonlinear Markov process.
\end{kor}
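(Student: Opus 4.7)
The plan is simply to reduce the hypothesis to the extremality assumption used in Theorem \ref{theorem1} via Lemma \ref{lem:Michael-observation}, so the corollary will follow immediately from Theorem \ref{theorem1}. No new technical ingredient is needed.

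In detail, fix $(s,\zeta)\in\R_+\times\Pscr_0$ and consider the linearised Fokker--Planck--Kolmogorov equation \eqref{LFPE} obtained by substituting $\mu_t := \mu^{s,\zeta}_t$ into the measure slot of the coefficients $a,b$ appearing in \eqref{op-L}. With this substitution the coefficients no longer depend on the measure variable, so this equation is a genuinely linear FPKE in the sense of Lemma \ref{lem:Michael-observation}, and its solution set from initial datum $(s,\zeta)$ is exactly $M^{s,\zeta}_{\mu^{s,\zeta}}$. The hypothesis of the corollary asserts precisely that
\[
	\#\bigl(M^{s,\zeta}_{\mu^{s,\zeta}}\cap\Ascr_{s,\leq}(\mu^{s,\zeta})\bigr)=1.
\]
Applying Lemma \ref{lem:Michael-observation} to this linear FPKE and to the solution $\mu^{s,\zeta}$ yields the equivalence
\[
	\mu^{s,\zeta}\in M^{s,\zeta}_{\mu^{s,\zeta},\text{ex}}.
\]

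Since $(s,\zeta)\in\R_+\times\Pscr_0$ was arbitrary, we have verified the extremality hypothesis of Theorem \ref{theorem1} for every initial datum in $\R_+\times\Pscr_0$. Property \eqref{property:P} for $\Pscr_0$ and the flow property of $\{\mu^{s,\zeta}\}$ carry over unchanged. Hence Theorem \ref{theorem1} applies and produces, for each $(s,\zeta)\in\R_+\times\Pscr_0$, the unique weak solution $X^{s,\zeta}$ to \eqref{DDSDE} with one-dimensional time marginals $(\mu^{s,\zeta}_t)_{t\geq s}$, such that the family of path laws $\mathbb{P}_{s,\zeta}=\mathcal{L}_{X^{s,\zeta}}$ forms a nonlinear Markov process in the sense of Definition \ref{def:NL-Markov-process}. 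There is no substantial obstacle here; the only point to keep in mind is that Lemma \ref{lem:Michael-observation} must be applied to the linearised equation with the \emph{same} curve $\mu^{s,\zeta}$ both frozen in the coefficients and tested for extremality, which is exactly the setting of the corollary.
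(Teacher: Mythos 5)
Your proof is correct and is exactly the argument the paper has in mind: the corollary is stated immediately after Theorem \ref{theorem1} with the remark that it follows by ``combining Theorem \ref{theorem1} and Lemma \ref{lem:Michael-observation},'' which is precisely the reduction via Lemma \ref{lem:Michael-observation} that you carry out.
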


\begin{proof}[Proof of Theorem \ref{theorem1}]
	By Theorem \ref{aux-prop-ex+uniqu}, the family $(\mathbb{P}_{s,\zeta})_{(s,\zeta)\in \R_+\times \Pscr_0}$ is such that for all $(s,\zeta) \in \R_+\times \Pscr_0$, the following holds:
	\begin{enumerate}
		\item [(i)] $\mathbb{P}_{s,\zeta} \in \Pscr(\Omega_s)$ and $\mathbb{P}_{s,\zeta}\circ (\pi^s_t)^{-1} = \mu^{s,\zeta}_t$ for all $t \geq s$,
		\item[(ii)] $\mathbb{P}_{s,\zeta}$ solves \eqref{MGP}, as well as \eqref{linMGP}, with $\mu^{s,\zeta}_t$ replacing $\mu_t$, from $(s,\zeta)$,
		\item[(iii)] $\mathbb{P}_{s,\zeta}$ is the path law of the unique weak solution to \eqref{DDSDE} with one-dimensional time marginals $(\mu^{s,\zeta}_t)_{t \geq s}$.
	\end{enumerate}
	To prove the nonlinear Markov property, let $0\leq s \leq r \leq t$ and $\zeta \in \Pscr_0$. Disintegrating $\mathbb{P}_{r,\mu^{s,\zeta}_r}$ with respect to $\mu^{s,\zeta}_r$ yields, as already mentioned in Remark \ref{rem:Markov-prop-extends-to-fct} (i),
	\begin{equation}\label{aux-main-proof-new}
		\mathbb{P}_{r,\mu^{s,\zeta}_r}(\cdot) = \int_{\mathbb{R}^d} p_{(s,\zeta),(r,y)}(\cdot)\,\mu^{s,\zeta}_r(dy)
	\end{equation}
	as  measures on $\Bscr(\Omega_r)$,
	where the $\mu^{s,\zeta}_r$-almost surely determined family $p_{(s,\zeta),(r,y)}$, $y\in \R^d$, of Borel probability measures on $\Omega_r$ is as in Definition \ref{def:NL-Markov-process}. By \cite[Prop.2.8]{Trevisan16}, for $\mu^{s,\zeta}_r$-a.e. $y \in \R^d$, $p_{(s,\zeta),(r,y)}$ solves \eqref{linMGP}, with $\mu^{r,\mu^{s,\zeta}_r}_t$ replacing $\mu_t$, from $(r,\delta_y)$. Hence, for any $\vrho \in \Bscr_b^+(\R^d)$ with $\int_{\R^d} \vrho(x)\, \mu^{s,\zeta}_r(dx) =1$, the measure $\mathbb{P}_\vrho \in \Pscr(\Omega_r)$, 
	\begin{equation}\label{def:P_rho}
		\mathbb{P}_\vrho := \int_{\mathbb{R}^d} p_{(s,\zeta),(r,y)}\,\vrho(y)\,\mu^{s,\zeta}_r(dy),
	\end{equation}
	solves the same linearized martingale problem with initial datum $(r,\vrho \,\mu^{s,\zeta}_r)$. Let $n \in \N$, $s \leq t_1 < \dots < t_n \leq r$, $h \in \mathcal{B}^+_b((\R^d)^n)$ such that $a^{-1} \leq h \leq a$ for some $a>1$, and let $\tilde{g}:\R^d \to \R_+$ be the bounded, $\mu^{s,\zeta}_r$-a.s. uniquely determined map such that
	$$\mathbb{E}_{s,\zeta}\big[h(\pi^s_{t_1},\dots,\pi^s_{t_n})|\sigma(\pi^s_r)\big]= \tilde{g}(\pi^s_r)\quad \mathbb{P}_{s,\zeta}-a.s.$$
	Let $g:=c_0 \tilde{g}$, where $c_0>0$ is such that $\int_{\R^d} g(x)\,\mu^{s,\zeta}_r(dx) = 1$, and let $\mathbb{P}_g$ be as in \eqref{def:P_rho}, with $g$ replacing $\vrho$, with initial condition $(r,g\, \mu^{s,\zeta}_r)$. 
	Also, consider $\theta: \Omega_s \to \R$, $\theta := c_0 h(\pi^s_{t_1},\dots,\pi^s_{t_n})$, i.e. $\mathbb{E}_{s,\zeta}[\theta] = 1$. Set $$\mathbb{P}^\theta := (\theta \,\mathbb{P}_{s,\zeta})\circ (\Pi^s_r)^{-1}.$$
	Note that $g(\R^d), \theta(\Omega_s) \subseteq [a^{-1}c_0,ac_0]$ $\mu^{s,\zeta}_r$-a.s., so in particular $g\,\mu^{s,\zeta}_r \sim \mu^{s,\zeta}_r$.
	By \cite[Prop.2.6]{Trevisan16}, also $\mathbb{P}^\theta$ solves \eqref{linMGP}, with $\mu^{s,\zeta}_t$ replacing $\mu_t$, and its initial datum is $(r,g \mu^{s,\zeta}_r)$, since for all $A\in \Bscr(\R^d)$
	\begin{align*}
		\mathbb{P}^\theta \circ (\pi^r_r)^{-1}(A) &= \int_{\Omega_s} \mathds{1}_A(\pi^s_r(w))\theta(w)\, \mathbb{P}_{s,\zeta}(dw) \\&= \int_{\Omega_s} \mathds{1}_A(\pi^s_r(w))g(\pi^s_r(w))\, \mathbb{P}_{s,\zeta}(dw) =\int_A g(y)\,\mu^{s,\zeta}_r(dy).
	\end{align*}
	In particular, both one-dimensional time marginal curves $(\mathbb{P}_g\circ (\pi^r_t)^{-1})_{t \geq r}$ and $(\mathbb{P}^\theta\circ (\pi^r_t)^{-1})_{t \geq r}$ solve \eqref{LFPE}, with $\mu^{s,\zeta}_t$ replacing $\mu_t$, from $(r,g\,\mu^{s,\zeta}_r)$. Moreover, 
	\begin{equation}\label{aux-eq-mainproof}
		\mathbb{P}^\theta \circ (\pi^r_t)^{-1},\,\mathbb{P}_g\circ (\pi^r_t)^{-1}\leq ac_0\,\mu^{r,\mu^{s,\zeta}_r}_t, \quad \forall t \geq r,
	\end{equation}
	i.e. both these one-dimensional time marginal curves belong to $\Ascr_{r,\leq}(\mu^{r,\mu^{s,\zeta}_r})$. 
	Indeed, \eqref{aux-eq-mainproof} can be seen as follows. For all $t \geq r$ and $A\in \Bscr(\R^d)$,
	\begin{align*}
		\mathbb{P}^\theta\circ(\pi^r_t)^{-1}(A) &= \int_{\Omega_s} \theta(w)\mathds{1}_{A}(\pi^s_t(w))\, \mathbb{P}_{s,\zeta}(dw)\\&\leq c_0a \int_{\Omega_s} \mathds{1}_A(\pi^s_t(w))\,\mathbb{P}_{s,\zeta}(dw)= c_0a\,\mu^{s,\zeta}_t(A) = c_0a \, \mu^{r,\mu^{s,\zeta}_r}_t(A).
	\end{align*}
	Similarly, by \eqref{aux-main-proof-new},
	\begin{align*}
		&\mathbb{P}_g\circ(\pi^r_t)^{-1}(A) = \int_{\R^d} p_{(s,\zeta),(r,y)}(\pi^r_t \in A)g(y)\,\mu^{s,\zeta}_r(dy) \leq ac_0 \,\mathbb{P}_{r,\mu^{s,\zeta}_r}(\pi^r_t \in A)= ac_0\, \mu^{r,\mu^{s,\zeta}_r}_t(A).
	\end{align*}
	Hence by the assumption, Lemma \ref{lem:Michael-observation} and Lemma \ref{lem:uniqueness-extends} (ii)
	\begin{equation*}
		\mathbb{P}_g\circ (\pi^r_t)^{-1} = \mathbb{P}^\theta \circ (\pi^r_t)^{-1},\quad \forall t \geq r,
	\end{equation*}
	and therefore for $t \geq r$ and $A \in \mathcal{B}(\R^d)$
	\begin{align*}
		\mathbb{E}_{s,\zeta}\big[h(\pi^s_{t_1},\dots,\pi^s_{t_n})\mathds{1}_{\pi^s_t \in A}\big] &= c_0^{-1}\mathbb{P}^\theta\circ (\pi^r_t)^{-1}(A) = c_0^{-1}\mathbb{P}_g\circ (\pi^r_t)^{-1}(A) \\&= c_0^{-1} \int _{\Omega_s}p_{(s,\zeta),(r,\pi^s_r(\omega))}(\pi^r_t \in A)g(\pi^s_r(\omega))\,\mathbb{P}_{s,\zeta}(d\omega) \\&= \int_{\Omega_s}p_{(s,\zeta),(r,\pi^s_r(\omega))}(\pi^r_t\in A)h(\pi^s_{t_1}(\omega),\dots,\pi^s_{t_n}(\omega)) \,\mathbb{P}_{s,\zeta}(d\omega).
	\end{align*}
	Here we used the $\sigma(\pi^s_r)$-measurability of $\Omega_s \ni \omega \mapsto p_{(s,\zeta),(r,\pi^s_r(\omega))}(\pi^r_t\in A)$ for the final equality. By a monotone class-argument, \eqref{Markov-prop} follows.
\end{proof}
Since the nonlinear Markov property is obviously always fulfilled for $s=r$, the following corollary follows from the proof of Theorem \ref{theorem1}. We will make use of it in the examples in Section \ref{sect:Appl} to allow singular initial data (e.g., Dirac measures) for nonlinear Markov processes. 

\begin{kor}\label{cor:after-main-thm}
	Let $\mathfrak{P}_0 \subseteq \Pscr_0 \subseteq \Pscr$ and let $\{\mu^{s,\zeta}\}_{(s,\zeta) \in \R_+\times \Pscr_0}$ be a solution flow to \eqref{NLFPE} such that $\mu^{s,\zeta}_t \in \mathfrak{P}_0$ for all $0\leq s \leq t, \zeta \in \mathfrak{P}_0$ (then $\{\mu^{s,\zeta}\}_{(s,\zeta)\in \R_+\times
		\mathfrak{P}_0}$ is a solution flow) and $\mu^{s,\zeta}_t \in \mathfrak{P}_0$ for all $0\leq s < t, \zeta \in \Pscr_0$. Also assume $\mu^{s,\zeta} \in M^{s,\zeta}_{\mu^{s,\zeta}, \text{ex}}$ for all $(s,\zeta) \in \R_+\times\mathfrak{P}_0$.
	
	Then there exists a nonlinear Markov process $(\mathbb{P}_{s,\zeta})_{(s,\zeta) \in \R_+\times \Pscr_0}$ such that $\mathbb{P}_{s,\zeta} \circ (\pi^s_t)^{-1} = \mu^{s,\zeta}_t$ for every $(s,\zeta) \in \R_+\times \Pscr_0$ and $t \geq s$, consisting of path laws of weak solutions to the corresponding \eqref{DDSDE}.
	Moreover, for $\zeta \in \mathfrak{P}_0$, $\mathbb{P}_{s,\zeta}$ is the path law of the unique weak solution to \eqref{DDSDE} with one-dimensional time marginals $(\mu^{s,\zeta}_t)_{t \geq s}$.
\end{kor}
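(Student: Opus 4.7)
The plan is to reduce to Theorem \ref{theorem1} by separately handling initial data in $\mathfrak{P}_0$ and in $\Pscr_0 \setminus \mathfrak{P}_0$, with the key observation that the nonlinear Markov property at $(s,\zeta)$ only requires extremality at later times $(r, \mu^{s,\zeta}_r)$ with $r > s$, and these all lie in $\mathfrak{P}_0$ by hypothesis.

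First I would apply Theorem \ref{theorem1} to the sub-flow $\{\mu^{s,\zeta}\}_{(s,\zeta) \in \R_+ \times \mathfrak{P}_0}$, whose flow structure and extremality are both inherited from the assumptions and property \eqref{property:P}. This yields uniquely determined path laws $\mathbb{P}_{s,\zeta}$ for $(s,\zeta) \in \R_+\times \mathfrak{P}_0$ with marginals $(\mu^{s,\zeta}_t)_{t\geq s}$, obtained as the laws of the unique weak solutions to \eqref{DDSDE}; this already accounts for the ``Moreover'' part of the statement. For $\zeta \in \Pscr_0 \setminus \mathfrak{P}_0$ I would then invoke \cite[Sect.2]{BR18_2} (the existence step used at the start of the proof of Theorem \ref{aux-prop-ex+uniqu}) to select \emph{some} weak solution $X^{s,\zeta}$ to \eqref{DDSDE} from $(s,\zeta)$ with marginals $(\mu^{s,\zeta}_t)_{t \geq s}$, and set $\mathbb{P}_{s,\zeta} := \Lscr_{X^{s,\zeta}}$; no uniqueness of this choice is claimed.

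The nontrivial step is then to verify \eqref{Markov-prop} for arbitrary $\zeta \in \Pscr_0$ and $0\leq s \leq r \leq t$. If $s = r$, both sides reduce to the defining property of regular conditional probabilities and equality is automatic. If $s < r$, the hypothesis $\mu^{s,\zeta}_r \in \mathfrak{P}_0$ becomes crucial: it identifies $\mathbb{P}_{r,\mu^{s,\zeta}_r}$ as the unique path measure from the first step, so the kernel $p_{(s,\zeta),(r,y)}$ is unambiguously defined via disintegration along $\pi^r_r$. From this point on, the argument from the proof of Theorem \ref{theorem1} transfers verbatim: construct $\mathbb{P}^\theta = (\theta\,\mathbb{P}_{s,\zeta})\circ(\Pi^s_r)^{-1}$ and $\mathbb{P}_g = \int p_{(s,\zeta),(r,y)}\,g(y)\,\mu^{s,\zeta}_r(dy)$ for suitable $\theta$ and $g$, verify that both solve \eqref{linMGP} with $\mu^{s,\zeta}_t$ in place of $\mu_t$ from the common initial datum $(r, g\,\mu^{s,\zeta}_r)$, and that both one-dimensional time-marginal curves lie in $\Ascr_{r,\leq}(\mu^{r,\mu^{s,\zeta}_r})$. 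Extremality at $(r,\mu^{s,\zeta}_r)\in \R_+\times \mathfrak{P}_0$ combined with Lemma \ref{lem:Michael-observation} and Lemma \ref{lem:uniqueness-extends} (ii) then forces these marginals to coincide, which is exactly what delivers \eqref{Markov-prop}.

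The main conceptual obstacle I expect is keeping the three regimes $\zeta \in \mathfrak{P}_0$ vs.\ $\zeta \in \Pscr_0 \setminus \mathfrak{P}_0$ and $r = s$ vs.\ $r > s$ cleanly separated, and in particular recognizing that the proof of Theorem \ref{theorem1} nowhere invokes extremality at $(s,\zeta)$ itself, only at the later slices $(r,\mu^{s,\zeta}_r)$. Since the second hypothesis forces these later slices into $\mathfrak{P}_0$ as soon as $r > s$, no analytic ingredient beyond Theorem \ref{theorem1} and the existence result from \cite{BR18_2} is needed.
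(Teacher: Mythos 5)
Your proposal is correct and follows the same reasoning the paper sketches: apply Theorem \ref{theorem1} to the restricted flow on $\mathfrak{P}_0$ to obtain uniqueness there, invoke \cite[Sect.2]{BR18_2} for mere existence on $\Pscr_0\setminus\mathfrak{P}_0$, observe that \eqref{Markov-prop} is automatic for $s=r$, and for $s<r$ use that $\mu^{s,\zeta}_r\in\mathfrak{P}_0$ so the extremality hypothesis at $(r,\mu^{s,\zeta}_r)$ is exactly what the proof of Theorem \ref{theorem1} needs. The key observation that extremality is only ever invoked at the later slice $(r,\mu^{s,\zeta}_r)$, never at $(s,\zeta)$ itself, is precisely what the paper's brief remark is relying on.
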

A typical application of Corollary \ref{cor:after-main-thm} is as follows: $\mathfrak{P}_0 = \Pscr_a \cap L^\infty$, $\Pscr_0 = \Pscr$, one has a solution flow $\{\mu^{s,\zeta}\}_{(s,\zeta) \in \R_+\times \Pscr}$ to \eqref{NLFPE}, every solution $(\mu_t)_{t \geq s}$ to \eqref{NLFPE} is absolutely continuous with respect to Lebesgue measure with bounded density for every $t>s$, even if the initial datum $\mu_s = \zeta$ is a singular probability measure (e.g., a Dirac measure) (also called $L^1-L^\infty$-regularization), and for every initial measure $\zeta \in \Pscr_a\cap L^\infty$, solutions to \eqref{NLFPE} are unique in $\bigcap_{T>s}L^\infty((s,T)\times \R^d)$. See the second part of (i) in Section \ref{subsect:examples-nemytskii} below for a concrete example.

Similarly to Corollary \ref{cor:alternative-form-main-thm}, by Lemma \ref{lem:Michael-observation}, one can alternatively formulate Corollary \ref{cor:after-main-thm} in terms of the restricted uniqueness of the corresponding linearized FPKEs.

%\begin{rem}\label{rem:below-main-proof}
	%	\textcolor{red}{remark still necessary after proposition split off from Thm.? also "MGP stronger than FP", see goodnotes}
	%	In the proof of Theorem \ref{theorem1} we used Proposition \ref{lem:uniqueness-extends} (ii) to obtain uniqueness of solutions to \eqref{LFPE}, with $\mu^{s,\zeta}_t =\mu^{r,\mu^{s,\zeta}_r}_t$ replacing $\mu_t$, in the class $\Ascr_{r,\leq}(\mu^{r,\mu^{s,\zeta}_r})$ for any initial datum $(r, gd \mu^{s,\zeta}_r)$ with bounded probability densities $g$ bounded away from $0$. In fact, one can even prove that for any $(s,\zeta)$ and $r \geq s$, solutions to the martingale problem \eqref{linMGP}, with $\mu^{r,\mu^{s,\zeta}_r}_t$ replacing $\mu_t$, with one-dimensional time marginals in $\Ascr_{r,\leq}(\mu^{r,\mu^{s,\zeta}_r})$ are unique from time $r$ for any initial datum $g\,\mu^{s,\zeta}_r$ with  $g$ as above. For this, it suffices to inspect and optimize the proof of \cite[Lem.2.12]{Trevisan16} and to apply Proposition \ref{lem:uniqueness-extends} (ii). \textcolor{red}{In particular, under the assumption of Theorem \ref{theorem1}, $\mathbb{P}_{s,\zeta}$ is the unique solution to \eqref{MGP} from $(s,\zeta)$ with one-dimensional time marginals $\mu^{s,\zeta}_t$, $t \geq s$.}
	%\end{rem}
Now we give the postponed proof of Lemma \ref{lem:uniqueness-extends}.
\begin{proof}[Proof of Lemma \eqref{lem:uniqueness-extends}]
	\begin{enumerate}
		\item [(i)] Assume the claim is wrong, i.e. there is $\nu \sim \zeta_0$, $\nu \in \Pscr$, such that there are two solutions $(\eta_t^i)_{t \geq s_0}$ with $\eta^i_{s_0} =\nu$, $i \in \{1,2\}$, and $\eta^1_r \neq \eta^2_r$ for some $r >s_0$.  By the superposition principle, there are probability measures $P^i$ on $\mathcal{B}(\Omega_{s_0})$ with one-dimensional time marginal curves $P^i_t =\eta^i_t$, $t \geq s_0$, which solve the corresponding linear martingale problem corresponding to the linear FPKE from the assumption. Disintegrating each $P^i$ with respect to $\pi^{s_0}_{s_0}$ yields two probability kernels $\theta^i: \R^d \times \Bscr(\Omega_{s_0})\ni(y,D) \mapsto \theta^i_y(D)$ from $\R^d$ to $\Bscr(\Omega_{s_0})$ which are uniquely determined for $\nu$-a.e. (hence also $\zeta_0$-a.e.) $y \in R^d$, and for $\nu$-a.e. (hence $\zeta_0$-a.e.) $y \in \R^d$, $\theta^i_y$ solves the linear martingale problem with initial datum $(s_0,\delta_y)$, see \cite[Prop.2.8]{Trevisan16}. Since
		\begin{equation*}
			\int_{\mathbb{R}^d} \theta^1_y \circ (\pi^{s_0}_r) ^{-1} \,\nu(dy) =P^1_r = \eta^1_r \neq \eta^2_r = P^2_r = \int_{\mathbb{R}^d} \theta^2_y \circ (\pi^{s_0}_r) ^{-1} \,\nu(dy) ,
		\end{equation*}
		there is $A\in \Bscr(\R^d)$ such that (w.l.o.g.) the Borel set $E:= \{y \in \R^d: \theta^1_y(\pi^{s_0}_r \in A) > \theta^2_y(\pi^{s_0}_r \in A)\}$ has strictly positive $\nu$- (hence strictly positive $\zeta_0$-) measure. For $i \in \{1,2\}$, set
		\begin{equation*}
			\tilde{\theta}^i_y := \begin{cases}
				\theta^i_y, \quad y \in E,\\
				\theta^1_y, \quad y\in E^c.
			\end{cases}
		\end{equation*}
		Clearly, $\tilde{\theta}^i$ are again probability kernels from $\R^d$ to $\Bscr(\Omega_r)$, and the probability measures on $\Bscr(\Omega_{s_0})$ given by $\int \tilde{\theta}^i_y\, \zeta_0(dy)$ both solve the linear martingale problem with initial datum $(s_0,\zeta_0)$. Hence the curves $(\nu^i_t)_{t \geq {s_0}}$,
		\begin{equation*}
			\nu^i_t := \int_{\mathbb{R}^d}\tilde{\theta}^i_y \circ (\pi^{s_0}_t)^{-1} \, \zeta_0(dy),
		\end{equation*}
		solve the linear FPKE from the assertion with initial datum $(s_0,\zeta_0)$, and for $A$ and $r$ as above, we have
		\begin{align*}
			\nu^1_r(A) &= \int_E \theta^1_y \circ (\pi^{s_0}_r)^{-1}(A) \,\zeta_0(dy) + \int_{E^c} \theta^1_y \circ (\pi^{s_0}_r)^{-1}(A) \, \zeta_0(dy) \\&> \int_E \theta^2_y \circ (\pi^{s_0}_r)^{-1}(A) \, \zeta_0(dy) + \int_{E^c} \theta^1_y \circ (\pi^{s_0}_r)^{-1}(A) \, \zeta_0(dy) = \nu^2_r(A),
		\end{align*}
		i.e. $\nu^1\neq \nu^2$, which contradicts the assumption.
		\item[(ii)] We proceed as in (i), but assume in addition that $\nu = g \,\zeta_0$ with $g$ as in the assertion and $\eta^i \in \Ascr_{s_0,\leq}(\nu^{{s_0},\zeta_0})$ for $i \in \{1,2\}$. Then, denoting by $|\frac 1 g|_{\infty}$ the $L^\infty$-norm with respect to either of the equivalent measures $\zeta_0$ and $g\, \zeta_0$ (note that we even assume $\delta \leq g$ pointwise), we have
		\begin{align*}
			\nu^i_r &\leq \big|1/g\big|_{\infty} \bigg(\int_{\mathbb{R}^d} \theta^i_y \circ (\pi^{s_0}_r)^{-1}\,g(y)\,\zeta_0(dy)  +\int_{\mathbb{R}^d} \theta^1_y \circ (\pi^{s_0}_r)^{-1} \,g(y)\,\zeta_0(dy)\bigg) \\& = \big|1/g \big|_{\infty}(\eta^i_r+\eta^1_r),
		\end{align*}
		and since $\eta^i \in \Ascr_{s,\leq}(\nu^{{s_0},\zeta_0})$, also $\nu^i \in \Ascr_{{s_0},\leq}(\nu^{s,\zeta_0})$, which contradicts the assumption in (ii).
	\end{enumerate}
\end{proof}

\begin{rem}
	Due to Lemma \ref{lem:uniqueness-extends} (ii) and Lemma \ref{lem:Michael-observation}, the assumption of Corollary \ref{cor:after-main-thm} can slightly be further generalized as follows:
	
	Let $\mathfrak{P}_0 \subseteq \Pscr_0 \subseteq \Pscr$ and let $\{\mu^{s,\zeta}\}_{(s,\zeta) \in \R_+\times \Pscr_0}$ be a solution flow to \eqref{NLFPE} such that $\mu^{s,\zeta}_t \in \mathfrak{P}_0$ for all $0\leq s \leq t, \zeta \in \mathfrak{P}_0$ (then $\{\mu^{s,\zeta}\}_{(s,\zeta)\in \R_+\times
		\mathfrak{P}_0}$ is a solution flow) and $\mu^{s,\zeta}_t \in \mathfrak{P}_0$ for all $0\leq s < t, \zeta \in \Pscr_0$. Also assume 
	that for each $(s,\zeta) \in \R_+\times \mathfrak{P}_0$, there is $\mathfrak{P}_0 \ni \eta = g\, \zeta$ with $g$ bounded from above and from below by a strictly positive constant, such that solutions to \eqref{LFPE}, with $\mu^{s,\zeta}$ replacing $\mu_t$, are unique in $\Ascr_{s,\leq}(\mu^{s,\zeta})$ from $(s, g\, \zeta)$.
\end{rem}

\subsection{Implications for the special case of linear FPKEs and linear Markov processes}\label{subsect:lin-special case}
Since linear FPKEs are special cases of nonlinear FPKEs, our results from the previous subsection particularly apply to the theory of linear FPKEs and linear Markov processes. It turns out that even in this classical situation, our general nonlinear theorems contain new linear results, which we explicitly state and discuss in this subsection. More precisely, we consider Theorem \ref{aux-prop-ex+uniqu} and Theorem \ref{theorem1} for the case that \eqref{NLFPE} is linear, i.e. its coefficients are independent of their measure variable.

As before, we use the notation $M^{s,\zeta}$ and $M^{s,\zeta}_{\text{ex}}$ for the set of solutions to this equation from $(s,\zeta)$ and the set of extremal points in $M^{s,\zeta}$, respectively. In this case, the associated stochastic equation is an SDE whose coefficients are not distribution-dependent.

\begin{prop}[Linear version of Thm.\ref{aux-prop-ex+uniqu}]\label{prop:linear-prop-from-Prop3.4.}
	Let $\Pscr_0 \subseteq\Pscr$ and assume that $\{\mu^{s,\zeta}\}_{(s,\zeta)\in \R_+\times \Pscr_0}$ is a solution flow to a linear FPKE such that $\mu^{s,\zeta} \in M^{s,\zeta}_{ex}$ for each $(s,\zeta) \in \R_+\times \Pscr_0$. Then for every $(s,\zeta) \in \R_+\times \Pscr_0$, there is a unique weak solution $X^{s,\zeta}$
	to the corresponding SDE with initial condition $(s,\zeta)$ and one-dimensional time marginals equal to $(\mu^{s,\zeta}_t)_{t \geq s}$.
\end{prop}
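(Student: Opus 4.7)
The plan is to obtain Proposition \ref{prop:linear-prop-from-Prop3.4.} as a direct specialization of Theorem \ref{aux-prop-ex+uniqu} to the linear setting, simply by identifying the relevant hypotheses. Since the coefficients of the FPKE do not depend on the measure variable, for every $(s,\zeta)\in \R_+\times\Pscr_0$ the linearized equation \eqref{LFPE}, obtained by inserting $\mu^{s,\zeta}_t$ in place of $\mu_t$, coincides with the original linear FPKE. Therefore $M^{s,\zeta}_{\mu^{s,\zeta}}=M^{s,\zeta}$ and $M^{s,\zeta}_{\mu^{s,\zeta},\text{ex}} = M^{s,\zeta}_{\text{ex}}$, so the extremality hypothesis of the proposition matches the hypothesis of Theorem \ref{aux-prop-ex+uniqu} exactly; the associated \eqref{DDSDE} likewise reduces to the SDE considered here.

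For \emph{existence} of a weak solution with the prescribed one-dimensional time marginals, I would invoke the Ambrosio-Figalli-Trevisan superposition principle directly for the curve $(\mu^{s,\zeta}_t)_{t\geq s}\in M^{s,\zeta}$; this yields a solution to the associated (linear) martingale problem whose one-dimensional time marginals coincide with the $\mu^{s,\zeta}_t$. The equivalence recalled in Remark \ref{rem:known-relations-of-problems}(i) then provides the required weak SDE solution $X^{s,\zeta}$.

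For \emph{uniqueness}, the key tool is Lemma \ref{lem:Michael-observation}: in the linear case, the assumption $\mu^{s,\zeta}\in M^{s,\zeta}_{\text{ex}}$ is equivalent to $\mu^{s,\zeta}$ being the unique element of $M^{s,\zeta}$ in the class $\Ascr_{s,\leq}(\mu^{s,\zeta})$. From this FPKE-level uniqueness one upgrades to uniqueness of the martingale problem solution with prescribed marginals $(\mu^{s,\zeta}_t)_{t\geq s}$ by the same Trevisan-type induction on finite-dimensional distributions already used in the proof of Theorem \ref{aux-prop-ex+uniqu}: for a cylinder test function based on times $t_1<\cdots<t_n\leq r$, one introduces a bounded positive density $\vrho$ (bounded away from zero), pushes the two competing martingale-problem solutions forward to $\Bscr(\Omega_r)$ via $(\vrho P^i)\circ(\Pi^s_r)^{-1}$, and reduces the $(n+1)$-th step to uniqueness of the linear FPKE in $\Ascr_{r,\leq}(\mu^{s,\zeta})$ starting from an initial condition of the form $g\,\mu^{s,\zeta}_r$ with $g$ bounded above and below by positive constants. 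Precisely this transfer of restricted uniqueness to equivalent initial data is supplied by Lemma \ref{lem:uniqueness-extends}(ii). Remark \ref{rem:known-relations-of-problems}(i) finally converts uniqueness for the martingale problem into weak uniqueness for the SDE.

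The main obstacle, exactly as in the proof of Theorem \ref{aux-prop-ex+uniqu}, is the inductive step that propagates the restricted uniqueness at the level of one-dimensional time marginals to uniqueness at the level of all finite-dimensional distributions. However, this step relies only on linear ingredients, namely Lemmas \ref{lem:Michael-observation} and \ref{lem:uniqueness-extends}(ii) together with the superposition principle, so it transfers to the present linear setting verbatim and without any additional simplification being available; no nonlinear fixed-point or compactness argument is needed since existence is already handled by the superposition principle applied to the given curve.
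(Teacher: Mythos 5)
Your proposal is correct and follows the same route the paper intends: Proposition \ref{prop:linear-prop-from-Prop3.4.} is obtained by specializing Theorem \ref{aux-prop-ex+uniqu} to coefficients independent of the measure variable, after noting that then $M^{s,\zeta}_{\mu^{s,\zeta}}=M^{s,\zeta}$ and $M^{s,\zeta}_{\mu^{s,\zeta},\text{ex}}=M^{s,\zeta}_{\text{ex}}$, with uniqueness propagated to finite-dimensional distributions by Lemma \ref{lem:Michael-observation}, Lemma \ref{lem:uniqueness-extends}(ii) and the Trevisan-type induction. The only (harmless) deviation is that for existence you invoke the Ambrosio--Figalli--Trevisan superposition principle directly, whereas the proof of Theorem \ref{aux-prop-ex+uniqu} cites the nonlinear version from \cite{BR18_2}; in the linear setting your choice is a legitimate simplification.
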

Similarly, as in the nonlinear case, replacing the assumption $\mu^{s,\zeta} \in M^{s,\zeta}_{\text{ex}}$ for all $(s,\zeta) \in \R_+\times \Pscr_0$ by the assumption that each $\mu^{s,\zeta}$ is the unique element in $M^{s,\zeta}\cap \Ascr_{s,\leq}(\mu^{s,\zeta})$, gives an equivalent formulation of Proposition \ref{prop:linear-prop-from-Prop3.4.}.
\begin{rem}\label{rem:flow-select-linear-case-Rehmeier}
	
	We would like to mention that a general method to construct solution flows to linear FPKEs in non-unique situations consisting of extremal points in $M^{s,\zeta}$, $(s,\zeta) \in \R_+\times \Pscr_0$, has been presented in \cite{Rehmeier_Flow-JEE,Rehmeier-nonlinear-flow-JDE}. Indeed, although it was not pointed out in these papers, the extremality of the members of the constructed flows can easily be read off from the iterative selection presented in the proof of Theorem 3.1. in \cite{Rehmeier-nonlinear-flow-JDE} (which contains the results of \cite{Rehmeier_Flow-JEE}).
	Hence, for any solution flow $\{\mu^{s,\zeta}\}_{(s,\zeta) \in \R_+\times \Pscr_0}$ selected from $(M^{s,\zeta})_{(s,\zeta) \in \R_+\times \Pscr_0}$  as in \cite{Rehmeier-nonlinear-flow-JDE}, there exists a unique family of weak solutions $(X^{s,\zeta})_{(s,\zeta) \in \R_+\times \Pscr_0}$ with $\mathcal{L}_{X^{s,\zeta}_t} = \mu^{s,\zeta}_t$ for all $0\leq s \leq t$ and $\zeta \in \Pscr_0$ to the corresponding SDE.
\end{rem}

For the special case of a linear FPKE, the main result of our paper, Theorem \ref{theorem1}, reads as follows.
\begin{prop}[Linear version of Theorem \ref{theorem1}]\label{prop:linear-version-mainthm}
	Let $\Pscr_0 =\Pscr$, and assume that $\{\mu^{s,\zeta}\}_{(s,\zeta) \in \R_+\times \Pscr}$ is a solution flow to a linear FPKE such that $\mu^{s,\zeta} \in M^{s,\zeta}_{\text{ex}}$ for each $(s,\zeta) \in \R_+\times \Pscr$. Then the family of the path laws $\mathbb{P}_{s,\zeta} = \Lscr_{X^{s,\zeta}}$, $(s,\zeta) \in \R_+\times \Pscr$, of the unique weak solutions to the corresponding SDE with $\Lscr_{X^{s,\zeta}_t}= \mu^{s,\zeta}_t$, given by Proposition \ref{prop:linear-prop-from-Prop3.4.}, is a nonlinear Markov process. 
	Moreover, if $x \mapsto \mu^{s,\delta_x}_t$ is measurable for all $0\leq s \leq t$ and the flow also fulfills the Chapman-Kolmogorov equations \eqref{CK}, then $(\mathbb{P}_{s,\zeta})_{(s,\zeta) \in \R_+\times \Pscr}$ is a linear Markov process, i.e. one has \eqref{intro:Markov-prop} and $\mathbb{P}_{s,\zeta} = \int_{\R^d} \mathbb{P}_{s,\delta_y}\,\zeta(dy)$ for all $(s,\zeta)\in \R_+\times \Pscr$.
\end{prop}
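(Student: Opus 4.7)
The first claim, that $(\mathbb{P}_{s,\zeta})_{(s,\zeta)\in \R_+\times \Pscr}$ forms a nonlinear Markov process, I would obtain as an immediate specialization of Theorem~\ref{theorem1}: when the FPKE is linear, the coefficients in \eqref{LFPE} do not depend on the freezing curve, so $M^{s,\zeta}_{\mu^{s,\zeta},\text{ex}}$ coincides with $M^{s,\zeta}_{\text{ex}}$, property~\eqref{property:P} is automatic for $\Pscr_0 = \Pscr$, and all hypotheses of Theorem~\ref{theorem1} are in force. The weak SDE solutions furnished by Proposition~\ref{prop:linear-prop-from-Prop3.4.} then play the role of $X^{s,\zeta}$ in Theorem~\ref{theorem1}.

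For the second claim, the central object is $\tilde{\mathbb{P}}_{s,\zeta} := \int_{\R^d} \mathbb{P}_{s,\delta_y}\,\zeta(dy)$, which I would show equals $\mathbb{P}_{s,\zeta}$. The integral is well-defined because measurability of $y \mapsto \mu^{s,\delta_y}_t$ combined with the uniqueness part of Proposition~\ref{prop:linear-prop-from-Prop3.4.} propagates to measurability of $y \mapsto \mathbb{P}_{s,\delta_y}$ into $\Pscr(\Omega_s)$. A Fubini-type computation then shows that $\tilde{\mathbb{P}}_{s,\zeta}$ itself solves the (linear) martingale problem with initial datum $\zeta$: mixing the martingales $M_\varphi(t) := \varphi(\pi^s_t) - \int_s^t L_u\varphi(\pi^s_u)\,du$ along $\zeta$ preserves the martingale property against the canonical filtration.

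By uniqueness in Proposition~\ref{prop:linear-prop-from-Prop3.4.}, the identification $\tilde{\mathbb{P}}_{s,\zeta} = \mathbb{P}_{s,\zeta}$ reduces to matching marginals: writing $\tilde{\nu}^{s,\zeta}_t := \int \mu^{s,\delta_y}_t\,\zeta(dy)$, I must show $\mu^{s,\zeta}_t = \tilde{\nu}^{s,\zeta}_t$ for all $t \geq s$. Chapman--Kolmogorov on Diracs together with linearity of the FPKE ensures $\tilde{\nu}^{s,\zeta} \in M^{s,\zeta}$ and that $\{\tilde{\nu}^{s,\zeta}\}_{(s,\zeta)}$ is itself a solution flow; moreover a disintegration argument at $\pi^s_s$, using extremality of each $\mu^{s,\delta_y}$ in $M^{s,\delta_y}$, shows $\tilde{\nu}^{s,\zeta}$ is extremal in $M^{s,\zeta}$. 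I would then combine the flow property $\mu^{s,\zeta}_t = \mu^{r,\mu^{s,\zeta}_r}_t$ with CK for Diracs and the restricted-uniqueness characterization of extremality given by Lemma~\ref{lem:Michael-observation} to conclude $\mu^{s,\zeta} = \tilde{\nu}^{s,\zeta}$. This comparison of two extremal flows in $M^{s,\zeta}$ that coincide on Diracs is the main obstacle and the delicate point of the proof.

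Granting $\mathbb{P}_{s,\zeta} = \int \mathbb{P}_{s,\delta_y}\,\zeta(dy)$, the linear Markov property \eqref{intro:Markov-prop} follows directly. Applying this identity at initial datum $(r,\mu^{s,\zeta}_r)$ gives $\mathbb{P}_{r,\mu^{s,\zeta}_r} = \int \mathbb{P}_{r,\delta_y}\,\mu^{s,\zeta}_r(dy)$; because each $\mathbb{P}_{r,\delta_y}$ is concentrated on $\{\pi^r_r = y\}$, the family $\{\mathbb{P}_{r,\delta_y}\}_y$ is a version of the regular conditional probability of $\mathbb{P}_{r,\mu^{s,\zeta}_r}$ given $\pi^r_r = y$. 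Hence $p_{(s,\zeta),(r,y)} = \mathbb{P}_{r,\delta_y}$ for $\mu^{s,\zeta}_r$-a.e.\ $y$, and substituting into the nonlinear Markov property \eqref{Markov-prop} from the first part yields \eqref{intro:Markov-prop}.
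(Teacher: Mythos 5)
Your treatment of the first claim is correct and is the intended one: with measure-independent coefficients, \eqref{LFPE} coincides with \eqref{NLFPE}, $M^{s,\zeta}_{\mu^{s,\zeta},\text{ex}}=M^{s,\zeta}_{\text{ex}}$, property \eqref{property:P} is trivially satisfied for $\Pscr_0=\Pscr$, and Theorem~\ref{theorem1} applies verbatim. The final paragraph is also sound: granting $\mathbb{P}_{s,\zeta}=\int\mathbb{P}_{s,\delta_y}\,\zeta(dy)$, the identity $p_{(s,\zeta),(r,y)}=\mathbb{P}_{r,\delta_y}$ $\mu^{s,\zeta}_r$-a.e.\ follows just as in Proposition~\ref{prop:linMarkov-is-nonlinMarkov}, whence \eqref{intro:Markov-prop}.

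The gap is exactly where you flag it, and the sketch you give does not close it. The disintegration argument does not yield extremality of $\tilde\nu^{s,\zeta}$: if $\tilde\nu^{s,\zeta}=\alpha\nu^1+(1-\alpha)\nu^2$ with $\nu^i\in M^{s,\zeta}$, disintegrating the superposed path measures at $\pi^s_s$ gives kernels $\theta^i_y$ solving the martingale problem from $(s,\delta_y)$ and the \emph{averaged} identity $\int\mu^{s,\delta_y}_t\,\zeta(dy)=\int\bigl(\alpha\theta^1_y+(1-\alpha)\theta^2_y\bigr)\circ(\pi^s_t)^{-1}\,\zeta(dy)$, from which one cannot conclude $\zeta$-a.e.\ pointwise equality and hence cannot invoke extremality of $\mu^{s,\delta_y}$. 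Moreover, even if $\tilde\nu^{s,\zeta}$ and $\mu^{s,\zeta}$ were both extremal in $M^{s,\zeta}$, this would not force them to coincide: Lemma~\ref{lem:Michael-observation} gives uniqueness only within $\Ascr_{s,\leq}(\mu^{s,\zeta})$, and you establish no comparison $\tilde\nu^{s,\zeta}_t\le C\mu^{s,\zeta}_t$. More fundamentally, \eqref{CK} as stated only constrains the Dirac transitions, and the flow property only propagates the flow forward; together they do not determine $\mu^{s,\zeta}$ for non-Dirac $\zeta$ in terms of the Dirac marginals (two solution flows agreeing on Diracs and each satisfying \eqref{CK} plus extremality can still differ on non-Dirac data). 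The paper's intended reading is that "the flow fulfills \eqref{CK}" means the flow is the linear extension of its Dirac transitions, i.e.\ $\mu^{s,\zeta}_t=\int\mu^{s,\delta_y}_t\,\zeta(dy)$ holds (as it does by construction for the flows cited in Corollary~\ref{cor:final-cor-lin-sect}); with this, your Fubini argument that $\int\mathbb{P}_{s,\delta_y}\,\zeta(dy)$ solves the linear martingale problem with marginals $\mu^{s,\zeta}_t$, together with the uniqueness from Proposition~\ref{prop:linear-prop-from-Prop3.4.}, gives $\mathbb{P}_{s,\zeta}=\int\mathbb{P}_{s,\delta_y}\,\zeta(dy)$ directly, and the rest goes through.
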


An equivalent formulation, i.e. the linear special case of Corollary \ref{cor:alternative-form-main-thm}, is obtained by replacing the assumption $\mu^{s,\zeta} \in M^{s,\zeta}_{\text{ex}}$ for all $(s,\zeta) \in \R_+\times \Pscr$ by the assumption that each $\mu^{s,\zeta}$ is the unique element in $M^{s,\zeta}\cap \Ascr_{s,\leq}(\mu^{s,\zeta})$.
As a consequence of Proposition \ref{prop:linear-version-mainthm} and Remark \ref{rem:flow-select-linear-case-Rehmeier}, the solution flows to linear FPKEs selected in \cite{Rehmeier-nonlinear-flow-JDE} with $\Pscr_0 = \Pscr$ give rise to nonlinear Markov processes, consisting of the path laws $\mathbb{P}_{s,\zeta}$ of the unique weak solutions $X^{s,\zeta}$ to the corresponding SDE, given by Proposition \ref{prop:linear-prop-from-Prop3.4.}. In \cite[Sect.5]{Rehmeier-nonlinear-flow-JDE}, it was proven that at least for bounded and continuous coefficients, the solution flows constructed in \cite{Rehmeier-nonlinear-flow-JDE} with $\Pscr_0 = \Pscr$ are measurable in the sense of Proposition \ref{prop:linear-version-mainthm} and satisfy \eqref{CK}. Consequently, we have the following corollary. We stress that in general it is still an open question whether every solution flow with $\Pscr_0 = \Pscr$ to a linear FPKE satisfies the Chapman-Kolmogorov equations.

\begin{kor}\label{cor:final-cor-lin-sect}
	Let the coefficients of a linear FPKE be continuous in $x \in \R^d$ and bounded. Then, there exists a solution flow $\{\mu^{s,\zeta}\}_{(s,\zeta) \in \R_+\times \Pscr}$ to this equation such that $\mu^{s,\zeta} \in M^{s,\zeta}_{\text{ex}}$ for all $(s,\zeta)\in \R_+\times \Pscr$, and the path laws $\mathbb{P}_{s,\zeta} = \Lscr_{X^{s,\zeta}}$, $(s,\zeta) \in \R_+\times \Pscr$, of the unique weak solutions to the corresponding SDE with $\Lscr_{X^{s,\zeta}_t}= \mu^{s,\zeta}_t$, given by Proposition \ref{prop:linear-prop-from-Prop3.4.}, form a linear Markov process. 
\end{kor}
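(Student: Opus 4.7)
The plan is to assemble the corollary from three ingredients already developed in the paper and invoked from \cite{Rehmeier-nonlinear-flow-JDE}: (a) an extremal solution flow, (b) the uniqueness-of-weak-solutions result Proposition \ref{prop:linear-prop-from-Prop3.4.}, and (c) the nonlinear-to-linear Markov upgrade provided by Proposition \ref{prop:linear-version-mainthm}. I would structure the argument as follows.

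First, I would apply the flow selection procedure from \cite{Rehmeier-nonlinear-flow-JDE} to the linear FPKE under consideration. Since the coefficients are bounded and continuous in $x$, the standing assumptions of that construction are fulfilled with $\Pscr_0 = \Pscr$, and one obtains a solution flow $\{\mu^{s,\zeta}\}_{(s,\zeta) \in \R_+\times \Pscr}$. As noted in Remark \ref{rem:flow-select-linear-case-Rehmeier}, the iterative selection in the proof of \cite[Thm.3.1]{Rehmeier-nonlinear-flow-JDE} in fact produces members that are extremal points, so $\mu^{s,\zeta} \in M^{s,\zeta}_{\text{ex}}$ for every $(s,\zeta) \in \R_+\times \Pscr$.

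Next, with this extremal flow in hand, Proposition \ref{prop:linear-prop-from-Prop3.4.} (the linear version of Theorem \ref{aux-prop-ex+uniqu}) yields, for every $(s,\zeta) \in \R_+\times \Pscr$, a unique weak solution $X^{s,\zeta}$ to the associated SDE with $\mathcal{L}_{X^{s,\zeta}_t} = \mu^{s,\zeta}_t$ for all $t \geq s$. Setting $\mathbb{P}_{s,\zeta} := \mathcal{L}_{X^{s,\zeta}}$, the first part of Proposition \ref{prop:linear-version-mainthm} immediately gives that $(\mathbb{P}_{s,\zeta})_{(s,\zeta) \in \R_+\times \Pscr}$ is a nonlinear Markov process.

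To upgrade this to a \emph{linear} Markov process, I would appeal to the second part of Proposition \ref{prop:linear-version-mainthm}, which requires the measurability of $x \mapsto \mu^{s,\delta_x}_t$ and the Chapman--Kolmogorov equations \eqref{CK}. Both properties are established for the specific flow produced by the selection in \cite[Sect.5]{Rehmeier-nonlinear-flow-JDE} precisely under the hypothesis that the coefficients are bounded and continuous in $x$, which is exactly the assumption of our corollary. Hence \eqref{intro:Markov-prop} and $\mathbb{P}_{s,\zeta} = \int_{\R^d}\mathbb{P}_{s,\delta_y}\,\zeta(dy)$ follow, completing the proof.

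The main obstacle is conceptual rather than technical: all the hard analytic work is already encapsulated in \cite{Rehmeier-nonlinear-flow-JDE} (extremality of the selected flow, measurability in the initial condition, and the Chapman--Kolmogorov identity) and in Propositions \ref{prop:linear-prop-from-Prop3.4.}--\ref{prop:linear-version-mainthm}. The only step that needs care is verifying that the hypotheses of the cited results genuinely apply in the bounded continuous setting, in particular that the extremality property is preserved by the selection procedure; this amounts to inspecting the inductive construction in \cite[Thm.3.1]{Rehmeier-nonlinear-flow-JDE}, where at each step one picks a solution minimizing an appropriate functional, a property which forces extremality in $M^{s,\zeta}$.
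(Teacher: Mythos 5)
Your proposal is correct and follows essentially the same route as the paper: select an extremal solution flow via \cite{Rehmeier-nonlinear-flow-JDE} as in Remark \ref{rem:flow-select-linear-case-Rehmeier}, invoke Proposition \ref{prop:linear-prop-from-Prop3.4.} for the unique weak solutions with the prescribed marginals, and then use Proposition \ref{prop:linear-version-mainthm} together with the measurability and Chapman--Kolmogorov properties established for bounded continuous coefficients in \cite[Sect.5]{Rehmeier-nonlinear-flow-JDE} to upgrade from a nonlinear to a linear Markov process. This matches the paper's own (implicit) argument in the paragraph preceding the corollary.
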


\begin{rem}
	\begin{enumerate}
		\item[(i)] Following the pioneering work \cite{Krylov_1973}, in \cite[Ch.12]{StroockVaradh2007}, linear Markovian selections are constructed directly for the linear martingale problem associated with the linear FPKE under the same assumptions on the coefficients as in Corollary \ref{cor:final-cor-lin-sect}. However, to us it seems that a strength of selecting on the level of the one-dimensional time marginals instead of the path laws is that, as stated in Propositions \ref{prop:linear-prop-from-Prop3.4.} and \ref{prop:linear-version-mainthm} and Corollary \ref{cor:final-cor-lin-sect}, we obtain in addition that the Markovian path laws are the unique SDE solutions with the prescribed one-dimensional time marginals given by the solution flow for the linear FPKE. To the best of our knowledge, such a uniqueness result for the members of Markovian selections has not been obtained by selecting directly on the level of the path laws.
		\item[(ii)] The connection between Markovian selections of path laws and their extremality in the convex space of all solutions to the corresponding martingale problem was studied in depth in \cite{SY80}. In particular, in Theorem 2.10. the authors prove that for time-homogeneous, continuous and bounded coefficients, the Markovian selections from \cite[Ch.12]{StroockVaradh2007} consist of extremal points of the respective sets of solutions to the linear martingale problem.
	\end{enumerate}
\end{rem}
We conclude this section by pointing out that Corollary \ref{cor:final-cor-lin-sect} applies for instance to continuity equations (i.e. linear FPKES with $a =0$) with bounded and merely continuous (in $x\in \R^d)$ vector fields $b$. It is well-known that this includes plenty of interesting ill-posed examples.

\section{Applications and examples}\label{sect:Appl}

\subsection{Well-posed nonlinear FPKEs}\label{sect:well-posed-eq}
Theorem \ref{theorem1} (more precisely, Corollary \ref{cor:alternative-form-main-thm}) particularly applies, if for any $(s,\zeta) \in \R_+\times\Pscr_0$ the nonlinear FPKE \eqref{NLFPE} has a unique solution $(\mu_t^{s,\zeta})_{t \geq s}$ from $(s,\zeta)$ such that $\mu^{s,\zeta}_t \in \Pscr_0$ for all $t \geq s$, which is also the unique solution to \eqref{LFPE}, with $\mu^{s,\zeta}$ replacing $\mu_t$, from $(s,\zeta)$. The first part is particularly true when the associated McKean-Vlasov equation \eqref{DDSDE} has a unique weak solution from $(s,\zeta)$ with one-dimensional time marginals in $\Pscr_0$. Such a uniqueness result was obtained in \cite{Funaki} for the case of Lipschitz-continuous coefficients on $\R^d\times \Pscr_p$, where for $p\geq 1$ 
$$\mathcal{P}_p := \bigg\{\zeta \in \mathcal{P}: \int_{\mathbb{R}^d}|x|^p\zeta(dx) <\infty\bigg\}$$
is equipped with the $p$-Wasserstein distance $d_{\mathbb{W}_p}$.
The following generalization to one-sided Lipschitz drift $b$ was proven in \cite{McKean-recent4}:
for $p \geq 2$, \eqref{DDSDE} has a unique weak solution $X^{s,\zeta}$ from $(s,\zeta) \in \R_+\times \Pscr_p$ with $\mu^{s,\zeta}_t:=\mathcal{L}_{X^{s,\zeta}_t} \in \Pscr_p$,  if there is a strictly positive, increasing continuous function $K$ on $\R_+$ such that for all $t \geq 0, x,y \in \R^d$ and $ \mu,\nu \in \Pscr_p$:
\begin{enumerate}
	\item [(A1)]$|\sigma(t,\mu,x)-\sigma(t,\nu,y)|^2 \leq K(t)\big(|x-y|^2+d_{\mathbb{W}_p}(\mu,\nu)^2\big)$,
	\item[(A2)] $2\langle b(t,\mu,x)-b(t,\nu,y),x-y\rangle \leq K(t)\big(|x-y|^2+d_{\mathbb{W}_p}(\mu,\nu)\big)$,
	\item[(A3)] $b$ is bounded on bounded sets in $\R_+\times \Pscr_p\times \R^d$, $b(t,\cdot, \cdot)$ is continuous on $ \Pscr_p\times \R^d$, and $$|b(t,0,\mu)|^p \leq K(t)\bigg(1+\int_{\R^d} |z|^p \,\mu(dz)\bigg).$$
\end{enumerate}
Under these assumptions $\mu^{s,\zeta}$ is also the unique solution to \eqref{LFPE} from $(s,\zeta)$, with $\mu^{s,\zeta}_t$ replacing $\mu_t$. Hence by Corollary \ref{cor:alternative-form-main-thm}, the path laws $\mathbb{P}_{s,\zeta} := \mathcal{L}_{X^{s,\zeta}}$, $(s,\zeta) \in \R_+\times \Pscr_p$, form a nonlinear Markov process with one-dimensional time marginals $\mu^{s,\zeta}_t$, and the latter uniquely determine $\mathbb{P}_{s,\zeta}$, $(s,\zeta)\in \R_+\times \mathcal{P}_p$.

Further well-posedness results for equations of type \eqref{DDSDE} were obtained in  \cite{McKean-recent5,HW20,McKean-recent1,RZ21,HW22}.
While in these situations it has to be checked separately whether also the linear equations \eqref{LFPE} are well-posed in the sense of Corollary \ref{cor:alternative-form-main-thm}, this is in general much easier than in the nonlinear case and thus often true in the above works. Then the path  laws of these unique weak solutions to \eqref{DDSDE} form a nonlinear Markov process.

\subsection{Restricted well-posed nonlinear FPKEs of Nemytskii-type}\label{subsect:examples-nemytskii}
We recall that we set $\Pscr_a := \{\mu \in \Pscr | \mu \ll dx\}$, and here we also use the notation $\Pscr_a^\infty := \{\mu \in \Pscr_a| \frac{d\mu}{dx} \in L^\infty(\R^d)\}$.

Without sufficient regularity in $(x,\mu) \in \R^d\times \Pscr$, e.g. Lipschitz-continuity with respect to a Wasserstein distance, or non-degeneracy for the diffusion coefficient, well-posedness for nonlinear (in fact, even for linear) FPKEs usually fails, but one might still be able to prove uniqueness in a subclass of solutions. Here we present such examples, giving rise to nonlinear Markov processes in the case of singular \textit{Nemytskii-type} coefficients, i.e. for $\mu \in \Pscr_a$ with $\mu = \frac{d\mu}{dx} dx$
\begin{equation}\label{Nemytskii-type coeff.}
	b(t,\mu,x) := \tilde{b}\bigg(t,\frac{d\mu}{dx}(x),x\bigg),\quad a(t,\mu,x) := \tilde{a}\bigg(t,\frac{d\mu}{dx}(x),x\bigg)
\end{equation}
for Borel coefficients $\tilde{b},\tilde{a}$ on $\R_+\times \R \times \R^d$ with values in $\R^d$ and $\R^{d\times d}$, respectively. Without further mentioning, we always consider the version of $\frac{d\mu}{dx}$ which is $0$ on those $x \in \R^d$ for which $\lim_{r \to 0}dx(B_r(0))^{-1}\mu(B_r(x))$ does not exist in $\R$ (here $B_r(0)$ denotes the Euclidean ball of radius $r >0$ centered at $x$). By Lebesgue's differentiation theorem, the set of such $x$ is a $dx$-zero set. Since $(\mu,y)\mapsto \frac{d\mu}{dx}(y)$ is $\mathcal{B}(\Pscr_a)\otimes \Bscr(\R^d)$-measurable by \cite[Sect.4.2.]{Grube-thesis}, it follows that $b$ and $a$ are product Borel measurable on $\R_+\times \Pscr_a\times \R^d$ for the weak topology on $\Pscr_a$.

\begin{enumerate}
	\item [(i)] \textbf{Generalized porous media equation.}
	Let 
	\begin{equation}\label{GPME-coeff}
		\tilde{b}(t,z,x) := b_0(z)D(x), \quad\tilde{a}(t,z,x):= \frac{\beta(z)}{z}\Id
	\end{equation}
	for Borel maps $b_0,\beta: \R \to \R$ and $D: \R^d \to \R$,
	where $\Id$ denotes the $d\times d$-identity matrix. In this case, \eqref{NLFPE} is a generalized porous media equation for the densities $u_t(\cdot) = \frac{d\mu_t}{dx}(\cdot)$, namely
	\begin{equation}\label{GPME}\tag{GPME}
		\partial_t u = \Delta \beta(u)-\divv\big(Db_0(u)u\big).
	\end{equation}
	A (distributional probability) solution to \eqref{GPME} from $(s,\zeta)\in \R_+\times \Pscr$ is a map $u: (s,\infty) \to \Pscr_a$ such that $t\mapsto u_t\,dx =:\mu_t$ is a weakly continuous solution to \eqref{NLFPE} in the sense of Definition \ref{def:all-single-eq.} (v) with $\lim_{t \to s}u_t\,dx = \zeta$ in the weak topology on $\Pscr$.
	Suppose that the following assumptions are satisfied.
	\begin{enumerate}
		\item [(B1)] $\beta(0)=0, \beta \in C^2(\R), \beta'\geq0$.
		\item[(B2)]  $b_0 \in C^1(\R)\cap C_b(\R), b\geq 0$.
		\item[(B3)] $D \in L^\infty(\R^d;\R^d), \divv D \in L^2_{\loc}(\R^d), (\divv D)^- \in L^\infty(\R^d)$.
		\item[(B4)] $\forall K \subset \R$ compact: $\exists\, \alpha_K >0$ with $|b_0(r)r-b_0(s)s| \leq \alpha_K |\beta(r)-\beta(s)|$ $\forall r,s \in K$.
	\end{enumerate}
	Then by \cite[Thm.2.2]{NLFPK-DDSDE5} for $\zeta \in \Pscr_0 := \Pscr_a^\infty$, there is a (distributional weakly continuous probability) solution $u^{s,\zeta}$ to \eqref{GPME} from $(s,\zeta)$ such that $u^{s,\zeta} \in \bigcap_{T>s}L^\infty((s,T)\times \R^d)$, $u^{s,\zeta}_t \in \Pscr_0$ for all $t \geq s$, and $\{u^{s,\zeta}\}_{(s,\zeta) \in \R_+\times \Pscr_0}$ is a solution flow (we identify $u^{s,\zeta}$ with $(\mu^{s,\zeta}_t)_{t \geq s}$, $\mu^{s,\zeta}_t := u^{s,\zeta}(t,x)dx$). Moreover, by \cite[Cor.4.2]{BR22}, the linearized equation \eqref{LFPE} associated with \eqref{GPME} with initial datum $(s,\zeta)$, with $\mu^{s,\zeta}_t$ replacing $\mu_t$, has $\mu^{s,\zeta}$ as its unique (weakly continuous probability) solution in $\bigcap_{T>s}L^\infty((s,T)\times \R^d)$.
	
	Therefore, Corollary \ref{cor:alternative-form-main-thm} applies and gives the existence of a nonlinear Markov process $(\mathbb{P}_{s,\zeta})_{(s,\zeta)\in \R_+\times \Pscr_a^\infty}$ with one-dimensional time marginals $\mathbb{P}_{s,\zeta}\circ (\pi^s_t)^{-1} = u^{s,\zeta}_t\,dx$, which is uniquely determined by the solution flow. More precisely, since by \cite[Cor.3.4]{BR22}, $u^{s,\zeta}$ is the unique (distributional weakly continuous probability) solution to \eqref{GPME} from $(s,\zeta)$ in $\bigcap_{T>s}L^\infty((s,T)\times \R^d)$, it follows from Theorem \ref{aux-prop-ex+uniqu} and Lemma \ref{lem:Michael-observation} that $\mathbb{P}_{s,\zeta}$ is the path law of the unique weak solution $X^{s,\zeta}$ to the corresponding McKean-Vlasov equation
	\begin{equation}\label{DDSDE-Nemytskii-special-case}
		dX_t = b_0\big(u_t(X_t)\big)D(X_t))dt + \sqrt{\frac{2\beta(u_t(X_t))}{u_t(X_t)}}dB_t,\,\,\, \mathcal{L}_{X_t}(dx) = u_t\,dx,\,\,\, t\geq s,\,\,\, \mathcal{L}_{X_s} = \zeta,
	\end{equation}
	under the constraint $u \in \bigcap_{T>s}L^\infty((s,T)\times \R^d)$. We stress that well-posedness for \eqref{GPME} or its linearized equations in a larger class than $\bigcap_{T>s}L^\infty((s,T)\times \R^d)$ is not known.
	
	The special case $\gamma_1 \leq \beta' \leq \gamma_2$ for $\gamma_i >0$ (which already implies (B4)) and $D = -\nabla \Phi$ for
	$$\Phi \in C^1(\R^d),\, \Phi \geq 1, \,\lim_{|x|\to \infty}\Phi(x) = \infty,\, \Phi^{-m} \in L^1(\R^d)\text{ for some }m \geq 2,$$
	called \textit{nonlinear distorted Brownian motion (NLDBM)}, was already studied in \cite{RRW20}.
	\\
	\\
	\textbf{Degenerate initial data.} 
	We continue this example with cases where the set of admissible initial data $\mathcal{P}_0$ includes singular measures, e.g. the Dirac measures. For such initial data, (restricted) uniqueness of solutions to \eqref{NLFPE} and \eqref{LFPE} is difficult to obtain, in particular in the singular Nemytskii-case (although there are positive results for pure-diffusion equations, cf. \cite{Pierre82}). However, if solutions started from singular measures are more regular after any strictly positive time, one can still obtain the nonlinear Markov property without any uniqueness for singular initial data.
	Let $b,a$ be as in \eqref{Nemytskii-type coeff.},\eqref{GPME-coeff}, and assume the following:
	\begin{enumerate}
		\item [(C1)] Assumptions (B1)-(B4) hold. In addition assume:
		\item[(C2)] $\beta'(r) \geq a|r|^{\alpha-1}$  and $|\beta(r)|\leq C|r|^\alpha$ for some $a,C >0$ and $\alpha \geq 1$.
		\item[(C3)] $D \in L^\infty\cap L^2(\R^d;\R^d)$, $\divv D \in L^2(\R^d)$, $\divv D \geq 0.$
	\end{enumerate}
	Then by \cite[Thm.5.2]{NLFPK-DDSDE5}, for each $(s,\zeta) \in \R_+ \times \Pscr$, there is a (weakly continuous probability) solution $u^{s,\zeta}$ to \eqref{GPME} from $(s,\zeta)$ such that $u^{s,\zeta} \in \bigcap_{r>s} L^\infty((r,\infty)\times \R^d)$. If $\zeta \in \Pscr_a^\infty$, even $u^{s,\zeta} \in L^\infty((s,\infty)\times \R^d)$ and in this case, since (B1)-(B4) hold, it follows as in the first part of this example that $u^{s,\zeta}$ is the unique weakly continuous distributional probability solution to \eqref{GPME} from $(s,\zeta)$ in $\bigcap_{T>s}L^\infty((s,T)\times \R^d)$. Consequently, since for any $s <r$ and $\zeta \in \Pscr$, both $(u^{s,\zeta}_t)_{t \geq r}$ and $(u^{r,u^{s,\zeta}_rdx}_t)_{t \geq r}$ solve \eqref{GPME} from $(r,u^{s,\zeta}_rdx)$ and belong to $\bigcap_{T>r}L^\infty((r,T)\times \R^d)$, it follows that $\{\mu^{s,\zeta}\}_{(s,\zeta) \in \R_+\times \Pscr}$, $\mu^{s,\zeta}_t := u^{s,\zeta}_tdx$, is a solution flow (in contrast to the first part of this example, this does not follow directly from the construction of $\mu^{s,\zeta}$ in \cite{NLFPK-DDSDE5}). Since for $s<r$ and $\zeta \in \Pscr$ we have $(u^{s,\zeta}_t)_{t \geq r} \in L^\infty((r,\infty)\times \R^d)$, it follows as in the first part of this example that the corresponding linearized equation with initial datum $(r,\mu^{s,\zeta}_r)$ and with $\mu^{s,\zeta}_t$ replacing $\mu_t$ has $\mu^{s,\zeta} = \mu^{r,\mu^{s,\zeta}_r}$ as its unique solution in $L^\infty((r,\infty)\times \R^d)$.
	
	Hence by Lemma \ref{lem:Michael-observation} and Corollary \ref{cor:after-main-thm}, with $\mathfrak{P}_0 = \Pscr_a^\infty$ and $\Pscr_0 = \Pscr$, there is a nonlinear Markov process $(\mathbb{P}_{s,\zeta})_{(s,\zeta) \in \R_+\times \Pscr}$ with one-dimensional time marginals $\mathbb{P}_{s,\zeta}\circ( \pi^s_t)^{-1} = \mu^{s,\zeta}_t$, consisting of path laws of weak solutions to \eqref{DDSDE-Nemytskii-special-case}. For  $\zeta \in \Pscr_a^\infty$, $\mathbb{P}_{s,\zeta}$ is the path law of the unique weak solution to \eqref{DDSDE-Nemytskii-special-case} with one-dimensional time marginals in $\bigcap_{T>s}L^\infty((s,T)\times \R^d)$ (as in the first part of this example).
	
	\item[(ii)] \textbf{Burgers' equation.}
	Consider Burgers' equation in $\R^1$, i.e. 
	\begin{equation*}
		\partial_t u = \frac{\partial^2u}{\partial^2x} -u \frac{\partial u}{\partial x},
	\end{equation*}
	which is one of the key examples McKean hinted at in his seminal work \cite{McKean1-classical}. Here we put forward his program by realizing classical solutions to Burgers' equation as one-dimensional time marginals of a nonlinear Markov process.
	Since we restrict attention to smooth pointwise solutions, we consider the equivalent equation
	\begin{equation}\label{eq:Burgers}\tag{BE}
		\partial_t u = \frac{\partial^2 u}{\partial^2 x} -\frac{1}{2}\frac{\partial u^2}{\partial x}
	\end{equation}
	instead, which is a nonlinear FPKE of Nemytskii-type with coefficients 
	\begin{equation}\label{coefficients-Burgers}
		b(t,\mu,x) :=\tilde{b}\bigg( \frac{d\mu}{dx}(x)\bigg) :=  \frac 1 2 \frac{d\mu}{dx}(x)\text{ and }a(t,\mu,x) := 1.
	\end{equation}
	For $(s,\zeta) \in \R_+\times \Pscr_a^\infty$, \eqref{eq:Burgers} has a unique smooth pointwise (i.e. strong) solution $u^{s,\zeta}$ on $(s,\infty)$ such that $u^{s,\zeta}_t  \to \zeta$ in $L^1(dx$) as $t \to s$, see for instance \cite{GR91-for-Burgers}. Furthermore, $0\leq u^{s,\zeta}_t \leq |\zeta|_{\infty}$ and 
	$$\int_{\R} u^{s,\zeta}_t(x)\,dx = \int_{\R} \zeta(x) \, dx$$
	for all $t \geq s$. Hence, $u^{s,\zeta}$ is in particular a distributional probability solution with initial datum $(s,\zeta)$ (as introduced in the first part of (i) above), and its uniqueness in the class of pointwise solutions implies that $\{u^{s,\zeta}\}_{(s,\zeta)\in \R_+\times \Pscr_a^\infty}$ is a solution flow for \eqref{eq:Burgers}. Since $u^{s,\zeta} \in L^\infty((s,\infty)\times \R)\cap L^1((s,T)\times \R)$ for all $T>s$, the superposition principle \cite[Thm.2.5]{Trevisan16} applies, i.e. there is a probabilistic weak solution $X^{s,\zeta}$ on $(s,\infty)$ to
	\begin{equation}\label{eq:stoch-Burgers}
		dX_t = \frac 1 2 u_t(X_t)dt+dB_t, \quad \mathcal{L}_{X_t}(dx) = u_t(x)\,dx,\quad  \mathcal{L}_{X_s} = \zeta,
	\end{equation}
	with $u = u^{s,\zeta}$.
	Since $u^{s,\zeta} \in L^\infty((s,\infty)\times \R)$, by \cite[Cor.4.2]{BR22} the linearized equation
	\begin{equation*}
		\partial_t v = \frac{\partial^2 v}{\partial^2 x}-\frac 1 2 \frac{\partial(u^{s,\zeta}v)}{\partial x},\quad t \geq s,\quad v_s = \zeta
	\end{equation*}
	has $v = u^{s,\zeta}$ as its unique weakly continuous probability solution in $\bigcap_{T >s}L^\infty((s,T)\times \R)$ for each $(s,\zeta) \in \R_+\times \Pscr_a^\infty$. More precisely, to see this one replaces $\tilde{b}$ from \eqref{coefficients-Burgers} by a bounded $C^1$ map which coincides with $\tilde{b}$ on $[0,\sup_{t\geq s,x\in \R^d}u^{s,\zeta}_t(x)]$. 
	
	Consequently, by Corollary \ref{cor:alternative-form-main-thm}, the path laws $\mathbb{P}_{s,\zeta}$ of the unique solutions $X^{s,\zeta}$, $(s,\zeta) \in \R_+\times \Pscr_a^\infty$, to \eqref{eq:stoch-Burgers} from $(s,\zeta)$ with time marginal densities in $\bigcap_{T>s}L^\infty((s,T)\times \R)$ form a uniquely determined nonlinear Markov process.
	\item[(iii)] \textbf{Distribution-dependent stochastic equations with Lévy noise.} For $\alpha\in (\frac 1 2,1)$ and $\zeta \in \Pscr_a^\infty$, consider the Cauchy problem for the fractional generalized porous media equation
	\begin{equation}\label{frac-GPME}
		\partial_t u = (-\Delta)^\alpha\beta(u)-\divv\big(Db(u)u\big),\quad t \geq s, \quad u_s(x)dx = \zeta(dx).
	\end{equation}
	Since the operator $(-\Delta)^\alpha$ is nonlocal, such equations are related to nonlocal Kolmogorov operators, see \cite{BR22-frac}. Associated with this non-local equation is the distribution-dependent SDE 
	\begin{equation}\label{DDSDE-frac}
		dX_t = b_0\big(u_t(X_t)\big)D(X_t)dt + \bigg(\frac{\beta(u_t(X_{t-}))}{u_t(X_{t-})}\bigg)^{\frac{1}{2\alpha}}dL_t,\quad \mathcal{L}_{X_t}(dx) = u_t(x)dx,\quad t\geq s,
	\end{equation}
	where $L$ is a $d$-dimensional isotropic $2s$-stable process with Lévy measure $dz/|z|^{d+2\alpha}$. A probabilistic weak solution $X$ to \eqref{DDSDE-frac} is defined as in Def.\eqref{def:all-single-eq.} (i) with $L$ instead of $B$, see \cite{BR22-frac} for details.
	Suppose the following assumptions hold.
	\begin{enumerate}
		\item[(D1)] $\beta \in C^\infty(\R)$, $\beta' >0$, $\beta(0) = 0$.
		\item[(D2)] $D \in L^\infty(\R^d;\R^d)$, $\divv D \in L^2_{\loc}(\R^d)$, $(\divv D)^- \in L^\infty(\R^d)$.
		\item[(D3)] $b_0 \in C_b(\R)\cap C^1(\R)$, $b \geq 0$.
	\end{enumerate}
	Then, by \cite[Thm.2.4,Thm.3.1]{BR22-frac}, for $(s,\zeta) \in \R_+ \times \Pscr_a^\infty$ there is a weakly continuous distributional solution $t \mapsto u^{s,\zeta}_t \in \Pscr_a^\infty$ to \eqref{frac-GPME} from $(s,\zeta)$ with the following properties. $\{u^{s,\zeta}\}_{(s,\zeta) \in \R_+\times \Pscr_a^\infty}$ is a solution flow in $\Pscr_0 =\Pscr_a^\infty$, and $u^{s,\zeta}$ is the unique weakly continuous probability solution to \eqref{frac-GPME} in $\bigcap_{T>s}L^\infty((s,T)\times \R^d)$. Moreover, by \cite[Thm.3.2]{BR22-frac}, it is also the unique solution to the linearized equation
	\begin{equation*}
		\partial_t v = (-\Delta)^\alpha\bigg(\frac{\beta(u^{s,\zeta})}{u^{s,\zeta}}v\bigg)-\divv\big(Db(u^{s,\zeta})v\big),\quad t\geq s, \quad v(s,\cdot) = \zeta,
	\end{equation*}
	in $\bigcap_{T>s}L^\infty((s,T)\times \R^d)$. Consequently, by a nonlocal version of the superposition principle from  \cite{SPpr-nonloc}, Corollary \ref{cor:alternative-form-main-thm} also applies in this nonlocal case and implies the existence of a nonlinear Markov process $(\mathbb{P}_{s,\zeta})_{(s,\zeta) \in \R_+ \times\Pscr_a^\infty}$, where $\mathbb{P}_{s,\zeta}$ is the path law of a càdlàg weak solution $X^{s,\zeta}$ to \eqref{DDSDE-frac} with one-dimensional time marginals $u_t^{s,\zeta}(x)dx$. By \cite[Thm.4.2]{BR22-frac}, $X^{s,\zeta}$ is unique under the constraint $u \in \bigcap_{T>s}L^\infty((s,T)\times \R^d)$. The nonlinear Markov property of the solutions to \eqref{DDSDE-frac} was already hinted at in \cite[Remarks 4.3,4.4]{BR22-frac} and is now finally indeed proved by the above.
	\item[(iv)] \textbf{Barenblatt solutions to the classical PME.} For the case of the classical porous media equation
	\begin{equation*}
		\partial_t u = \Delta\big(|u|^{m-1}u\big), \quad m \geq1,
	\end{equation*}
	it was shown in \cite{Pierre82} that for any initial datum  $(s,\zeta) \in \R_+\times \Pscr$, there is a unique weakly continuous distributional probability solution $u^{s,\zeta}$ in $\bigcap_{\tau > s, T>\tau}L^\infty((\tau,T)\times \R^d)$. In fact, it is shown that $u^{s,\zeta}$ is even $L^1$-continuous on $(s,\infty)$. Clearly, the uniqueness implies the flow property of the curves $t\mapsto u_t^{s,\zeta}(x)dx$. For $\zeta = \delta_{x_0}$, $u^{s,\zeta}$ is the \textit{Barenblatt solution} (see \cite{V07}), given by
	$$u_t^{s,\delta_{x_0}}(x) = (t-s)^{-\alpha}\bigg[\big(C-k|x-x_0|^2(t-s)^{-2\beta}\big)^+\bigg]^\frac
	{1}{m-1},\quad t >s, $$
	where $\alpha = \frac{d}{d(m-1)+2}, \beta = \frac \alpha d, k = \frac{\alpha(m-1)}{2md}$, $f^+ := f\vee 0$, and $C=C(m,d)>0$ is chosen such that $\int_{\R^d} u_t^{s,\zeta}(x) dx = 1$ for all $t >s$. The corresponding McKean-Vlasov equation is
	\begin{equation}\label{DDSDE-PME}
		dX_t = \sqrt{2u_t(X_t)^{m-1}}dB_t, \quad \mathcal{L}_{X_t} (dx)=u_t(x)dx, \quad t \geq s,\quad  \mathcal{L}_{X_s} = \zeta.
	\end{equation}
	
	By Lemma \ref{lem:Michael-observation}, the results of the second part of (i) above and Corollary \ref{cor:after-main-thm} with $\mathfrak{P}_0 = \Pscr_a^\infty$ and $\Pscr_0 = \Pscr$, there is a nonlinear Markov process $(\mathbb{P}_{s,\zeta})_{(s,\zeta) \in \R_+\times \Pscr}$ consisting of path laws $\mathbb{P}_{s,\zeta}$ of weak solutions to \eqref{DDSDE-PME} with one-dimensional time marginals given by $u_t^{s,\zeta}(x)dx$. For  $\zeta \in \Pscr_a^\infty$, $\mathbb{P}_{s,\zeta}$ is the path law of the unique weak solution to \eqref{DDSDE-PME} with one-dimensional time marginals in $\bigcap_{\tau>s, T>\tau}L^\infty((\tau,T)\times \R^d)$. This way, the Barenblatt solutions have a probabilistic interpretation as time marginal densities of a nonlinear Markov process. In the special case $d=1$, the identification of the Barenblatt solutions as one-dimensional time marginals of a stochastic process solving \eqref{DDSDE-PME} was already obtained in \cite{BCRV96}, but no connection to any kind of nonlinear Markov property was drawn.
	
	\item[(v)] \textbf{$2D$ vorticity Navier--Stokes equations.}
	Very recently, in \cite{BRZ23}, it was proven that Theorem \ref{theorem1} applies to the $2D$ vorticity Navier--Stokes equations, i.e. to 
	\begin{equation}\label{2DNSEvort}
		\partial_t u = \nu \Delta u - \divv(v u), \quad (t,x)\in \R_+\times \R^2,
	\end{equation}
	which is of type \eqref{NLFPE}, with $\Pscr_0 = \Pscr_a \cap L^4(\R^2)$. Here $\nu >0$ and $u = \curl v$. For details, including the McKean--Vlasov equation associated with \eqref{2DNSEvort}, we refer to \cite{BRZ23}.
	
	\item[(vi)] \textbf{Parabolic $p$-Laplace equation.}
	In the very recent paper \cite{BRR24-pLaplace}, the $p$-Laplace equation
	\begin{equation}\label{pLaplace}
		\partial_t u = \divv\big(|\nabla u|^{p-2} \nabla u\big),\quad (t,x)\in \R_+\times \R^d,
	\end{equation}
was reinterpreted as a nonlinear FPKE of type \eqref{intro:NLFPKE-Nemytskii} and it was proven there that our theory, more precisely Corollary \ref{cor:after-main-thm}, applies to the explicit fundamental solution (also called \textit{Barenblatt solution}) of \eqref{pLaplace}. This way, a uniquely determined nonlinear Markov process, consisting of solution path laws to the associated DDSDE and with one-dimensional time marginal densities given by this fundamental solution, was constructed. Due to its analogy to the case $p=2$, which is the classical heat equation and Brownian motion, in \cite{BRR24-pLaplace} this nonlinear Markov process was called $p$\textit{-Brownian motion}.
	
	%\textcolor{red}{2)} By an argument as in \cite[Thm.4.3]{BR22} it follows that weak solutions $X^{s,\zeta}$ to this SDE are unique under the constraint $u(t,x) \in \bigcap_{\tau > s, T>\tau}L^\infty((\tau,T)\times \R^d)$.  Hence the path law $\mathbb{P}_{s,\zeta}$ of $X^{s,\zeta}$ has one-dimensional time marginals $u^{s,\zeta}(t,\cdot)dx$, and by Theorem \ref{theorem1}, the family $(\mathbb{P}_{s,\zeta})_{(s,\zeta) \in \R_+\times \Pscr}$ is a nonlinear Markov process. 
	%This way, the Barenblatt solutions to the porous media equation are identified as the one-dimensional time marginals of a nonlinear Markov process, which consists of the unique solutions to \eqref{DDSDE-PME} with one-dimensional time marginals in $\bigcap_{\tau > s, T>\tau}L^\infty((\tau,T)\times \R^d)$. In the special case $d=1$, the identification of the Barenblatt solutions as one-dimensional time marginals of a stochastic process solving \eqref{DDSDE-PME} was already obtained in \cite{BCRV96}, but no connection to any kind of nonlinear Markov property was drawn.

\end{enumerate}
\begin{rem}\label{rem:final-rem}
	With regard to Remark \ref{rem:nonlinear-feature} (i) we stress that in all the above examples, even in those where $\Pscr_0 = \Pscr$, it is not true that $p_{(s,\zeta),(r,y)} = \mathbb{P}_{r,\delta_y}$, since for the nonlinear equation \eqref{NLFPE} solutions are not stable under convex combinations in the initial datum. Hence, unlike for classical linear cases, for the nonlinear Markov processes from the previous examples one cannot expect to calculate the path laws $\mathbb{P}_{s,\zeta}$ via the one-dimensional time marginals $\mathbb{P}_{r,y}\circ (\pi^s_r)^{-1}$, $s\leq r$, $y \in \R^d$. Instead, one has the formula given in Proposition \ref{prop:Markov-distr-from-marginals}.
\end{rem}

\subsection{Ill-posed nonlinear FPKEs}\label{subsect:il-posed-appl}
Finally we point out again that the results from Section \ref{subsect:main-result} apply when no uniqueness for the nonlinear FPKE is known at all, but one can select a solution flow $\{\mu^{s,\zeta}\}_{(s,\zeta) \in \R_+\times \Pscr_0}$ for \eqref{NLFPE}, and if, additionally, one can establish the required restricted uniqueness for the linearized equations \eqref{LFPE}, with $\mu^{s,\zeta}_t$ replacing $\mu_t$, as formulated in Corollary \ref{cor:alternative-form-main-thm}. Solution flows for ill-posed nonlinear FPKEs have been constructed in \cite{Rehmeier-nonlinear-flow-JDE}. Since, in principle, (restricted) uniqueness for nonlinear equations is much harder to obtain than the mild restricted uniqueness condition for \eqref{LFPE}, it is often possible to construct nonlinear Markov processes in such nonlinear completely ill-posed cases, if one can select a solution flow. 

Let us give the following particular example in dimension $d=1$. In \cite{Scheutzow87}, it is shown that \eqref{DDSDE} with coefficients
\begin{equation*}
	\sigma = 0 , \quad b(t,\mu,x) = \int_{\R^d} h(y)\,\mu(dy)
\end{equation*}
has more than one solution for a specifically constructed function $h \in C_c(\R)$ and a suitable initial datum $\zeta \in \Pscr$. Consequently, since the solutions presented in \cite{Scheutzow87} have distinct one-dimensional time marginals, the corresponding nonlinear FPKE \eqref{NLFPE} with initial datum $\zeta$ is ill-posed as well. On the other hand, clearly Assumption B2 from \cite{Rehmeier-nonlinear-flow-JDE} holds for $\sigma$ and $b$ as above, and, therefore, by \cite[Prop.4.11, Rem.4.12]{Rehmeier-nonlinear-flow-JDE} there is a solution flow $\{\mu^{s,\zeta}\}_{(s,\zeta) \in \R_+\times \Pscr}$ for \eqref{NLFPE} with $\Pscr_0 = \Pscr$. Moreover, for each $(s,\zeta) \in \R_+\times \Pscr$, the corresponding linearized equation \eqref{LFPE}, with $\mu^{s,\zeta}_t$ in place of $\mu_t$, which is a continuity equation with bounded vector field only depending continuously on $t \geq s$, has $\mu^{s,\zeta}$ as its unique solution (in the sense of Definition \ref{def:all-single-eq.}). Consequently, by Corollary \ref{cor:alternative-form-main-thm}, there exists a nonlinear Markov process $(\mathbb{P}_{s,\zeta})_{(s,\zeta) \in \R_+\times \Pscr}$ such that the one-dimensional time marginals of $\mathbb{P}_{s,\zeta}$ are given by $(\mu^{s,\zeta}_t)_{t \geq s}$, and each $\mathbb{P}_{s,\zeta}$ is the path law of the unique weak solution of the corresponding DDSDE with these one-dimensional time marginals. It is straightforward to construct further examples of this prototype, e.g. with non-zero diffusion term and space-dependent drift $b$. 

\paragraph{Acknowledgement.} Funded by the Deutsche Forschungsgemeinschaft (DFG, German Research
Foundation) – Project-ID 317210226 – SFB 1283.
\bibliography{bib-collection}
\end{document}